\documentclass{amsart}
\usepackage{graphicx}
\usepackage{amsmath}
\usepackage{amsthm}
\usepackage{amssymb}
\usepackage{tikz}
\usepackage[textwidth=3.5cm]{todonotes}

\newtheorem{thm}{Theorem}
\newtheorem{prop}[thm]{Proposition}
\newtheorem{lem}[thm]{Lemma}
\newtheorem{cor}[thm]{Corollary}

\newtheorem*{thm*}{Theorem}
\newtheorem*{lem*}{Lemma}

\theoremstyle{definition}
\newtheorem{defn}[thm]{Definition}
\newtheorem{rem}[thm]{Remark}
\newtheorem{ex}[thm]{Example}

\def\PA{\mathsf{PA}}

\def\ACA{\mathsf{ACA}}
\def\CT{\mathsf{CT}}
\def\ctm{\CT^-}
\def\csm{\mathsf{CS}^-}
\def\lpa{{\mathcal L}_{\PA}}

\newcommand{\anglebracket}[1]{\langle #1 \rangle}

\newcommand{\mc}[1]{\mathcal{#1}}

\newcommand{\restr}[1]{{\upharpoonright_{#1}}}

\DeclareMathOperator{\Def}{Def}

\DeclareMathOperator{\SSy}{SSy}

\DeclareMathOperator{\Cl}{Cl}

\DeclareMathOperator{\e}{\subseteq_\text{end}}

\DeclareMathOperator{\SInd}{SInd}
\DeclareMathOperator{\DC}{DC}
\DeclareMathOperator{\DNC}{DNC}
\DeclareMathOperator{\QC}{QC}

\DeclareMathOperator{\IDC}{IDC}
\DeclareMathOperator{\Sent}{Sent}
\DeclareMathOperator{\Form}{Form}
\DeclareMathOperator{\Prov}{Prov}
\DeclareMathOperator{\Comp}{Comp}

\DeclareMathOperator{\rk}{rk}
\DeclareMathOperator{\len}{len}
\DeclareMathOperator{\dc}{DC}

\begin{document}

\title{Pathologies in satisfaction classes}
\author{Athar Abdul-Quader}
\address{School of Natural and Social Sciences, 
Purchase College, SUNY}
\email{athar.abdulquader@purchase.edu}
\author{Mateusz {\L}e{\l}yk}
\address{Faculty of Philosophy, University of Warsaw}
\email{mlelyk@uw.edu.pl}

\begin{abstract}

We study subsets of countable recursively saturated models of $\PA$ which can be defined using pathologies in satisfaction classes. More precisely, we characterize those subsets $X$ such that there is a satisfaction class $S$ where $S$ behaves correctly on an idempotent disjunction of length $c$ if and only if $c \in X$. We generalize this result to characterize several types of pathologies including double negations, blocks of extraneous quantifiers, and binary disjunctions and conjunctions. We find a surprising relationship between the cuts which can be defined in this way and arithmetic saturation: namely, a countable nonstandard model is arithmetically saturated if and only if every cut can be the ``idempotent disjunctively correct cut" in some satisfaction class. We describe the relationship between types of pathologies and the closure properties of the cuts defined by these pathologies. 
\end{abstract}

\maketitle

{\Small \noindent \textbf{Keywords}: Nonstandard models of Peano Arithmetic, Satisfaction classes, Recursive saturation, Arithmetical saturation, Disjunctive correctness.}

{\Small \noindent \textbf{2020 \textit{Mathematics Subject Classification}}: 03C62, 03H15}

\section{Introduction}
Kotlarski-Krajewski-Lachlan famously showed \cite{kkl} that every countable, recursively saturated model of $\PA$ has a full satisfaction class. Enayat-Visser \cite{enayat-visser} strengthened this result using more typically model-theoretic tools. These results show the conservativity of the theory $\ctm$ of compositional truth over the base arithmetic theory $\PA$. Both proofs illustrate the weakness of $\ctm$: not only the theory is a conservative extension of the base theory $\PA$, but also it is consistent with failure of some very basic truth principles such as \emph{disjunctive correctness} ($\DC$): ``A disjunction is true if and only if it has a true disjunct". In particular one can construct models of $\ctm$ in which for a nonstandard number $a$ the disjunction
\[\underbrace{0\neq 0 \vee 0\neq 0 \vee \ldots \vee 0\neq 0}_{a \textnormal{ times }}\]
is within the scope of the truth predicate. Thus it is well known how to construct \emph{pathological} satisfaction classes.

One can easily exclude such pathological behaviour by adding to the theory $\ctm$ induction axioms for the extended language. It is well known that the theory $\CT$ of an \emph{inductive} truth predicate is not conservative over $\PA$; indeed, $\CT$ proves the Global Reflection Principle for $\PA$, that is the statement
\begin{equation}\label{grp}\tag{\textnormal{GRP}}
\forall \phi \bigl(\Prov_{\PA}(\phi)\rightarrow T(\phi)\bigr).
\end{equation}
In fact, $\CT_0$, the theory $\ctm$ augmented by $\Delta_0$-induction for formulas in the language including the truth predicate, is equivalent to \eqref{grp}. 

Recent work by Enayat and Pakhomov \cite{enayat-pakhomov} pointed to a deeper connection between non-conservativity and disjunctive correctness. 
The natural-looking extension of $\ctm$ with $\DC$ turns out to be equivalent to $\CT_0$. Ali Enayat (unpublished) separated $\DC$ into two principles: $\DC$-out, stating that every true disjunction has a true disjunct, and $\DC$-in, stating that a disjunction with a true disjunct is true. Cie{\'s}li{\'n}ski, {\L}e{\l}yk, and Wcis{\l}o \cite{cieslinski-lelyk-wcislo-dc} show that already $\ctm + \DC$-out is equivalent to $\CT_0$, while $\ctm + \DC$-in is conservative over $\PA$. Conservativity of $\DC$-in is shown by proving that every countable model of $\PA$ has an elementary extension which is ``disjunctively trivial": that is, one in which every disjunction of nonstandard length is evaluated as true. In such disjunctively trivial models of $\ctm$, $\omega$ is definable as the cut for which the truth predicate $T$ is ``disjunctively correct." 

In this article, we aim at deepening our understanding of the phenomenon of disjunctive correctness: we consider related questions around which sets can be definable by exploiting pathologies in the satisfaction class. We analyze ``local pathologies", along the lines of repeated (idempotent) disjunctions of a single, fixed sentence $\theta$, and non-local pathologies, where, for example, we consider idempotent disjunctions of all sentences. We completely classify the subsets of a model which are definable using local pathologies, and use this to conclude that a countable model of $\PA$ is arithmetically saturated if and only if it carries a satisfaction class which makes all disjunctions of nonstandard length true. We also classify the cuts in a model which can be definable using non-local pathologies.

From the definability perspective, our work complements that of \cite{wcislo_satisfaction_definability}, where it was shown that for every subset $A$ of a countable recursively saturated model $\mc{M}$ there is a satisfaction class $S$ such that $A$ is definable in $(\mc{M}, S)$ as (roughly speaking) the set of those numbers $x$ such that quantifier correctness fails on the $x$-th formula (in a  suitably chosen enumeration). We go in the reverse direction: starting from an idempotent sentential operation $F$ we ask when a set $A$ can be characterized as the set of those numbers $x$ for which the satisfaction class behaves correctly when $F$ is iterated $x$-times. Unlike in the case of \cite{wcislo_satisfaction_definability} it turns out that in some countable recursively saturated models, not every cut can be defined in this way. 

We conclude the paper with several properties about the full disjunctively correct cut.

\subsection{Preliminaries}

We formulate $\PA$ in the usual language $\lpa = \{ +, \times, <, 0, 1 \}$. We use script letters $\mc{M}, \mc{N}$, etc to denote models of $\PA$ and Roman letters $M, N,$ etc to denote their universes. $\textnormal{ElDiag}(\mc{M})$ denotes the elementary diagram of the model $\mc{M}$. We follow standard definitions and conventions used in the study of models of $\PA$: see \cite[Chapter 1]{ks}. We recall some of these conventions here. 

We fix standard coding for finite sets and sequences: for a model $\mc{M} \models \PA$, $a, b \in M$,
\begin{itemize}
\item $\len(a)$ denotes the length of the sequence coded by $a$,
\item $(a)_b$ denotes the $b$-th element of the sequence coded by $a$, and
\item we write $a \in b$ if $a$ is in the set coded by $b$.
\end{itemize}

\begin{defn}
A model $\mc{M}\models \PA$ is arithmetically saturated iff for every $a\in M$ for every type $p(x,a)$ which is arithmetically definable in the type of $a$, $p(x,a)$ is realized in $\mc{M}$.
\end{defn}

We note for the reader the equivalence between \emph{countable recursively saturated models} and \emph{countable resplendent models}, as well as the equivalence between \emph{arithmetically saturated models} and recursively saturated models in which $\omega$ is a strong cut. The interested reader is again directed to \cite{ks} for definitions and other references.

Let $\mc{M} \models \PA$. By $\Form^{\mc{M}}$ and $\Sent^{\mc{M}}$ we refer to the (definable) sets of (G\"{o}del codes of) formulas and sentences, respectively, in the sense of $\mc{M}$. For the rest of this article, we will not distinguish between a formula $\phi$ and its G\"{o}del code $\lceil \phi \rceil$. We use the following standard abbreviations:
 
\begin{itemize}
\item $\textnormal{Asn}(x,y)$ is an $\mc{L}_{\PA}$ formula which asserts that $y$ is an assignment for $x$, which means that it assigns values to all and only those variables which have free occurrences in $x$ ($x$ can be a term or a formula).
\item $s^{\alpha}$ denotes the value of the term $s$ under the assignment $\alpha$.
\item $\dot{\exists}$ denotes the arithmetical operation which given a variable $v$ and a formula $\phi$ returns $\exists v\phi$. $\dot{\vee}$, $\dot{\neg}$ and $\dot{=}$ have analogous  meanings.
\item for any two assignments $\alpha$, $\beta$, we write $\beta\sim_v\alpha$ iff $\beta$ differs from $\alpha$ at most on a variable $v$ and the domain of $\beta$ extends the domain of $\alpha$ at most with $v$.
\item for $\phi\in \Form_{\mc{L}_{\PA}}$, $\beta\restr{\phi}$ denotes the restriction of $\beta$ to the variables which have free occurrences in $\phi$.
\end{itemize}

Moreover we use a standard measure of complexity of formulae: we say that $\phi$ has complexity $x$, written $\textnormal{compl}(\phi)=x$, if $x$ is the maximal number of nested quantifiers and logical connectives. So the complexity of atomic formulae is $0$ and
\begin{align*}
    \textnormal{compl}(\neg \phi) =& \textnormal{compl}(\exists v\phi) = \textnormal{compl}(\phi)+1\\
    \textnormal{compl}(\phi\vee\psi) &= \max\{\textnormal{compl}(\phi), \textnormal{compl}(\psi)\}+1.
\end{align*}

\begin{defn}[$X$-restricted satisfaction class]
    The theory $\csm\restr{X}$ is the theory in $\lpa \cup \{ S,X \}$, where $S$ is a new binary predicate and $X$ a new unary predicate, expressing that $S$ is a compositional satisfaction class for formulae in the set $X$. Its axioms consist of the axioms for $\PA$ along with:

\begin{itemize}
\item[CS0] $\forall x\forall y \bigl(S(x,y)\rightarrow \Form_{\mc{L}_{\textnormal{PA}}}(x)\wedge \textnormal{Asn}(x,y)\bigr)$
\item[CS1] $\forall s,t\in \textnormal{Term}\forall \alpha \bigl(\textnormal{Asn}(s\dot{=}t, \alpha) \wedge X(s\dot{=}t)\rightarrow S((s\dot{=}t),\alpha)\equiv s^{\alpha} = t^{\alpha}\bigr).$
\item[CS2] $\forall \phi,\psi\in\Form_{\mc{L}_{\PA}} \forall \alpha\bigl(\textnormal{Asn}(\phi\dot{\vee}\psi,\alpha) \wedge X(\phi)\wedge X(\psi)\wedge X(\phi\dot{\vee}\psi)\rightarrow S(\phi\dot{\vee}\psi,\alpha)\equiv \bigl(S(\phi,\alpha\restr{\phi})\vee S(\psi,\alpha\restr{\psi})\bigr)\bigr).$
\item[CS3] $\forall \phi\in \Form_{\mc{L}_{\PA}}\forall \alpha\bigl(\textnormal{Asn}(\phi,\alpha)\wedge X(\phi)\wedge X(\dot{\neg}\phi)\rightarrow S(\dot{\neg}\phi,\alpha)\equiv \neg S(\phi,\alpha)\bigr)$
\item[CS4] $\forall \phi\in \Form_{\mc{L}_{\PA}}\forall v\in\textnormal{Var}\forall \alpha\bigl(\textnormal{Asn}(\dot{\exists}v\phi,\alpha) \wedge X(\phi)\wedge X(\dot{\exists}v\phi)\rightarrow S(\dot{\exists} v\phi,\alpha)\equiv \exists \beta\sim_v\alpha S(\phi,\beta\restr{\phi})\bigr).$
\end{itemize}
\end{defn}

By convention, when $(\mc{M}, S,X) \models \csm\upharpoonright_X$, $T(\phi)$ abbreviates $S(\phi, \emptyset)$. If $S\subseteq \mc{M}$ is such that $(\mc{M},S, X)\models \csm\upharpoonright_X$, then $S$ is called an \textit{$X$-satisfaction class} (on $\mathcal{M}$). If $S$ is an $X$-satsfiaction class on $\mc{M}$ and $A\subseteq M$, then by $S\restr{A}$ we  denote the set $S\cap(A\times M)$. In particular $S\restr{A}$ is (under the above assumptions) an $A\cap X$-satsifaction class on $\mc{M}$. $\csm$ extends $\csm\restr{X}$ with an axiom saying that $X$ is the whole universe. When talking about $\csm$ we omit the reference to $X$.  By definition we say that $S$ is a \textit{satisfaction class} (on $\mc{M}$) if $(\mc{M},S)\models \csm$. 

The above axioms of of $\csm$ are weak: in particular, it is known that they are not sufficient to guarantee the correct interaction between quantifiers and term substitutions. For example, it is an exercise to use an Enayat-Visser construction to show that in every countable and recursively saturated model $\mc{M}$ there is a satisfaction class $S$ such that for some formula $\phi$ and assignment $\alpha$, $(\exists v \phi,\alpha) \in S$ but for no closed term $t$, $\langle\phi[t/v],\alpha\rangle\in S$ ($\phi[t/v]$ denotes the substitution of a closed term $t$ for all occurrences of the variable $v$.)

Because of these and similar problems, it is not known whether in an arbitrary model of $(\mc{M}, S)\models \csm$ one can define a compositional truth predicate $T$ for the language of arithmetic satisfying the natural axiom
 \[\forall \phi(v)\bigl(T(\forall v\phi(v))\equiv \forall x T(\phi[\underline{x}/v])\bigr),\]
 where $\underline{x}$ denotes the canonical numeral naming $x$. It is known that each standard definition of truth from satisfation (e.g. ``being satisfied by all assignments" or ``being satisfied by an empty assignment") might fail to define a truth predicate in a model of $\csm$. 

To overcome these problems it is customary to extend the above list of axioms of $\csm$ with the regularity axiom (compare \cite{wcislo_satisfaction_definability}). Its full-blown definition is rather involved and we will give it in the Appendix. A satisfaction class which satisfies the regularity axiom is called a \textit{regular} satisfaction class. Importantly, if $S$ is a regular satisfaction class in $\mc{M}$, then terms with the same values can be substituted for free variables in a formula salva veritate, i.e. for every formula $\phi\in\Form^{\mc{M}}$, every variable $v\in \textnormal{Var}^{\mc{M}}$, all terms $s,t\in \textnormal{Term}^{\mc{M}}$ and all assignments $\alpha$ it holds in $(\mc{M}, S)$ that 
     \[\textnormal{Asn}(\phi([t/v]),\alpha)\wedge \textnormal{Asn}(\phi[s/v],\alpha)\wedge s^{\alpha} = t^{\alpha} \rightarrow S(\phi[s,v],\alpha)\equiv S(\phi[t/v],\beta).\]
    One can check, that if $S$ is a regular satisfaction class in $\mc{M}$, then the formula $\textnormal{Sent}(x)\wedge S(x,\emptyset)$ defines in $(\mc{M},S)$ a truth predicate which satisfy the above natural axiom for the universal quantifier.
    
 In the Appendix we show how to improve one of our constructions in order to obtain regular satisfaction classes. As a consequence we will be able to construct many pathological \textit{truth} classes. However, we decided to leave the regularization of all our constructions for further research.

Another basic property of satisfaction classes is satisfying internal induction. Before introducing it let us define one handy abbreviation: if $(\mc{M},S)\models \csm$, and $\psi$ is a formula in the sense of $\mc{M}$ with exactly one free variable, then $T*\psi(x)$ denotes a $\mc{L}_{\PA}\cup\{S\}$-formula with one free variable $x$ which naturaly expresses ``The result of substituting the numeral naming $x$ for the unique free variable of $\psi$ is satisfied by the empty assignment"(see \cite{wcislyk_mpt}, Lemma 3.6.) We say that in $(\mc{M}, S)\models\csm,$ $S$ satisfies the internal induction iff for every $\psi\in \Form^{\mc{M}}$ with a unique free variable, the formula $T*\psi(x)$ satisfies the induction axiom, i.e.
 \[(\mc{M},S)\models T*\psi(0)\wedge \forall x \bigl(T*\psi(x)\rightarrow T*\psi(x+1)\bigr) \rightarrow \forall x T*\psi(x).\]
 We conjecture that all our constructions can be fine-tuned to yield regular satisfaction classes satisfying internal induction, however we decided to treat this problem on a different occasion.

\begin{rem}\label{rem_int_ind}
As shown in \cite{wcislyk_mpt}, Lemma 3.7, if $(\mc{M}, S) \models \csm$, $S$ is regular and satisfies internal induction, then for every $\psi\in\Form^{\mc{M}}$ with exactly one free variable, if $X_{\psi} = \{x\in M \ \ : \ \ (\mc{M},S)\models T*\psi(x)\}$, then $(\mc{M},X_{\psi})\models \PA^*$. That is, $(\mc{M}, X_\psi)$ satisfies the full induction schema in the language $\lpa \cup \{ X \}$, where $X$ is interpreted as $X_\psi$.
 \end{rem}

 \begin{defn}[Local compositional conditions]
     Let $\textnormal{Comp}(x,y,z)$ be the disjunction of the following $\mc{L}_{\PA}\cup\{S\}$ formulae
     \begin{enumerate}
         \item $\exists s,t\in\textnormal{Term} (x = (s\dot{=}t)\wedge \forall \alpha (\textnormal{Asn}(x,\alpha)\rightarrow S(x,\alpha)\equiv s^{\alpha} = t^{\alpha}))$.
         \item $x = (y\dot{\vee} z) \wedge \forall \alpha(\textnormal{Asn}(x,\alpha) \rightarrow S(x,\alpha)\equiv (S(y,\alpha\restr{y})\vee S(z,\alpha\restr{z})))$.
         \item $x= (\dot{\neg}y)\wedge \forall \alpha(\textnormal{Asn}(x,\alpha)\rightarrow S(x,\alpha)\equiv\neg S(y,\alpha))$.
         \item $\exists v\in \textnormal{Var} (x = \dot{\exists}vy\wedge \forall \alpha(\textnormal{Asn}(x,\alpha)\rightarrow S(x,\alpha)\equiv \exists\beta\sim_v\alpha S(y,\beta\restr{y})))$.
     \end{enumerate}
\end{defn}

Suppose $\anglebracket{\phi_i : i \leq c}$ is a coded sequence of elements of $\Sent^{\mc{M}}$ and suppose $\theta \in \Sent^{\mc{M}}$. 

\begin{itemize}
\item $\bigvee\limits_{i \leq c} \phi_i$ is defined, inductively, so that $\bigvee\limits_{i \leq 0} \phi_i = \phi_0$, and $\bigvee\limits_{i \leq n + 1} \phi_i = (\bigvee\limits_{i \leq n} \phi_i) \vee \phi_{n+1}$.
\item $\bigwedge\limits_{i \leq c} \phi_i$ is defined similarly.

\end{itemize}

Given an $\mc{M}$-definable function $F:\Sent^{\mc{M}}\rightarrow \Sent^{\mc{M}}$, we define $F^c(x)$ by induction on $c$ as follows: $F^0(x) = x$, $F^{c+1}(x) = F(F^c(x))$.
\begin{itemize}
\item $\bigvee\limits^{\text{bin}}_{i \leq c} \theta$ is defined as $F_{\vee}^c(\theta)$, where $F_{\vee}(\phi) = \phi\vee\phi$. These are ``binary idempotent disjunctions." Similarly, one can define ``binary idempotent conjunctions."
\item $(\lnot \lnot)^{c } \theta$ is defined as $F_{\lnot\lnot}^c(\theta)$, where $F_{\lnot\lnot}(\phi) = \neg\neg\phi$.
\item $(\forall x)^{c } \theta$ is defined as $F_{\forall}^c(\theta)$, where $F_{\forall}(\phi) = \forall x \phi$.
\end{itemize}

In a model $(\mc{M}, S) \models \csm$, we can define the following sets:

\begin{itemize}
\item for a given $\theta$, the ``idempotent disjunctively correct set for $\theta$", $$\IDC^{\theta}_S = \{ c : T(\bigvee\limits_{i < c} \theta) \equiv T(\theta) \},$$
\item the ``idempotent disjunctively correct set": $$\IDC_S = \{ c : \forall \phi T(\bigvee\limits_{i < c} \phi) \equiv T(\phi) \}.$$
\item the ``disjunctively correct set": $$\dc_S = \{c\in M : (\mc{M}, S)\models \forall \anglebracket{\phi_i: i\leq c} \bigl(T(\bigvee_{i\leq c} \phi_i)\equiv \exists i\leq c T(\phi_i)\bigr)\}.$$
\end{itemize}

We can similarly define the ``conjunctively correct set" for a given $\theta$, the ``quantifier correct set" for a given $\theta$ ($\QC^{\theta}_S$), the ``double negations correct set" for a given $\theta$ ($\DNC^{\theta}_S$), the ``binary idempotent disjunctively/conjunctively correct set" ($\IDC^{\text{bin}, \theta}_S$), or their respective non-local versions ($\QC_S, \DNC_S, \IDC^{\text{bin}}_S$).

Given a set $X$ (often one of the above pathologically definable sets), we introduce the following notation for \emph{the longest initial segment of $X$}: $$I(X) = \{ x \in X : \forall y \leq x (y \in X) \}.$$ This allows us to denote, for example, the idempotent disjunctively correct \emph{cut}, $I(\IDC_S)$.



\section{Separability}\label{sep-section}
In this part, we classify which sets can be $\IDC^{0 = 1}_S$ for some $S$. Rather than simply looking at disjunctions, however, we generalize the setting to draw similar conclusions about the conjunctively correct set for $0 = 0$, the double negations correct set for any atomic sentence $\phi$, or the binary idempotent disjunctively / conjunctively correct set for $\phi$ and much more.

\begin{defn}\label{sep}
Let $X \subseteq \Form^{\mc{M}}$.
\begin{enumerate}
	\item If $x, y \in \Form^{\mc{M}}$, we say $x \triangleleft y$ if $x$ is an immediate subformula of $y$.
	\item $X$ is \emph{closed} if whenever $x \triangleleft y \in X$, then $x \in X$.
	\item $\Cl(X)$ is the smallest closed set containing $X$.
	\item $F \subseteq X$ \emph{generates} $X$ if $X = \Cl(F)$.
	\item $X$ is \emph{finitely generated} if there is a finite $F \subseteq X$ that generates it.
\end{enumerate}
\end{defn}	
We describe a generalization of the idempotent disjunction operation $c \mapsto \bigvee\limits_{i < c} \theta$.
\begin{defn}\label{local-id}
    Fix a standard sentence $\theta$. Let $\Phi(p, q)$ be a (finite) propositional template, over propositional variables $p$ and $q$. By this we mean that in $\Phi$ we allow all propositional connectives, along with quantifiers (over dummy variables). We insist that $\Phi$ has non-zero complexity (that is, $\Phi(p, q)$ has at least one propositional connective or quantifier), along with the following properties:
\begin{itemize}
    \item $q$ appears in $\Phi(p, q)$,
    \item if $\mc{M} \models \theta$, then $\Phi(\top, q)$ is equivalent to $q$, and
    \item if $\mc{M} \models \lnot \theta$, then $\Phi(\bot, q)$ is equivalent to $q$.
\end{itemize} Define $F : M \to \Sent^{\mc{M}}$ as follows:
\begin{itemize}
    \item $F(0) = \theta$, and
    \item $F(x + 1) = \Phi(\theta, F(x))$.
\end{itemize} We say such an $F$ is a \emph{local idempotent sentential operator for $\theta$}, and $\Phi(p, q)$ is a \emph{template} for $F$.
\end{defn}
We emphasize here that $\Phi$ is finite, so that if $\phi$ and $\psi$ are sentences, then $\psi \in \Cl(\Phi(\phi, \psi))$. In addition, if $p$ appears in $\Phi(p, q)$, then $\phi \in \Cl(\Phi(\phi, \psi))$ as well. 

Note that for any $n \in \omega$ and atomic sentence $\theta$, if $F$ is a local idempotent sentential operator for $\theta$ and $(\mc{M}, S) \models \csm$, then $(\mc{M}, S) \models T(\theta) \equiv T(F(n))$. In fact, $(\mc{M}, S) \models T(F(x)) \equiv T(F(x + n))$, for each $x \in M$.

This approach allows us to generalize several examples of local pathologies, for example:
\begin{align*}
\left\{ \bigvee\limits_c (0 \neq 0) : c \in M \right\}, & &
 \left\{ \bigwedge\limits_c (0 = 0) : c \in M \right\},\\
\left\{ (\forall x)^c (0 = 0) : c \in M \right\}, & &
\left\{ (\lnot \lnot)^c (0 = 0) : c \in M \right\}
\end{align*} 
can all appear as $\{ F(c) : c \in M \}$ for various $\theta$ and $\Phi$. We study the question of when, given such a function $F$, a set $X$ can be the set $\{ x : T(F(x)) \equiv T(\theta) \}$ in a model $(\mc{M}, S) \models \csm$. We will see that such sets $X$ will require the following property.

\begin{defn}
Let $\mc{M} \models \PA$, and $A \subseteq D \subseteq M$. $A$ is \emph{separable from $D$} if for each $a$ such that for every $n \in \omega$, $(a)_n \in D$, there is $c$ such that for each $n \in \omega$, $(a)_n \in A$ if and only if $\mc{M} \models n \in c$. We say a set $X$ is \emph{separable} if it is separable from $M$.
\end{defn}

In Propositions \ref{def-are-sep}, \ref{prop_closure_sep_preim}, and \ref{prop_supremum_separabilit}, we refer to definable sets and functions. Here we insist that these are definable in the arithmetic structure of $\mc{M}$: that is, they are definable (possibly using parameters) using formulas from $\lpa$. First we notice some basic properties of separability.

\begin{prop}\label{def-are-sep}
Let $\mc{M} \models \PA$. Suppose $D_1, D_2$ are $\mc{M}$-definable, $A\subseteq D_1\cap D_2$, and $A \neq D_1, D_2$. Then $A$ is separable from $D_1$ iff $A$ is separable from $D_2$.
\end{prop}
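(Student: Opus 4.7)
The plan is to prove each direction separately and observe they are symmetric, so I only need to show: if $A$ is separable from $D_1$, then $A$ is separable from $D_2$. The key idea is that a sequence of elements in $D_2$ can be massaged into a sequence of elements in $D_1$ in an $A$-preserving way, using an element of the (nonempty) set $D_1\setminus A$ as a ``dummy'' value standing in for anything that is known not to be in $A$.

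In detail: assume $A$ is separable from $D_1$. Since $A\subsetneq D_1$, fix some $d_1\in D_1\setminus A$. Given any $a\in M$ with $(a)_n\in D_2$ for all $n\in\omega$, use the $\mc{M}$-definable function
\[ f(x)=\begin{cases} x & \text{if } x\in D_1,\\ d_1 & \text{if } x\notin D_1,\end{cases}\]
and, inside $\mc{M}$, let $b$ be the coded sequence of the same length as $a$ defined by $(b)_i=f((a)_i)$; such a $b$ exists because $D_1$ is $\mc{M}$-definable and $\PA$ proves that the image of a coded sequence under a definable function is coded. By construction $(b)_n\in D_1$ for every $n\in\omega$, so by separability of $A$ from $D_1$ there is $c\in M$ with $(b)_n\in A\iff \mc{M}\models n\in c$ for all $n\in\omega$.

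The final step is to show this same $c$ works for $a$, i.e. $(a)_n\in A\iff n\in c$. Split into cases on whether $(a)_n\in D_1$: if $(a)_n\in D_1$, then $(b)_n=(a)_n$ and the biconditional is immediate; if $(a)_n\notin D_1$, then $(b)_n=d_1\notin A$, and on the other hand $(a)_n\notin D_1\supseteq A$, so both sides are false. Hence $(a)_n\in A\iff (b)_n\in A\iff n\in c$, proving separability of $A$ from $D_2$. Swapping the roles of $D_1$ and $D_2$ (using $A\subsetneq D_2$ to pick some $d_2\in D_2\setminus A$) gives the converse.

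I don't expect a serious obstacle here; the whole content is in the idea of coding the replacement sequence $b$. The hypothesis $A\neq D_1,D_2$ is used exactly once in each direction, to guarantee the existence of the witness $d_i\in D_i\setminus A$; and the hypothesis $A\subseteq D_1\cap D_2$ is what makes the case analysis in the verification step work (it ensures that elements outside $D_1$ are automatically outside $A$).
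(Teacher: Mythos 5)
Your proof is correct and matches the paper's argument essentially verbatim: both fix a dummy element $d\in D_1\setminus A$, replace the out-of-$D_1$ entries of the given sequence by $d$ to land inside $D_1$, apply separability from $D_1$ to get the witness $c$, and observe that the same $c$ works because $A\subseteq D_1$ forces the substituted entries to lie outside $A$. You simply spell out the verification in more detail than the paper does.
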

\begin{proof}
Fix $d\in D_1\setminus A$. Assume $A$ is separable from $D_1$ and fix any $a$ such that for every $n$, $(a)_n\in D_2$. Let $b$ be defined by
\begin{displaymath}
(b)_i = \left\{\begin{array}{l} (a)_i \textnormal{ if } (a)_i\in D_1\\
                              d \textnormal{ otherwise } \end{array}\right.  
\end{displaymath}
Then for every $i\in\omega$, $(b)_i\in D_1$, so there is $c$ such that for every $i\in\omega$, $(b)_i \in A$ iff $i\in c$. Then it follows that also for every $i\in\omega$, $(a)_i \in A$ iff $i\in c$.
\end{proof}

\begin{prop}\label{prop_closure_sep_preim}
Let $\mc{M} \models \PA$. Suppose $A\subseteq D$ and $f$ is an $\mc{M}$-definable function such that $D\subseteq\text{im}(f)$. Then if $A$ is separable from $D$, then $f^{-1}[A]$ is separable from $f^{-1}[D]$
\end{prop}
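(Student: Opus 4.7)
The plan is straightforward: given a sequence $a$ witnessing the hypothesis for $f^{-1}[A]$ and $f^{-1}[D]$, I push it forward by $f$ to get a sequence that witnesses the hypothesis for $A$ and $D$, then pull the separator back.

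More precisely, fix $a \in M$ such that $(a)_n \in f^{-1}[D]$ for every $n \in \omega$. Since $f$ is $\mc{M}$-definable, I can use the coded sequence apparatus available in $\PA$ to form an element $b \in M$ of the same length as $a$, defined by
$$(b)_i = \begin{cases} f((a)_i) & \text{if } (a)_i \in \dom(f), \\ 0 & \text{otherwise}. \end{cases}$$
For each $n \in \omega$, the assumption $(a)_n \in f^{-1}[D]$ forces $(a)_n \in \dom(f)$ with $f((a)_n) \in D$, so $(b)_n = f((a)_n) \in D$. Thus $b$ witnesses the hypothesis in the definition of ``$A$ separable from $D$''; by that assumption there is $c \in M$ such that for every $n \in \omega$, $(b)_n \in A$ iff $n \in c$.

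Now I claim this same $c$ separates $f^{-1}[A]$ from $f^{-1}[D]$ relative to $a$. Indeed, for $n \in \omega$ we have $(b)_n = f((a)_n)$, so
$$(a)_n \in f^{-1}[A] \iff f((a)_n) \in A \iff (b)_n \in A \iff n \in c,$$
which is exactly what is required.

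The only thing that needs care is the formation of $b$: this uses the $\mc{M}$-definability of $f$ to guarantee that the sequence $\langle f((a)_i) : i < \len(a) \rangle$ (with a default value plugged in outside $\dom(f)$) is coded in $\mc{M}$. Everything else is a direct bookkeeping check against the definitions, and no obstruction arises, since the separability condition only constrains the sequence at standard indices, where pushing forward by $f$ behaves correctly by the hypothesis on $a$.
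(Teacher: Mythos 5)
Your proof is correct. The paper leaves this as an "easy exercise", and the pushforward argument you give — replacing $a$ by $b$ with $(b)_i = f((a)_i)$, invoking separability of $A$ from $D$ to get $c$, and then noting $(a)_n \in f^{-1}[A] \iff (b)_n \in A \iff n \in c$ — is exactly the intended one; the observation that $b$ is $\mc{M}$-coded because $f$ is $\mc{M}$-definable is the only point requiring any care, and you handle it.
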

\begin{proof}
Easy exercise.
\end{proof}

\begin{prop}\label{prop_supremum_separabilit}
Let $\mc{M} \models \PA$. Let $I\subseteq_e \mc{M}$ and $A\subseteq \mc{M}$ be an $\mc{M}$-definable set such that $\sup(A\cap I) = I$ and $A\cap I$ is separable. Then $I$ is separable. 
\end{prop}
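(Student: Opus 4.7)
The plan is to reduce the claim to the already-given separability of $A \cap I$. Fix an arbitrary $a \in M$; I want to produce $c \in M$ such that, for each $n \in \omega$, $n \in c$ iff $(a)_n \in I$. My strategy is to build from $a$ an auxiliary coded sequence $b \in M$, of the same length as $a$, with the property that for every $n \in \omega$,
\[
(b)_n \in A \cap I \iff (a)_n \in I.
\]
Applying separability of $A \cap I$ to the sequence $b$ then produces exactly the code $c$ witnessing separability of $I$ on $a$.

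To construct $b$ I would apply pointwise to the entries of $a$ the $\mc{M}$-definable partial function $g(x) := \mu y (y \in A \wedge y \geq x)$: set $(b)_n := g((a)_n)$ when this minimum exists, and $(b)_n := (a)_n$ otherwise. The verification splits cleanly on whether $(a)_n \in I$. If $(a)_n \in I$, then the hypothesis $\sup(A \cap I) = I$ produces some $y \in A \cap I$ with $y \geq (a)_n$, forcing $g((a)_n)$ to be defined and, as the least element of $A$ above $(a)_n$, to lie $\leq y$; hence $(b)_n \in A \cap I$. If $(a)_n \notin I$, then $I$ being a cut means every $y \geq (a)_n$ lies outside $I$, so whichever branch of the definition applies, $(b)_n \notin A \cap I$.

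The one point demanding care is that $b$ should actually be a coded element of $M$. Here I would use that $g$ is non-decreasing on its domain (larger $x$ shrinks the set of admissible $y$) and that its domain is downward closed, so that on the set $S$ of indices $n < \len(a)$ for which $g((a)_n)$ is defined, the values $g((a)_n)$ are bounded by $g(\max_{n\in S}(a)_n)$, while for $n \notin S$ the values $(a)_n$ are bounded by $\max_{n<\len(a)}(a)_n$. Both bounds are $\mc{M}$-definable from $a$, so $b$ may be coded in $\mc{M}$. Beyond this boundedness bookkeeping the proof is essentially a one-step reduction, and I do not foresee any deeper obstacle.
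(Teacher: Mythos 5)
Your proof is correct and essentially matches the paper's: there the function $f(x) = \mu y\,(y \in A \wedge y \geq x)$ (with default value $0$) is defined, the identity $I = f^{-1}[A\cap I]$ is asserted, and Proposition~\ref{prop_closure_sep_preim} is invoked, whereas you have inlined that preimage argument by forming the sequence $b$ and applying separability of $A\cap I$ to it directly. A small bonus of your version: taking $(a)_n$ rather than $0$ as the fallback value is the safer choice, since if $0\in A$ and $A$ is bounded by an element above $I$, the paper's $f$ sends every $x>\max A$ (hence some $x\notin I$) to $0 \in A\cap I$, so that strictly $I\subsetneq f^{-1}[A\cap I]$.
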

\begin{proof}
Define the function $f$ by 

\begin{displaymath}
f(x) = \left\{\begin{array}{l} \mu y. \{y\in A : x\leq y\} \textnormal{ if such $y$ exists }\\
                              0 \textnormal{ otherwise } \end{array}\right.  
\end{displaymath}
Then, by the assumptions, $I = f^{-1}[A\cap I]$. The result follows by Proposition \ref{prop_closure_sep_preim}.
\end{proof}

As stated before, given $\theta$, a local idempotent sentential operator $F$ for $\theta$, and $D = \{ F(x) : x \in M \}$, we wish to classify the subsets $A \subseteq D$ which can be the sets of true sentences in $D$ (equivalently, we wish to classify the sets $X$ such that $\{ F(x) : x \in X \}$ is the set of true sentences in $D$). First we need the following Lemma.

\begin{lem}\label{base-for-cl-f-iterates}
Let $\mc{M} \models \PA$. Let $\theta$ be an atomic sentence and $F$ a local idempotent sentential operator for $\theta$. Let $J_0, J_1 \subseteq M$ be closed under predecessors, disjoint, and $J_i \cap \omega = \emptyset$ for $i = 0, 1$. Let $X = \Cl(\{ F(x) : x\in J_0\cup J_1 \})$. Then there is a unique $X$-satisfaction class $S$ such that for each $i$ and $x \in J_i$, $(F(x), \emptyset) \in S$ if and only if $i = 0$.\end{lem}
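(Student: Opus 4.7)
The plan is to define $S$ on $X$ by reading off the compositional axioms as a recipe, with $\mc{M}$'s verdict on $\theta$ and the prescribed values on $\{F(x) : x \in J_0 \cup J_1\}$ as base cases, then verify that no conflict arises. First I would describe $X$ explicitly. Because $\Cl$ is the (external) closure under the immediate subformula relation $\triangleleft$, any $F$-iterate lying in $X$ must arise by a standardly finite chain of predecessors from some $F(x)$ with $x \in J_0 \cup J_1$; closure of each $J_i$ under predecessors together with $J_i \cap \omega = \emptyset$ ensures such a chain never leaves $J_0 \cup J_1$. So $X \cap \{F(y) : y \in M\} = \{F(y) : y \in J_0 \cup J_1\}$, and the remainder of $X$ consists of $\theta$ (already atomic) together with, for each $y \in J_0 \cup J_1$, the finitely many "template subpieces" $\Phi'(\theta, F(y-1))$ as $\Phi'$ ranges over proper subformulas of $\Phi(p,q)$.

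Next, I would set $\theta \in S$ iff $\mc{M} \models \theta$; place $F(y) \in S$ for $y \in J_0$ and $F(y) \notin S$ for $y \in J_1$; and assign membership in $S$ to each template subpiece $\Phi'(\theta, F(y-1))$ by recursion on the externally finite structure of $\Phi'$, using the already-assigned truth values of $\theta$ and $F(y-1)$ as inputs. By construction this meets the compositional axioms CS1--CS4 on every compound formula that lies strictly inside a template block. The only remaining consistency check is the top-level equation $F(y) = \Phi(\theta, F(y-1))$ for $y \in J_0 \cup J_1$: compositionality demands $T(F(y)) \equiv \Phi^{\mc{M}}(T(\theta), T(F(y-1)))$, and this is where the idempotency hypothesis on $\Phi$ is used. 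Since $\theta$ is standard, $\mc{M}$ decides it one way, and the property that either $\Phi(\top, q) \equiv q$ or $\Phi(\bot, q) \equiv q$ (propositionally) collapses the right-hand side to $T(F(y-1))$. Predecessor-closure of $J_i$ places $y - 1$ in the same $J_i$ as $y$, so $T(F(y-1)) = T(F(y))$ by our prescription, and the axiom is satisfied.

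Uniqueness is then automatic: any $X$-satisfaction class realizing the prescription must agree with $S$ on $\theta$ by CS1, on each template subpiece by iterated application of CS2--CS4 inside the standard-size $\Phi$, and on each $F(y)$ with $y \in J_0 \cup J_1$ by fiat. The delicate point — and the main obstacle — is purely bookkeeping: one must confirm that $\Cl$, applied externally to a set of nonstandard-depth formulas, does not secretly drag in any $F$-iterate $F(z)$ with $z \notin J_0 \cup J_1$, since such an $F(z)$ would impose an extra compositional constraint we have no freedom to satisfy. The combination of predecessor-closure and $J_i \cap \omega = \emptyset$ is exactly what rules this out, and the template idempotency is exactly what lets the prescription on $\{F(y) : y \in J_0 \cup J_1\}$ propagate through compositionality without contradiction.
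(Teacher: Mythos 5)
Your proposal is correct and follows essentially the same construction as the paper: prescribe $S$ on $\theta$ (via CS1) and on the $F(y)$'s (via the $J_i$'s), extend through the interior of each standard-depth template block by compositionality, and use idempotency of $\Phi$ together with predecessor-closure of the $J_i$'s to verify the top-level equation $F(y) = \Phi(\theta, F(y-1))$. The paper phrases the interior step by classifying subformulas of $\Phi(\theta,q)$ into the propositional-equivalence classes $q, \neg q, \top, \bot$ and defining $S$ by cases, whereas you compute by recursion on the finite structure of $\Phi$; these are the same argument in different clothing.
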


\begin{proof}
Let $S_0 = \{ (F(x), \emptyset) : x \in J_0 \}$. We extend $S_0$ to an $X$-satisfaction class $S$. Take any $\phi\in X$. Then, since $J_i$ are closed under predecessors and disjoint, then there is a unique $i$ and minimal $x$ such that $\phi \in \Cl(F(x)) $ and $x\in J_i$. Recall that $F(x) = \Phi(\theta, F(x - 1))$, and $\theta$ is atomic. One notices that the subformulas of $\Phi(\theta, q)$ must be equivalent to one of $q$, $\lnot q$, $\top$, or $\bot$. Let $\Psi(p, q)$ be the subformula of $\Phi(p, q)$ such that $\Psi(\theta, F(x - 1)) = \phi$. Again, the presentation of $\phi$ as $\Psi(\theta, F(x-1))$ is unique by induction in $\mc{M}$. We put $\anglebracket{\phi, \emptyset} \in S$ if any of the following hold:
\begin{itemize}
\item $\Psi(\theta, q)$ is equivalent to $q$ and $i = 0$, 
\item $\Psi(\theta, q)$ is equivalent to $\lnot q$ and $i = 1$, or
\item $\Psi(\theta, q)$ is equivalent to $\top$.
\end{itemize} One checks that $S$ is an $X$-satisfaction class.
\end{proof}

Theorems \ref{truth-closed-theorem} and \ref{separability-theorem} are generalizations of unpublished work by Jim Schmerl\footnote{Private communication to Ali Enayat.}.

\begin{thm}\label{truth-closed-theorem}Let $\mc{M} \models \PA$ be countable and recursively saturated. Let $\theta$ be an atomic sentence and $F$ a local idempotent sentential operator for $\theta$. Let $X \subseteq M$ be separable, closed under successors and predecessors, and for each $n \in \omega$, $n \in X$ if and only if $\mc{M} \models \theta$. Then $\mc{M}$ has an expansion $(\mc{M}, S) \models \csm$ such that $X = \{ x \in M : (\mc{M}, S) \models T(F(x)) \equiv T(\theta) \}$.
\end{thm}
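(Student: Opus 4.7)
The plan is to first pin down truth values on the iterates $F(x)$ using Lemma \ref{base-for-cl-f-iterates}, and then extend to a full satisfaction class via a back-and-forth argument using the recursive saturation of $\mc{M}$; the separability of $X$ is precisely what makes this extension possible. Assume without loss of generality that $\mc{M} \models \theta$ (the case $\mc{M} \models \neg\theta$ is symmetric). Set $J_0 := X \setminus \omega$ and $J_1 := (M \setminus X) \setminus \omega$. Since $X$ is closed under both successors and predecessors, and no nonstandard element has a standard predecessor, both $J_0$ and $J_1$ are closed under predecessors, are disjoint, and each is disjoint from $\omega$. Lemma \ref{base-for-cl-f-iterates} then yields a unique $X_0$-satisfaction class $S_0$ on the closed set $X_0 := \Cl(\{F(x) : x \in J_0 \cup J_1\})$ with $T(F(x))$ holding in $(\mc{M}, S_0)$ iff $x \in J_0$. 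For $n \in \omega$, compositionality of $S_0$ together with the template property of $F$ forces $T(F(n)) \equiv T(\theta)$, matching the hypothesis $\omega \subseteq X$.

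The remaining task is to extend $S_0$ to a full compositional satisfaction class $S$ on $\mc{M}$; then $(\mc{M}, S) \models T(F(c)) \equiv T(\theta)$ exactly when $c \in X$, as desired. I would carry this out by an Enayat--Visser style back-and-forth: enumerate $\Form^{\mc{M}}$ as $(\phi_n)_{n<\omega}$ and construct an increasing chain of finite partial compositional assignments $(s_n)_{n<\omega}$, each agreeing with $S_0$ on the portion of its domain that lies in $X_0$, deciding $\phi_n$ by stage $n$, and remaining extendable to a full compositional assignment on all of $\Form^{\mc{M}}$. The union $\bigcup_n s_n$ together with $S_0$ will yield the required $S$.

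The main obstacle is verifying extendability at each back-and-forth step: we need to realize in $\mc{M}$ a type expressing ``there exists a coded compositional evaluation agreeing with $s_n$ and with $S_0$ on the relevant portion, and deciding $\phi_{n+1}$.'' For recursive saturation to produce such a realization, this type must be recursive in parameters from $\mc{M}$. Separability is exactly the tool needed: by Proposition \ref{prop_closure_sep_preim} applied to the $\mc{M}$-definable function $F$, the set $\{F(x) : x \in X\}$ is separable from $\{F(x) : x \in M\}$, so the $S_0$-commitments along any $\mc{M}$-coded sequence of iterates can be recovered as a bit-string coded in $\mc{M}$. This makes the extendability type recursive in parameters. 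Finite consistency of the commitments, needed to ensure the type is nonempty at each stage, follows from a local application of Lemma \ref{base-for-cl-f-iterates} to the finite set of iterates appearing as subformulas of $\phi_{n+1}$. Countable recursive saturation of $\mc{M}$ then realizes the type at each step, and the back-and-forth closes.
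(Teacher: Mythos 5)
Your overall plan is the right one, and it is essentially the same as the paper's: use separability to get, for each $\mc{M}$-coded enumeration of iterates, an $\mc{M}$-coded bit-string $c$ recording which should be true; use Lemma~\ref{base-for-cl-f-iterates} to get a compatible local satisfaction class on the closure of the iterates involved; and then iterate over a chain $F_0 \subseteq F_1 \subseteq \ldots$ of finitely generated sets exhausting $\Form^{\mc{M}}$. Your idea of first fixing $S_0$ globally on $X_0 = \Cl(\{F(x) : x \in M\setminus\omega\})$ (by a single application of the Lemma with $J_0 = X\setminus\omega$ and $J_1 = (M\setminus X)\setminus\omega$) and then only ever restricting it to finitely generated pieces is a clean reformulation and, thanks to the uniqueness in the Lemma, equivalent to the paper's per-stage reconstruction.

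The gap is in the extension step. You describe an increasing chain of \emph{finite} partial compositional assignments $(s_n)$ maintained so as to ``remain extendable to a full compositional assignment on all of $\Form^{\mc{M}}$,'' and you appeal to recursive saturation to ``realize the type at each step.'' Two problems. First, these fragments cannot be literally finite: a nonstandard formula has nonstandardly many (hence infinitely many) subformulas, so for compositionality to be checkable on the eventual union you need the fragments to be \emph{finitely generated} sets of formulas in the paper's sense (downward closed under $\triangleleft$), which are infinite. Second, and more seriously, ``remaining extendable to a full satisfaction class agreeing with $S_0$'' is a second-order invariant: it is not expressible as a recursive type over $\mc{M}$, and realizing a type only produces an \emph{element} of $\mc{M}$, not a full satisfaction class. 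The paper avoids this by never trying to encode extendability as a type condition: it maintains at each stage an actual full satisfaction class $S_i$ with $(\mc{M}, S_i)$ recursively saturated, shows consistency of the next requirements via \cite[Lemma 3.1]{enayat-visser}, and then obtains $S_{i+1}$ on $\mc{M}$ itself by resplendency of $(\mc{M}, S_i)$. Crucially, it then invokes \emph{chronic} resplendency to arrange that $(\mc{M}, S_{i+1})$ is again recursively saturated, so the next stage can be carried out. Without this you cannot iterate, because each expansion step needs recursive saturation of the structure being expanded, not just of $\mc{M}$. To repair your argument you should replace ``finite partial assignments remaining extendable'' with ``full satisfaction classes $S_i$ agreeing with $S_0$ on $F_i \cap X_0$,'' replace the appeal to recursive saturation at each step with resplendency of $(\mc{M}, S_i)$, and invoke chronic resplendency to keep recursive saturation available across the iteration.
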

Notice that $X$ is separable if and only if $M \setminus X$ is separable. This means that there is some flexibility in building such satisfaction classes $S$.

\begin{proof}
Let $D = \{ F(x) : x \in M \}$ and $A = \{ F(x) : x \in X \}$. Note that $A$ is separable from $D$. We build sequences $F_0 \subseteq F_1 \subseteq \ldots$ and $S_0, S_1, \ldots$ such that:
\begin{itemize}
	\item each $F_i$ is a finitely generated set of formulas such that $\cup F_i = \Form^{\mc{M}}$,
	\item each $S_i$ is a full satisfaction class and $(\mc{M}, S_i)$ is recursively saturated,
	\item $S_{i+1} \restriction F_i = S_i \restriction F_i$, and
	\item for each $\phi \in D \cap F_i$, $(\phi, \emptyset) \in S_i$ if and only if $\phi \in A$.
\end{itemize}
Given such a sequence, $S = \cup S_i \restriction F_i$ would be the required full satisfaction class on $\mc{M}$.

Externally fix an enumeration of $\Form^{\mc{M}}$ in order type $\omega$. We can assume, without loss of generality, that $\theta$ appears first in this enumeration. Suppose $F_i$ and $S_i$ have been constructed. Let $F_{i+1}$ be generated by $F_i$ and the least $x \in \Form^{\mc{M}} \setminus F_i$ in the aforementioned enumeration. Let $F^\prime = F_i \cup (F_{i+1} \cap D)$. Let $a$ be a sequence such that $\{ F((a)_n) : n \in \omega \} = F_{i+1} \cap D$. Note that this is possible since $F_{i+1}$ is finitely generated. Let $c$ be as in the definition of separability for $a$. 

Since $X$ is closed under successors and predecessors, then if $(a)_n$ and $(a)_m$ are in the same $\mathbb{Z}$-gap (that is, there is some $k \in \omega$ such that $(a)_n$ and $(a)_m$ differ by $k$), then $(a)_n \in X$ if and only if $(a)_m \in X$. Since $X$ is separable, this means that, if $(a)_n$ and $(a)_m$ are in the same $\mathbb{Z}$-gap, then $n \in c$ if and only if $m \in c$. Let $J_0$ be the closure under successors and predecessors of $\{ (a)_n : n \in \omega, n \in c,$ and $(a)_n > \omega \}$, and $J_1$ be the closure under successors and predecessors of $\{ (a)_n : n \in \omega, n \notin c,$ and $(a)_n > \omega \}$. By Lemma \ref{base-for-cl-f-iterates}, there is a $\Cl(F_{i+1} \cap D)$-satisfaction class $S^\prime$ such that for each $\phi = F((a)_n) \in F_{i+1} \cap D$, $S^\prime(F((a)_n, \emptyset)$ if and only if $(a)_n \in X$. That is, $S^\prime(\phi, \emptyset)$ if and only if $\phi \in A$.

Notice that $\Cl(F^\prime) = F_i \cup \Cl(F_{i+1} \cap D)$. We extend $S^\prime$ to a $\Cl(F^\prime)$ satisfaction class simply by preserving $S_i$ on $F_i$. One notices that if $\phi \in F_i \cap D$, then by induction $\anglebracket{\phi, \emptyset} \in S_i$ if and only if $\phi \in A$.

Then $S^\prime$ is a $\Cl(F^\prime)$-satisfaction class, so by \cite[Lemma 3.1]{enayat-visser}, $\mc{M}$ has an elementary extension $\mc{N}$ carrying a $\Form^{\mc{M}}$-satisfaction class $S$ agreeing with $S^\prime$ on $\Cl(F^\prime)$. In particular, this shows the consistency of the recursive theory $Th$ consisting of the following:
\begin{itemize}
	\item $S$ is a full satisfaction class,
	\item $\{ S(\phi, \alpha) \equiv S_i(\phi, \alpha) : \phi \in F_i \}$, and
	\item $\{ S(F((a)_n), \emptyset) \equiv n \in c : n \in \omega \}$.
\end{itemize} Since $(\mc{M}, S_i)$ is recursively saturated, by resplendency $(\mc{M}, S_i)$ has an expansion to $Th$, and such an expansion is a full satisfaction class agreeing with $S^\prime$ on formulas from $\Cl(F^\prime)$. Recall that countable recursively saturated models are \emph{chronically resplendent} (\cite[Theorem 1.9.3]{ks}): by this we mean that such expansions can, themselves, be taken to be resplendent. That is, we can assume that $(\mc{M}, S_i, S)$ is recursively saturated. Let $S_{i+1} = S$ and continue.
\end{proof}

In the above result, notice that if $n \in \omega$, then clearly $\mc{M} \models F(n)$ if and only if $\mc{M} \models \theta$. Therefore, $\omega \subseteq X$ if and only if $\mc{M} \models \theta$, and $\omega \cap X = \emptyset$ if and only if $\mc{M} \models \lnot \theta$. Moreover, if $X = \{ x : (\mc{M}, S) \models T(F(x)) \}$ then $X$ is necessarily closed under successors and predecessors. The next result shows that separability of $X$ is also necessary. 

\begin{thm}\label{separability-theorem}
Suppose $(\mc{M}, S) \models \csm$, $D$ is any set of sentences (not necessarily of the form $\{ F(x) : x \in M \}$), and $A = \{ \phi \in D : (\mc{M}, S) \models T(\phi) \}$. Then $A$ is separable from $D$.
\end{thm}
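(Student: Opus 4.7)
The plan is to realize a recursive $1$-type in $(\mc{M}, S)$ whose realizations are precisely the witnesses to separability. Fix $a \in M$ with $(a)_n \in D$ for every $n \in \omega$. For each $n \in \omega$ write $\varphi_n(c)$ for the $\lpa \cup \{S\}$-formula
\[(n \in c) \leftrightarrow S((a)_n, \emptyset),\]
and set $p(c) := \{\varphi_n(c) : n \in \omega\}$. Then $p$ is a type that is recursive in the parameter $a$, and any realization of $p$ by an element $c \in M$ is exactly the $c$ demanded by the definition of separability for $A$ from $D$ along the sequence $a$.

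First I would verify that $p$ is finitely satisfiable in $(\mc{M}, S)$. Given a finite $I \subseteq \omega$, let $c_I \in M$ be the code of the finite set $\{n \in I : (\mc{M}, S) \models T((a)_n)\}$; equivalently, $c_I = \sum_{n \in I,\, T((a)_n)} 2^n$. Since this is a finite sum of powers of two, $c_I$ sits inside the standard initial segment of $M$, and by construction its $n$-th bit for $n \in I$ agrees with the truth value of $T((a)_n)$, so $\varphi_n(c_I)$ holds for each $n \in I$. This step is routine, since any finite truth pattern is always coded by some standard natural number.

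The proof would then finish by invoking recursive saturation of $(\mc{M}, S)$ to realize $p$ simultaneously by some $c \in M$. In the paper's operating setting, where $\mc{M}$ is countable recursively saturated and the satisfaction class $S$ is produced by the chronically resplendent expansion technique used in Theorem \ref{truth-closed-theorem}, $(\mc{M}, S)$ is itself recursively saturated and hence realizes every recursive type over its parameters. The main obstacle---and the reason a direct argument by overspill does not work---is that $\csm$ provides no induction for $\lpa \cup \{S\}$-formulas, so the finite witnesses $c_I$ cannot be stitched together inside $\mc{M}$ without appealing to saturation. Once saturation of the expansion is granted, however, the argument collapses to the single observation that the required set is captured by a recursive type over $(\mc{M}, S)$, and every such type is realized.
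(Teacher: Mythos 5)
There is a genuine gap. Your argument appeals in its final step to recursive saturation of the expansion $(\mc{M}, S)$, but that is not a hypothesis of Theorem~\ref{separability-theorem}: the theorem merely assumes $(\mc{M}, S) \models \csm$, and it is later applied (e.g.\ in Corollary~\ref{arith-sat} and Proposition~\ref{wsr-or-no-least-z}) to arbitrary models of $\csm$. A full satisfaction class on a countable recursively saturated $\mc{M}$ need not make the pair $(\mc{M}, S)$ recursively saturated --- that extra property is obtained only when one deliberately builds $S$ via resplendence, and even the classes constructed in Theorem~\ref{truth-closed-theorem} are not claimed to have it for the final union $S = \cup S_i\restr{F_i}$. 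So the qualification ``in the paper's operating setting'' in your last paragraph does not rescue the argument: the theorem must hold for every $(\mc{M}, S) \models \csm$, not only the nicely-built ones, and your type $p(c) = \{ (n \in c) \leftrightarrow S((a)_n, \emptyset) : n \in \omega \}$ lives in the external language $\lpa \cup \{S\}$, whose recursive types need not be realized in $(\mc{M}, S)$.

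The paper closes exactly this gap by replacing external saturation with Smith's theorem that \emph{every} model of $\csm$ is \emph{definably $S$-saturated} (\cite[Theorem 2.19]{smith-nonstandard-def}): if $\anglebracket{\phi_i(x) : i \in \omega}$ is a coded sequence of $\mc{M}$-internal $\lpa$-formulas and for each $i$ there is $m$ with $(\mc{M}, S) \models \forall j < i\,T(\phi_j(m))$, then some $m$ works for all $i$ simultaneously. The crucial move is to take $\phi_j(x)$ to be the \emph{internal} formula $(a)_j \leftrightarrow (j \in x)$ --- an element of $\Form^{\mc{M}}$, not of the external language --- and evaluate it under $T$, so that the saturation principle being invoked is one that $\csm$ alone guarantees. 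Your finite-satisfiability check is essentially the same as the paper's, but the saturation step must go through the internal formulas and Smith's result, not through recursive saturation of the expansion.
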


Note that by Proposition \ref{prop_closure_sep_preim}, if $D = \{ F(x) : x \in M \}$, $A = \{ F(x) : (\mc{M}, S) \models T(F(x)) \}$, and $X = \{ x : F(x) \in A \}$, this is equivalent to stating that $X = \{ x : F(x) \in A \}$ is separable (from $M$).

\begin{proof}
Let $a \in M$ be such that for each $n \in \omega$, $(a)_n \in D$. We show that that there is a $c$ so that for all $i \in \omega$, $(a)_i \in A$ iff $i \in c$.

By a result of Stuart Smith, \cite[Theorem 2.19]{smith-nonstandard-def}, $(\mc{M}, S)$ is \emph{definably $S$-saturated}. This means that for any coded sequence $\anglebracket{\phi_i(x) : i \in \omega}$ such that each $\phi_i \in \Form^{\mc{M}}$, if for each $i \in \omega$ there is $m \in M$ such that $(\mc{M}, S) \models \forall j < i (T(\phi_j(m)))$, then there is $m \in M$ such that for all $i \in \omega$, $(\mc{M}, S) \models T(\phi_i(m))$.

Let $\phi_j(x)$ be the formula given by $(a)_j  \equiv (j \in x)$. That is, since $(a)_j$ is the code of a sentence, $\phi_j(m)$ is evaluated as true in a satisfaction class $S$ if the sentence $(a)_j$ is evaluated as true and $j \in m$, or $(a)_j$ is evaluated as false and $j \not \in m$.

Let $i \in \omega$, and let $m \in M$ be such that for all $j < i$, $(a)_j \in A$ if and only if $j \in m$. Then, $$(\mc{M}, S) \models \forall j \leq i \: (T(\phi_j(m))).$$ Therefore there is $m$ such that for all $i \in \omega$, $(\mc{M}, S) \models T(\phi_i(m))$. In particular, for each $n \in \omega$, if $(a)_n \in A$, then $T( (a)_n)$ and therefore $n \in m$. Moreover, if $n \not \in m$, then $(\mc{M}, S) \models \lnot T((a)_n)$. By assumption this means $(a)_n \not \in A$.
\end{proof}

\section{Separable Cuts}\label{wsr-section}
In this section, we wish to examine the results of the previous section in case where we have $I \e M$ a cut. We examine some properties of separable cuts. We conclude this section by showing that a countable model is arithmetically saturated if and only if it has a disjunctively trivial expansion to a model of $\csm$.

\begin{prop}\label{sep-iff-wsr}Let $\mc{M} \models \PA$ be nonstandard and  $I \e M$. The following are equivalent.
\begin{enumerate}
    \item\label{sep-cut} $I$ is separable.
    \item\label{weakly-superrational} There is no $a \in M$ such that $I = \sup(\{ (a)_i : i \in \omega \} \cap I) = \inf(\{ (a)_i : i \in \omega \} \setminus I)$.
    \item\label{weakly-strong} For every $a \in M$, there is $d$ such that for all $i \in \omega$, $(a)_i \in I$ if and only if $(a)_i < d$.
\end{enumerate}
\end{prop}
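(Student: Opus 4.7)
The plan is to establish the cycle $(3)\Rightarrow(1)\Rightarrow(3)$ together with the direct equivalence $(2)\Leftrightarrow(3)$.

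For $(3)\Rightarrow(1)$, the easy direction, fix $a\in M$ and use $(3)$ to produce a threshold $d$. Bounded $\Delta_0$-comprehension in $\PA$ yields $c\in M$ whose $n$-th bit is on precisely when $n\leq\len(a)$ and $(a)_n<d$; then for each $n\in\omega$, $n\in c$ iff $(a)_n<d$ iff $(a)_n\in I$, which is separability.

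For $(2)\Leftrightarrow(3)$, given $a\in M$ let $A_0=\{(a)_i:i\in\omega\}\cap I$ and $A_1=\{(a)_i:i\in\omega\}\setminus I$. A witness $d$ for $(3)$ must satisfy $\alpha<d\leq\beta$ for all $\alpha\in A_0$ and $\beta\in A_1$. Because $A_0\subseteq I$ and $A_1\subseteq M\setminus I$, such a $d$ fails to exist exactly when $A_0$ is cofinal in $I$ \emph{and} $A_1$ is coinitial in $M\setminus I$ above $I$, which is precisely the condition $I=\sup A_0=\inf A_1$ in $(2)$.

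For the main direction $(1)\Rightarrow(3)$, the key tool is overspill. Given $a\in M$, separability yields $c\in M$ with $n\in c\iff(a)_n\in I$ for every $n\in\omega$. Consider the $\lpa$-formula (with parameters $a,c$)
\[\psi(K)\;:=\;\exists d\,\forall n<K\,\bigl((a)_n<d\leftrightarrow n\in c\bigr).\]
For each standard $K\in\omega$, the sets $\{(a)_n:n<K,\,n\in c\}\subseteq I$ and $\{(a)_n:n<K,\,n\notin c\}\subseteq M\setminus I$ are finite, so any $d$ strictly above the maximum of the first and at most the minimum of the second witnesses $\psi(K)$. By overspill, $\psi(K^*)$ holds at some nonstandard $K^*$, and the corresponding $d$ then satisfies $(a)_n<d\iff n\in c\iff(a)_n\in I$ for every $n\in\omega$, establishing $(3)$.

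The main obstacle is the subtle point in $(1)\Rightarrow(3)$: the bits of the separating $c$ are completely uncontrolled at nonstandard indices, so one cannot simply read off a threshold by quantifying over $\omega$ internally. Overspill circumvents this by producing an \emph{internal} threshold $K^*>\omega$ on which an internal $d$ works, automatically giving the required behavior on all of $\omega$.
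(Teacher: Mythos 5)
Your proof is correct and follows essentially the same route as the paper's: for the crucial direction $(1)\Rightarrow(3)$ you invoke overspill, which is precisely how the restricted-saturation argument cited in the paper is established for a recursive type of this form, so the two are the same idea. Your $(3)\Rightarrow(1)$ builds $c$ directly by bounded comprehension where the paper instead realizes a type---a minor simplification---and you spell out the $(2)\Leftrightarrow(3)$ equivalence which the paper dismisses as immediate from the definitions.
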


Compare $\eqref{weakly-strong}$ to the notion of \emph{strength}: a cut $I \e M$ is strong if for each $a$ there is $c > I$ such that whenever $i \in I$, $(a)_n \in I$ if and only if $(a)_n < c$. Clearly, condition $\eqref{weakly-strong}$ is equivalent to strength if $I = \omega$. 

\begin{proof}
 $\eqref{weakly-superrational} \iff \eqref{weakly-strong}$ follows immediately from definitions. 

We show $\eqref{sep-cut} \implies \eqref{weakly-strong}$: Suppose $I$ is separable and let $a \in M$. We show that there is $c \in M$ such that for each $n \in \omega$, $(a)_n \in I$ if and only if $(a)_n < c$. Since $I$ is separable, there is $c$ such that for each $n \in \omega$, $(a)_n \in I$ if and only if $n \in c$. Consider the type $$p(x) = \{ (a)_n < x \equiv n \in c : n \in \omega \}.$$ This type is finitely satisfiable, so (by restricted saturation of nonstandard models, see \cite[Corollary 1.11.4]{ks}) there is $c^\prime$ which satisfies $p(x)$.

Now we show $\eqref{weakly-strong} \implies \eqref{sep-cut}$. Let $a \in M$. There is $c$ such that $(a)_n \in I$ if and only if $(a)_n < c$. Consider the type $$p(x) = \{ (a)_n < c \equiv n \in x : n \in \omega \}.$$ This type is finitely satisfiable and therefore satisfied by some $c^\prime \in M$. Such a $c^\prime$ witnesses separability of $I$.
\end{proof}

By Theorem \ref{truth-closed-theorem}, Theorem \ref{separability-theorem}, and Proposition \ref{sep-iff-wsr}, then  $I$ is separable if and only if there is $S$ such that $(\mc{M}, S) \models \csm$ and $$I = \{ c : (\mc{M}, S) \models \lnot T(\bigvee\limits_c (0 = 1)) \}.$$ Suppose $(\mc{M}, S) \models \csm$. By Theorem \ref{separability-theorem} one has that $X = \{ c : (\mc{M}, S) \models \lnot T(\bigvee\limits_c (0 = 1)) \}$ is separable. Is it the case that $I(\IDC^{0=1}_S) = \{ x : \forall c < x \: \lnot T(\bigvee\limits_c (0 = 1)) \}$ is also separable? Our next result shows that it is not always the case: if $I \e M$ has no least $\mathbb{Z}$-gap above it, then there is $S$ such that $(\mc{M}, S) \models \csm$ and $I(\IDC^{0=1}_S) = I$. Later, in Corollary \ref{i(x)-not-sep}, we see that if $\mc{M}$ is not arithmetically saturated, then such an $I$ need not be separable.

\begin{prop}\label{no-least-z-0=1}Let $\mc{M} \models \PA$ be countable and recursively saturated.
Suppose $I \e M$ has no least $\mathbb{Z}$-gap. Then there is $S$ such that $(\mc{M}, S) \models \csm$ and $$I = \{ x : \forall c < x\:  \lnot T(\bigvee\limits_c(0 = 1)) \}.$$
\end{prop}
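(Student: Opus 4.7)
The plan is to adapt the step-by-step construction from the proof of Theorem~\ref{truth-closed-theorem}: rather than fixing a separable target set $X$ in advance, we build $S$ while adaptively selecting witnesses $c_0, c_1, \ldots$ in distinct $\mathbb{Z}$-gaps above $I$. The no-least-$\mathbb{Z}$-gap hypothesis ensures that for each $y \in M \setminus I$ there is room to place such a $c$ in a $\mathbb{Z}$-gap strictly above $I$ and at or below $y$.

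Using countability of $\mc{M}$, fix enumerations $\{y_n : n \in \omega\} = M \setminus I$ and a cofinal increasing sequence $\{a_n : n \in \omega\} \subseteq I$. Inductively build $F_0 \subseteq F_1 \subseteq \cdots$ finitely generated with $\bigcup_i F_i = \Form^{\mc{M}}$, full satisfaction classes $S_i$ with $(\mc{M}, S_i)$ recursively saturated and $S_{i+1}\restriction F_i = S_i\restriction F_i$, and elements $c_0, c_1, \ldots$, such that at each stage $i$ the ``strong commitment''
\[
  S_i(\bigvee_x(0=1), \emptyset) = \top \iff x \in \bigcup_{k \leq i}\gap(c_k)
\]
holds for all $x \leq X_i$, where $X_i = \max\{x : \bigvee_x(0=1) \in F_i\}$. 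At stage $i+1$, having adjoined the next enumerated formula together with $\bigvee_{c_{i+1}}(0=1)$ to the generators, apply Lemma~\ref{base-for-cl-f-iterates} with $J_0 = \bigcup_{k \leq i+1}\gap(c_k)$ and $J_1 = ([0, X_{i+1}] \setminus J_0) \setminus \omega$ to obtain a partial satisfaction class consistent with the strong commitments. The disjointness $\gap(c_k) \cap [0, a_j] = \emptyset$ for all $j, k$ holds because $c_k - a_j$ is nonstandard (since $c_k > I$, $a_j \in I$, and $I$ is closed under successor). Extend the partial class to a full one via the Enayat--Visser construction, and invoke chronic resplendency of $(\mc{M}, S_i)$ to maintain recursive saturation.

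Setting $S = \bigcup_i S_i \restriction F_i$: for $x \in I$, $x \leq a_i$ for some $i$, so $T(\bigvee_x(0=1)) = \bot$ and $x \in \IDC^{0=1}_S$; for $y = y_n \in M \setminus I$, the witness $c_k \leq y_n$ chosen to serve $y_n$ satisfies $T(\bigvee_{c_k}(0=1)) = \top$, so $c_k \notin \IDC^{0=1}_S$, showing $[0, y_n] \not\subseteq \IDC^{0=1}_S$. Hence $I(\IDC^{0=1}_S) = I$.

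The principal obstacle is scheduling: the strong commitment forces $c_{i+1}$ into a $\mathbb{Z}$-gap strictly above $X_i$'s, yet $c_{i+1}$ must also lie below (or inside the gap of) the next unhandled $y_n$. By handling $y_n$'s in gaps close to $I$ before adding formulas that would grow $X$ past them, and using the no-least-$\mathbb{Z}$-gap hypothesis to always find fresh room for $c_{i+1}$ between $X_i$'s gap and $y_n$'s, every density requirement is discharged while $\bigcup_i F_i = \Form^{\mc{M}}$ is achieved.
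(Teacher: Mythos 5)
Your plan follows the paper's high-level strategy: a stepwise Enayat--Visser-plus-resplendency construction along a chain of finitely generated sets $F_0 \subseteq F_1 \subseteq \cdots$, placing ``witness'' disjunctions of $\mathbb{Z}$-gap lengths coinitial with $I$ from above, and using the absence of a least $\mathbb{Z}$-gap to keep finding fresh gaps. But the ``strong commitment'' you impose---that $S_i$ should decide $T(\bigvee_x(0=1))$ according to membership in $\bigcup_{k\le i}\gap(c_k)$ for \emph{all} $x \le X_i$, not merely for the $x$ with $\bigvee_x(0=1) \in F_i$---is both unattainable in general and unnecessary. This is also what the choice $J_1 = ([0, X_{i+1}] \setminus J_0) \setminus \omega$ signals: you are feeding the entire interval below $X_{i+1}$ into Lemma~\ref{base-for-cl-f-iterates}, so the partial class you build decides every disjunction length up to $X_{i+1}$, far beyond the finitely generated $F_{i+1}$.

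Here is why the strong commitment cannot be maintained when $\mc{M}$ is not arithmetically saturated. Once $X_i \ge c_0$, Theorem~\ref{separability-theorem} forces $\{x \le X_i : T(\bigvee_x(0=1))\} = \gap(c_0) \cup \cdots \cup \gap(c_{m_i})$ to be separable (from $M$, using Proposition~\ref{def-are-sep}). But a single $\mathbb{Z}$-gap $\gap(c_0)$ already fails to be separable if $\omega$ is not strong: take $a$ witnessing failure of strength of $\omega$ with all $(a)_n < c_0$, and let $(b)_n = c_0 - (a)_n$; then $(b)_n \in \gap(c_0)$ iff $(a)_n \in \omega$, so a code $c$ with $(b)_n \in \gap(c_0) \iff n \in c$ would yield a code for $\{n : (a)_n \in \omega\}$, contradicting the choice of $a$. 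Hence no full satisfaction class $S_i$ realizes your invariant, and the resplendency step has nothing to copy. (Relatedly, the recursive theory fed to resplendency cannot even express ``false on the whole interval minus finitely many gaps'' without universal $\omega$-schemes that need not survive the Enayat--Visser elementary extension.)

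The paper sidesteps this by committing \emph{only} to the finitely many $\mathbb{Z}$-gaps of disjunction lengths that actually lie in the finitely generated $F_i$: at stage $i$ everything in $F_i$ of length $\le d_i$ is declared false, one new witness gap $\gap(b_i)$ strictly between $d_{i+1}$ and $d_i$ is declared true, and the Enayat--Visser/resplendency extension is left free on all other lengths. The resulting set $\{x : T(\bigvee_x(0=1))\}$ is therefore generally much larger than $\bigcup_i \gap(b_i)$, and this slack is exactly what makes the separability requirement of Theorem~\ref{separability-theorem} satisfiable while still guaranteeing $I = \{x : \forall c < x\, \lnot T(\bigvee_c(0=1))\}$. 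If you weaken your invariant to constrain only lengths occurring in $F_i$ and replace $J_1$ by the complement of $J_0$ inside $\{x : \bigvee_x(0=1) \in F_{i+1}\}$, the argument aligns with the paper's.
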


\begin{proof}
First notice that for any $c < d$ in different $\mathbb{Z}$-gaps, for any $a \in M$, there is $b$ such that $c < b < d$ and $b \not \in \{ (a)_i : i \in \omega \}$. To see this, notice that if $a, c,$ and $d$ are as above, by recursive saturation the type $$p(x) = \{ c < x < d \} \cup \{ (a)_i \neq x : i \in \omega \}$$ is realized in $M$. In fact, one can ensure that the $\mathbb{Z}$-gap of such a $b$ is disjoint from $c$, $d$, and $\{ (a)_i : i \in \omega \}$.

Now we show how to construct the required satisfaction class. Fix a sequence $d_0 > d_1 > \ldots$ such that $d_{i+1}$ is not in the same $\mathbb{Z}$-gap as $d_i$ and $\inf(\{ d_i : i \in \omega \}) = I$. We proceed similarly to Theorem \ref{truth-closed-theorem}: we build sequences $b_0 > b_1 > \ldots$, $F_0 \subseteq F_1 \subseteq \ldots$ and $S_0, S_1, \ldots$ such that:
\begin{itemize}
    \item for each $i \in \omega$, $d_{i+1} < b_i < d_i$ and $b_i$ is in a different $\mathbb{Z}$-gap from $d_i$ and $d_{i+1}$,
    \item each $F_i$ is a finitely generated set of formulas such that $\cup F_i = \Form^{\mc{M}}$,
    \item each $S_i$ is a full satisfaction class and $(\mc{M}, S_i)$ is recursively saturated,
    \item $S_{i+1} \restriction F_i = S_{i} \restriction F_i$,
    \item $\bigvee\limits_{d_i} (0 = 1) \in F_i$ and whenever $\bigvee\limits_c (0 = 1) \in F_i$ and $c \leq d_i$, $\anglebracket{\bigvee\limits_{c} (0 = 1), \emptyset} \not \in S_{i}$, and 
    \item $\bigvee\limits_{b_i} (0 = 1) \in F_{i+1} \setminus F_i$ and $\anglebracket{\bigvee\limits_{b_i} (0 = 1), \emptyset} \in S_{i+1}$.
\end{itemize}

Given such a sequence, let $S = \cup (S_{i} \restriction F_i)$. Then $S$ is the required full satisfaction class. To see this, suppose $J = \{ x : \forall c < x \: \lnot T(\bigvee\limits_c (0 = 1)) \}$. Notice that $(\mc{M}, S) \models T(\bigvee\limits_{b_i} (0 = 1))$, so for each $x \in J$ and $i \in \omega$, $x < b_i$; since $\inf(\{ b_i : i \in \omega \}) = I$, we have $J \subseteq I$. Conversely, let $d \in I$. For each $c < d$, there is $i$ such that $\bigvee\limits_{c} (0 = 1) \in F_i$. Then $c < d_i$, so $\anglebracket{\bigvee\limits_c (0 = 1), \emptyset} \not \in S_{i}$, and hence $\anglebracket{\bigvee\limits_c (0 = 1), \emptyset} \not \in S$.

We proceed to the construction. Suppose $F_i$ and $S_{i}$ has been constructed satisfying the above. Since $F_i$ is finitely generated, there is $a$ coding the lengths of disjunctions of $(0 = 1)$ in $F_i$. By recursive saturation, there is $b_i$ such that $d_{i+1} < b_i < d_i$ and $b_i \not \in \{ (a)_i : i \in \omega \}$; moreover, we ensure that the $\mathbb{Z}$-gap of $b_i$ is disjoint from $d_i$, $d_{i+1}$, and $\{ (a)_i : i \in \omega \}$. Let $F_{i+1}$ be generated by $F_i$, $\bigvee\limits_{b_i} (0 = 1)$, $\bigvee\limits_{d_{i+1}}(0 = 1)$, and the first formula $\phi \not \in F_i$ in some externally fixed enumeration of $\Form^{\mc{M}}$. Let $$F^\prime = F_i \cup (F_{i+1} \cap \{ \bigvee\limits_c (0 = 1) : c \in M \}).$$ Then $F^\prime$ is a closed set of formulas. Let $S^\prime = S_{i} \restriction F_i \cup \{ \anglebracket{\bigvee\limits_{b_i - n} (0 = 1), \emptyset} : n \in \omega \}$. In particular, $\anglebracket{\bigvee\limits_{d_{i+1}}(0 = 1), \emptyset} \not \in S^\prime$. $S^\prime$ is an $F^\prime$-satisfaction class, so by \cite[Lemma 3.1]{enayat-visser}, $\mc{M}$ has an elementary extension $\mc{N}$ carrying a $\Form^{\mc{M}}$-satisfaction class $S$ agreeing with $S^\prime$ on $F^\prime$. Therefore, the theory $Th$ asserting the following is consistent:
\begin{itemize}
    \item $S$ is a full satisfaction class,
    \item $S$ agrees with $S_{i}$ on formulas from $F_i$, 
    \item $\{ S(\bigvee\limits_{b_i - n}(0 = 1), \emptyset) : n \in \omega \}$, and
    \item $\{ \lnot S(\bigvee\limits_{c}(0 = 1), \emptyset) : c < d_{i+1}, \bigvee\limits_{c}(0 = 1) \in F_{i+1} \}$.
\end{itemize} By resplendency, $\mc{M}$ has a full satisfaction class $S$ satisfying $Th$; by chronic resplendency, we can assume $(\mc{M}, S)$ is recursively saturated. Let $S_{i+1} = S$ and continue. 
\end{proof}

To find some examples of separable cuts, we recall some definitions from \cite{kossak-omega}. Below, we let $\Def_0(a)$ be the set of elements of $\mc{M}$ which are $\Delta_0$-definable from $a$ in $\mc{M}$.

\begin{defn}[{\cite{kossak-omega}}] Let $\mc{M} \models \PA$ and let $I \e M$. 
	\begin{enumerate}
		\item $I$ is \emph{coded by $\omega$ from below} if there is $a \in M$ such that $I = \sup( \{ (a)_i : i \in \omega \})$. $I$ is \emph{coded by $\omega$ from above} if there is $a \in M$ such that $I = \inf(\{ (a)_i : i \in \omega \})$. $I$ is \textit{$\omega$-coded} if it is either coded by $\omega$ from below or from above.
		\item $I$ is \emph{$0$-superrational} if there is $a \in M$ such that either $\Def_0(a) \cap I$ is cofinal in $I$ and for all $b \in M$, $\Def_0(b) \setminus I$ is not coinitial in $M \setminus I$, or $\Def_0(a) \setminus I$ is coinitial in $M \setminus I$ and for all $b \in M$, $\Def_0(b) \cap I$ is not cofinal in $I$.
	\end{enumerate}
\end{defn}

\begin{thm}\label{equiv-sr}Let $\mc{M} \models \PA$ and $I \e M$. Then the following are equivalent:
	\begin{enumerate}
		\item \label{schmerl+omega} $I$ is $\omega$-coded and separable.
		\item \label{sr} $I$ is $0$-superrational.
	\end{enumerate}
\end{thm}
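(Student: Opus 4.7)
The plan is to combine two ingredients: Proposition~\ref{sep-iff-wsr} (which recasts separability as the ``weakly strong'' condition) and the following coding trick, which I will verify first. For any $b \in M$ there exists $c \in M$ with $\{(c)_n : n \in \omega\} \supseteq \Def_0(b)$. To establish this I apply overspill to the $\mc{L}_{\PA}$-formula asserting the existence of a sequence of length $k$ whose $n$-th component is the value of the $n$-th $\Delta_0$-formula applied to $b$ (or $0$ when not uniquely defined); this holds for every $k \in \omega$, so it holds for some nonstandard $K$, yielding the desired $c$.

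For $\eqref{schmerl+omega} \Rightarrow \eqref{sr}$, assume $I$ is $\omega$-coded and separable; without loss of generality $I$ is coded from below by some $a$. Since each $(a)_i$ with $i \in \omega$ lies in $\Def_0(a) \cap I$ and this set is cofinal in $I$, the first clause of the first disjunct of $0$-superrationality is witnessed by $a$. For the second clause, suppose for contradiction that some $b$ satisfies $\Def_0(b) \setminus I$ coinitial in $M \setminus I$. The coding trick applied to $b$ produces $c$ with $\{(c)_n\} \setminus I$ coinitial in $M \setminus I$. Interleaving the sequences coded by $a$ and $c$ into a single $d \in M$ then gives $\sup(\{(d)_n\} \cap I) = I = \inf(\{(d)_n\} \setminus I)$, contradicting Proposition~\ref{sep-iff-wsr}\eqref{weakly-superrational}. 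The case where $I$ is coded from above is handled symmetrically and yields the second disjunct.

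For $\eqref{sr} \Rightarrow \eqref{schmerl+omega}$, assume $I$ is $0$-superrational via some $a$; WLOG the first disjunct holds. To show separability, given $c \in M$ the hypothesis applied to $c$ supplies $d \in M \setminus I$ strictly below every element of $\Def_0(c) \setminus I$; since $(c)_n \in \Def_0(c)$ for every $n \in \omega$, we have $(c)_n \in I$ iff $(c)_n < d$, so Proposition~\ref{sep-iff-wsr}\eqref{weakly-strong} delivers separability. To show $\omega$-codedness, apply the coding trick to $a$ to obtain $a'$ with $\{(a')_n\} \supseteq \Def_0(a)$, use separability to find $d$ with $(a')_n \in I$ iff $(a')_n < d$, and let $a'' \in M$ code the sequence $(a'')_n = (a')_n$ if $(a')_n < d$, else $0$. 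Then $\{(a'')_n\} \subseteq I$ contains $\Def_0(a) \cap I$, which is cofinal in $I$, so $I$ is coded from below by $a''$. The second disjunct of $0$-superrationality gives $\omega$-codedness from above by a symmetric construction that replaces elements below $d$ by $d$.

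The main obstacle is the coding trick: carefully justifying that an externally infinite set $\Def_0(b)$ can be enumerated inside a single element of $M$. This is a routine overspill argument given $\PA$'s sequence coding and the $\Sigma_1$-definability of $\Delta_0$-truth, but the setup must be precise. Everything else is bookkeeping with cofinality and coinitiality through the equivalences in Proposition~\ref{sep-iff-wsr}.
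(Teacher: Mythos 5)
Your proof is correct, and the $\eqref{schmerl+omega}\implies\eqref{sr}$ direction is essentially the paper's: the paper constructs the interleaved witness directly by realizing the recursive type $\{(x)_{2n}=(a)_n\}\cup\{(x)_{2n+1}=t_n(b)\}$ (where $t_n$ enumerates $\Delta_0$-definable Skolem functions), which is exactly your ``coding trick plus interleaving,'' packaged differently. Your separability argument in $\eqref{sr}\implies\eqref{schmerl+omega}$ also matches the paper's: apply the non-coinitiality clause of $0$-superrationality to $\Def_0(c)$ and use Proposition~\ref{sep-iff-wsr}. The one genuine departure is in the $\omega$-codedness half of $\eqref{sr}\implies\eqref{schmerl+omega}$: the paper simply cites Kossak's Proposition~6.2 to get that $0$-superrational implies $\omega$-coded, whereas you derive it directly by enumerating $\Def_0(a)$ via the coding trick, using the separability you have just established to find the threshold $d$, and then truncating (or symmetrically, bumping up) the sequence to land inside $I$ (respectively $M\setminus I$). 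This is a small but real gain: it makes the theorem self-contained modulo Proposition~\ref{sep-iff-wsr}, and it exposes that the coding trick and Proposition~\ref{sep-iff-wsr} together are all that is really needed. One should be a little careful in stating the overspill step precisely, since ``$(c)_n$ is the value of the $n$-th $\Delta_0$-definable function at $b$ or $0$ otherwise'' requires the $\Delta_1$ definition of $\Delta_0$-satisfaction, but as you note this is routine.
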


\begin{proof}
	$\eqref{schmerl+omega} \implies \eqref{sr}$: Suppose $I$ is $\omega$-coded, and let $a$ be such that $\sup(\{ (a)_i : i \in \omega \}) = I$ (the case in which $I$ is coded by $\omega$ from above is similar). Suppose also that $b \in M$ is such that $\Def_0(b) \setminus I$ is coinitial in $M \setminus I$. Then the following type is realized in $M$:
	\begin{align*}
	p(x) = &\{ (x)_{2n} = (a)_n : n \in \omega \} \\
	\cup &\{ (x)_{2n+1} = t_n(b) : n \in \omega \},
	\end{align*}
	where $\anglebracket{t_n : n \in \omega}$ is a recursive enumeration of all $\Delta_0$-definable Skolem functions. If $c$ realizes this type, then $\sup( \{ (c)_i : i \in \omega \} \cap I) = \inf(\{(c)_i : i \in \omega \} \setminus I) = I$, contradicting $\eqref{schmerl+omega}$.
	
	$\eqref{sr} \implies \eqref{schmerl+omega}$: \cite[Proposition 6.2]{kossak-omega} implies that if $I$ is $0$-superrational, then $I$ is $\omega$-coded. To see separability, notice that by $0$-superrationality, if $\Def_0(a) \cap I$ is cofinal in $I$, then $\Def_0(a) \setminus I$ is not coinitial in $M \setminus I$ (and vice versa).
\end{proof}

\cite[Theorem 6.5]{kossak-omega} states that $\omega$ is a strong cut if and only if every $\omega$-coded cut is $0$-superrational. Taken together with the above result, we see that if $\omega$ is not strong, then separable cuts are never $\omega$-coded. 

\begin{prop}\label{wsr-vs-least-z}For any $\mc{M} \models \PA$:
\begin{enumerate}
    \item\label{strength-least-z} If $\omega$ is a strong cut, then every cut $I$ which is $\omega$-coded is separable.
    \item\label{weak-least-z} If $\omega$ is not a strong cut, then every cut $I$ which is $\omega$-coded is not separable.
\end{enumerate}
\end{prop}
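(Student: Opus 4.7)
My strategy is to translate uniformly between a separator for $I$ on a sequence of values and a separator for $\omega$ (i.e., strength) on a sequence of indices. I handle the case where $I$ is $\omega$-coded from below, say $I = \sup\{(b)_j : j \in \omega\}$; the case coded from above follows by symmetry. A preliminary normalization simplifies matters: replacing $b$ with $b'$ defined by $(b')_j = \max\{(b)_k : k \leq j\} + j$ yields a strictly increasing sequence of the same (nonstandard) length, cofinal in $I$, and satisfying $(b')_j \in I$ iff $j \in \omega$. Indeed, $(b')_j \in I$ for $j \in \omega$ because it is a finite operation on elements of $I$; for $j \notin \omega$, $(b')_j$ strictly upper-bounds $\{(b')_k : k \in \omega\}$, a cofinal-in-$I$ set, so $(b')_j \notin I$. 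I write $b$ for $b'$ henceforth.

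For (1), assume $\omega$ is strong and fix a coded sequence $a$. Define the $\mc{M}$-definable function $g(x) = \min\{j < \len(b) : x \leq (b)_j\}$ (set to $\len(b)$ if no such $j$ exists), and consider the coded sequence $h = \langle g((a)_n) : n < \len(a) \rangle$. Since $(b)_j \in I$ precisely for $j \in \omega$ and $\{(b)_j : j \in \omega\}$ is cofinal in $I$, for every $n \in \omega$ we have $(a)_n \in I$ iff $(h)_n \in \omega$. Strength of $\omega$ applied to $h$ provides $c > \omega$ satisfying $(h)_n < c$ iff $(h)_n \in \omega$. Monotonicity of $b$ converts this to $(a)_n \leq (b)_{c-1}$ iff $(a)_n \in I$, so $d = (b)_{c-1} + 1$ works as the separator required by Proposition \ref{sep-iff-wsr}(3).

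For (2), assume $I$ is separable and fix $a \in M$. Define an interleaved sequence $a'$ by $(a')_{2n} = (a)_n$ and $(a')_{2n+1} = n$: the odd indices force $\{m \in \omega : (a')_m \in \omega\}$ to be cofinal in $\omega$, while the even indices preserve the in-$\omega$ status of $a$. Let $(e)_m = (b)_{(a')_m}$ (capping indices exceeding $\len(b)$, which one may assume does not occur for relevant $m$ by extending $b$). The normalization gives $(e)_m \in I$ iff $(a')_m \in \omega$. Separability of $I$ furnishes $d$ such that $(e)_m \in I$ iff $(e)_m < d$ for every $m \in \omega$. The key observation is that $d \notin I$: if $d \in I$, then for odd $m = 2n + 1$ we have $(e)_m = (b)_n$, which is cofinal in $I$, so some $(e)_m \in I$ satisfies $(e)_m \geq d$, contradicting the separator. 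Consequently $(b)_j < d$ for every $j \in \omega$, so $k = \min\{j : (b)_j \geq d\}$ lies above $\omega$. Strict monotonicity of $b$ gives $(a')_m < k$ iff $(a')_m \in \omega$, and restricting to even $m = 2n$ yields $(a)_n < k$ iff $(a)_n \in \omega$; since $k > \omega$, this witnesses strength of $\omega$ for $a$.

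The delicate point is ensuring that $k$ is above $\omega$ in (2); without the interleaving modification, the separator $d$ for $e$ could lie in $I$ and produce $k \in \omega$, which is useless for strength. The interleaving injects a cofinal-in-$\omega$ family of standard indices into $a'$, and this cofinality is precisely what pushes $d$ out of $I$ and hence $k$ out of $\omega$. For the coded-from-above case, one constructs a strictly decreasing normalization of $b$ using the recursively defined subsequence $j_0 = 0$, $j_{k+1} = \min\{i : (b)_i < (b)_{j_k}\}$, extended by overspill to nonstandard length, satisfying $(b)_j \in I$ iff $j \notin \omega$; the remainder of the argument is dual.
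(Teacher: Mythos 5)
Your proof is correct, but it is organized quite differently from the paper's. For part (1), the paper simply cites \cite[Theorem 6.5 $(a)\implies(c)$]{kossak-omega}, whereas you give a self-contained argument: after normalizing the $\omega$-coding sequence $b$ so that $(b)_j\in I$ iff $j\in\omega$, you push a given sequence $a$ \emph{backwards} through $b$ (via $g(x)=\min\{j:x\leq(b)_j\}$), apply strength of $\omega$ to the resulting sequence of indices, and convert the strength witness back into a separator for $I$. For part (2), the paper argues in the forward direction: starting from a witness to the failure of strength and a coding sequence $c$ for $I$, it builds the composition $(b)_i=(c)_{(a)_i}$ and checks directly that it violates the characterization in Proposition~\ref{sep-iff-wsr}\eqref{weakly-superrational}. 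You instead prove the contrapositive -- separability of an $\omega$-coded $I$ implies strength of $\omega$ -- by pushing an arbitrary $a$ \emph{forwards} through $b$, and the interleaving trick ($(a')_{2n+1}=n$) is the extra device needed in this direction to force the separator $d$ for the composed sequence to land above $I$; without it, $d$ could be standard-sized and the pullback index $k$ would not witness strength. Both routes hinge on the same underlying idea (compose with the $\omega$-coding sequence to transfer a separator between $I$ and $\omega$), so the structural difference is real but modest; the payoff of your version is that part (1) is elementary and not delegated to an external reference, at the cost of the extra normalization and interleaving bookkeeping.
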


\begin{proof}
$\eqref{strength-least-z}$ is due to \cite[Theorem 6.5 $(a) \implies (c)$]{kossak-omega}. We show $\eqref{weak-least-z}$. Suppose $\omega$ is not strong. There is $a$ such that $\inf(\{ (a)_i : i \in \omega \} \setminus \omega) = \sup(\{ (a)_i : i \in \omega \} \cap \omega) = \omega$.

If $I \e M$ is a cut which is $\omega$-coded from above, then there is $c > I$ such that $I = \inf(\{ (c)_n : n \in \omega \})$. For simplicity assume that the sequence coded by $c$ is a strictly decreasing and its domain consists of all elements smaller than a nonstandard element $d$. Let $b$ code the sequence defined by $(b)_i = (c)_{(a)_i}$. We claim that $b$ witnesses the failure of separability of $I$.

Indeed, $(c)_{(a)_i} \in I$ if and only if $(c)_{(a)_i} < (c)_n$ for each standard $n$ if and only if $(a)_i > \omega$. Since the set $\{ (a)_i : i \in \omega \} \setminus \omega$ is coinitial with $\omega$, then  $\{ (c)_{(a)_i} : i \in \omega \} \cap I$ is cofinal with $I$. Indeed, for any $x\in I$ there is a nonstandard number $y<d$ such that $x<(c)_y\in I$. However, by the proerties of $a$ there is also a standard number $i\in \omega$ such that $\omega<(a)_i<y$. Since $c$ is strictly decreasing, it follows that for any such $i$, $x<(c)_{(a)_i}\in I.$ Similarly, since $\{ (a)_i : i \in \omega \} \cap \omega$ is cofinal with $\omega$, then $\{ (c)_{(a)_i} : i \in \omega \} \setminus I$ is coinitial with $I$.

The case when $I$ is upward $\omega$-coded is treated similarly.
\end{proof}

\begin{cor}\label{i(x)-not-sep}
Suppose $\mc{M} \models \PA$ is countable, recursively saturated but not arithmetically saturated. Then there are separable sets $X$ such that $I(X)$ is not separable.
\end{cor}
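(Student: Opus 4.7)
The plan is to witness the claim with the cut $I=\omega$: I will produce a full satisfaction class $S$ whose set of ``false repeated disjunctions of $0=1$'' is separable even though the cut it determines (namely $\omega$) is not. The argument is a direct combination of the earlier results of this section.

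First, since $\mc{M}$ is recursively saturated but not arithmetically saturated, $\omega$ is not a strong cut of $\mc{M}$. Since $\omega$ is trivially $\omega$-coded from below (e.g.\ by the sequence $(a)_i = i$), Proposition~\ref{wsr-vs-least-z}(2) gives that $\omega$ is not separable.

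Next, I would verify that $\omega$ has no least $\mathbb{Z}$-gap above it in any nonstandard model: for any infinite $a \in M$, $\lfloor\sqrt{a}\rfloor$ is still infinite while $a - \lfloor\sqrt{a}\rfloor$ is also infinite, so $\lfloor\sqrt{a}\rfloor$ lies in a strictly smaller $\mathbb{Z}$-gap, still above $\omega$. Hence Proposition~\ref{no-least-z-0=1} applies with $I = \omega$ and yields an $S$ with $(\mc{M},S) \models \csm$ and
\[ \omega = \bigl\{ x \in M : \forall c < x \ \lnot T(\bigvee\limits_c (0=1)) \bigr\}. \]

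Setting $D = \{ \bigvee_c (0=1) : c \in M \}$ and $A = \{ \phi \in D : (\mc{M},S) \models T(\phi) \}$, Theorem~\ref{separability-theorem} shows that $A$ is separable from $D$. Since $F(c) = \bigvee_c (0=1)$ is an $\mc{M}$-definable surjection of $M$ onto $D$, Proposition~\ref{prop_closure_sep_preim} yields that $F^{-1}[A]$ is separable from $M$, and hence so is its complement $X = \{ c \in M : (\mc{M},S) \models \lnot T(\bigvee_c (0=1)) \}$. By the choice of $S$, $I(X) = \omega$, which is not separable, as required. The only delicate point is the verification that $\omega$ has no least $\mathbb{Z}$-gap above it, so that Proposition~\ref{no-least-z-0=1} can be applied to $I = \omega$; everything else is a direct combination of the cited results.
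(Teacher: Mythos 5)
Your proof is correct and takes essentially the same route as the paper: combine Proposition~\ref{no-least-z-0=1}, Theorem~\ref{separability-theorem}, and the non-separability of $\omega$-coded cuts when $\omega$ is not strong. The only difference is the witness cut — you take $I = \omega$ itself (verifying directly that it has no least $\mathbb{Z}$-gap above it and is $\omega$-coded), while the paper uses $I = \sup(\{c+n : n \in \omega\})$ for a nonstandard $c$; both choices satisfy the hypotheses and both yield a non-separable $I(X)$.
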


\begin{proof}
Let $c$ be nonstandard, and $I = \sup(\{ c + n : n \in \omega \})$. Then $I$ has no least $\mathbb{Z}$-gap above it, and so by Proposition \ref{no-least-z-0=1}, there is $S$ such that $(\mc{M}, S) \models \csm$ and $I = I(\IDC^{0=1}_S)$. Let $X = \IDC^{0=1}_S$. Then $X$ is separable by Theorem \ref{separability-theorem} and $I = I(X)$. Since $I$ is $\omega$-coded, by Proposition \ref{wsr-vs-least-z} $\eqref{weak-least-z}$, if $\omega$ is not a strong cut, then $I$ cannot be separable.
\end{proof}

Separable cuts always exist in recursively saturated models. We can in fact see more: every recursively saturated model $\mc{M}$ has a separable cut $I \e M$ which is not closed under addition. Moreover, $\mc{M}$ has separable cuts $I \e M$ that are closed under addition but not multiplication, and ones closed under multiplication but not exponentiation.

To see this, first notice that if $(\mc{M}, I)$ is recursively saturated and $I \e M$, then $I$ is separable. This follows directly from the equivalent definition of separability that says that for each $a$ there is $d$ such that for all $i \in \omega$, $(a)_i \in I$ iff $(a)_i < d$. Now let $I \e M$ be any cut not closed under addition. By resplendence, there is $J \e M$ such that $(\mc{M}, J)$ is recursively saturated and not closed under addition.

Again, notice that this proof generalizes to show that if $f$ and $g$ are increasing definable functions such that there is any cut $I \e M$ closed under $f$ but not $g$, then there is $I \e M$ that is separable and closed under $f$ but not $g$. Hence there are separable cuts which are closed under addition but not multiplication, and cuts which are closed under multiplication but not exponentiation.

\subsection{Arithmetic Saturation}

In \cite[Lemma 26]{cieslinski-lelyk-wcislo-dc}, we see that there exist \emph{disjunctively trivial} models: models $(\mc{M}, T) \models \ctm$ such that for all sequences $\anglebracket{\phi_i : i < c}$ of sentences such that $c > \omega$, $(\mc{M}, T) \models T(\bigvee\limits_{i < c} \phi_i)$. That is, models such that all disjunctions of nonstandard length are evaluated as true. In this part we see that disjunctive triviality implies arithmetic saturation.

\begin{defn}Let $(\mc{M}, S) \models \csm$ and $I \e M$. 
	\begin{enumerate}
		\item If, for every $c > I$ and every sequence of sentences (in the sense of $\mc{M}$) $\anglebracket{\phi_i : i < c}$, $(\mc{M}, S) \models T(\bigvee\limits_{i < c} \phi_i)$, then we say $(\mc{M}, S)$ is \emph{disjunctively trivial above $I$}. If $(\mc{M}, S)$ is disjunctively trivial above $\omega$, we simply say $(\mc{M}, S)$ is \emph{disjunctively trivial}.
		\item If, for every $c \in I$ and every sequence of sentences (in the sense of $\mc{M}$) $\anglebracket{\phi_i : i < c}$, $(\mc{M}, S) \models T(\bigvee\limits_{i < c} \phi_i) \equiv \exists i < c \: T(\phi_i)$, we say that $(\mc{M}, S)$ is \emph{disjunctively correct on $I$}.
	\end{enumerate}
\end{defn}

\begin{cor}\label{arith-sat}Suppose $(\mc{M}, S) \models \csm$ and $I \e M$. If $(\mc{M}, S)$ is disjunctively trivial above $I$ and disjunctively correct on $I$, then $I$ is separable. In particular, if $(\mc{M}, S)$ is disjunctively trivial above $\omega$, then $\mc{M}$ is arithmetically saturated. Conversely, if $\mc{M}$ is arithmetically saturated, there is $S$ such that $(\mc{M}, S) \models \csm$ and is disjunctively trivial above $\omega$.
\end{cor}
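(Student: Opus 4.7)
My plan is to handle the three pieces of the corollary in turn: separability of $I$, the ``in particular'' consequence at $\omega$, and the converse via a cited construction.

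For separability of $I$, I will apply Theorem \ref{separability-theorem} at the $\mc{M}$-definable set $D = \{\bigvee_{i<c}(0=1) : c \in M\}$, viewed as the image of the definable injection $f(c) := \bigvee_{i<c}(0=1)$. Letting $A = \{\phi \in D : (\mc{M},S) \models T(\phi)\}$, Theorem \ref{separability-theorem} gives that $A$ is separable from $D$. The hypotheses pin down $A$ exactly: for $c \in I$, disjunctive correctness on $I$ combined with $\lnot T(0=1)$ forces $f(c) \notin A$, while for $c > I$, disjunctive triviality above $I$ forces $f(c) \in A$. Thus $f^{-1}[A] = M \setminus I$, and Proposition \ref{prop_closure_sep_preim} yields that $M \setminus I$ (equivalently $I$) is separable.

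For the ``in particular'' claim, CS2 together with external induction on standard lengths gives that any model of $\csm$ is automatically disjunctively correct on $\omega$. So if $(\mc{M},S)$ is disjunctively trivial above $\omega$, the hypotheses of the first claim hold at $I = \omega$, hence $\omega$ is separable. By Proposition \ref{sep-iff-wsr} \eqref{weakly-strong} applied to $I = \omega$, this is exactly the strength of $\omega$. Lachlan's theorem then yields that $\mc{M}$, carrying a satisfaction class, is recursively saturated; combined with strength of $\omega$, this gives arithmetic saturation of $\mc{M}$ (under the ambient countability hypothesis).

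For the converse, I would cite \cite[Lemma 26]{cieslinski-lelyk-wcislo-dc}, where a disjunctively trivial satisfaction class is constructed on an elementary extension of any countable $\mc{M} \models \PA$. The hypothesis of arithmetic saturation allows that construction to be carried out directly on $\mc{M}$: in an Enayat--Visser-style enumeration, at each stage $i$ one builds a finitely generated closed $F_i \subseteq \Form^{\mc{M}}$ with an $F_i$-satisfaction class $S_i$, and whenever the newly introduced formulas include a disjunction of nonstandard length, one declares it true by picking a consistent true disjunct, invoking chronic resplendency (\cite[Theorem 1.9.3]{ks}) to extend. The main technical obstacle is maintaining consistency at each stage while forcing all nonstandard-length disjunctions to be true; strength of $\omega$ (equivalent to separability of $\omega$ by Proposition \ref{sep-iff-wsr}) is precisely what lets one separate the standard-length constraints, which are rigidly determined by compositionality, from the nonstandard ones at each finite stage.
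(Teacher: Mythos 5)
Your proof follows the same structure and relies on the same key ingredients as the paper's: Theorem~\ref{separability-theorem} applied to $D=\{\bigvee_{i<c}(0=1):c\in M\}$ for separability, the equivalence (via Proposition~\ref{sep-iff-wsr} and the definition) between arithmetic saturation and strength of $\omega$ in a recursively saturated model, and an Enayat--Visser back-and-forth using \cite[Lemma 26]{cieslinski-lelyk-wcislo-dc} plus resplendence for the converse. Your identification of strength as the mechanism that lets one separate standard from nonstandard disjunction lengths in each finitely generated $F_{i+1}$ via a single $d>\omega$ is exactly the role it plays in the paper's argument.

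One phrase in your converse direction is wrong and worth flagging: you say that at each stage ``one declares it true by picking a consistent true disjunct.'' That describes disjunctive \emph{correctness} (indeed, the $\DC$-in half of it), not disjunctive triviality, and it cannot be what happens: for instance $\bigvee_{i<c}(0\neq 0)$ with $c>\omega$ has no true disjunct, yet it must be assigned true. The point of \cite[Lemma 26]{cieslinski-lelyk-wcislo-dc} is precisely that all disjunctions of length $>d$ can be declared true without providing any witnessing disjunct, because compositionality only constrains the members of the finitely generated $F_{i+1}$, and the chain of iterated binary disjunctions between $\bigvee_{i<c}\phi_i$ and the individual $\phi_j$ falls outside $F_{i+1}$. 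You do get the consistency mechanism right a sentence later, so this reads as a slip rather than a conceptual error, but as written it contradicts the very property you are constructing. A smaller point: your parenthetical ``(under the ambient countability hypothesis)'' is misplaced --- the forward implication (disjunctive triviality above $\omega$ implies arithmetic saturation) does not need countability, since Lachlan's theorem and the definition of arithmetic saturation are countability-free; it is the converse, via resplendence, that uses countability.
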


\begin{proof}
If $(\mc{M}, S) \models \csm$ is disjunctively trivial above $I$ and correct on $I$, then $I = \{ c : (\mc{M}, S) \models \lnot T(\bigvee\limits_c (0 = 1)) \}$. Therefore $I$ is separable by Theorem \ref{separability-theorem}. If $I = \omega$, then (by Proposition \ref{sep-iff-wsr}) $\omega$ is a strong cut in $\mc{M}$ and therefore $\mc{M}$ is arithmetically saturated. Conversely, suppose $\mc{M}$ is arithmetically saturated. We construct sequences $F_0 \subseteq F_1 \ldots$ of finitely generated sets of formulas such that $\cup F_i = \Form^{\mc{M}}$ and full satisfaction classes $S_0, S_1, \ldots$. Suppose $S_i$ is a full satisfaction class such that $(\mc{M}, S_i)$ is recursively saturated and if $\phi \in F_i \cap \Sent^{\mc{M}}$ is disjunction of nonstandard length, then $S_i(\phi, \emptyset)$. 

Let $a$ code the lengths of all disjunctions in $F_{i+1}$. That is, suppose $(b)_n$ is the $n$-th element of $F_{i+1}$, and $(a)_n$ is the maximum $c$ such that there is a sequence $\anglebracket{\phi_j : j < c}$ such that $(b)_n = \bigvee\limits_{j < c} \phi_j$. Since $\omega$ is strong, there is $d > \omega$ such that for each $n \in \omega$, $(a)_n \in \omega$ if and only if $(a)_n < d$. By \cite[Lemma 26]{cieslinski-lelyk-wcislo-dc}, the theory $Th$ asserting the following is consistent:
\begin{itemize}
    \item $\textnormal{ElDiag}(\mc{M})$,
    \item $S_{i+1}$ is compositional for each $\phi \in F_{i+1}$,
    \item $\{ S_i(\phi, \alpha) \equiv S_{i+1}(\phi, \alpha) : \phi \in F_i \}$ for all assignments $\alpha$ of $\phi$, and
    \item $\{ S_{i+1}(\bigvee\limits_{j < c} \phi_j, \alpha) : \bigvee\limits_{j < c} \phi_j \in F_{i+1}$ and $c > d \}$ for all assignments $\alpha$ of $\phi$.
\end{itemize}
Since $Th$ is a consistent, recursive theory and $(\mc{M}, S_i)$ is recursively saturated, by resplendence, $(\mc{M}, S_i)$ has an expansion $(\mc{M}, S_i, S_{i+1}) \models Th$. Continue as before, obtaining $S = \cup S_i \restriction F_i$, a full satisfaction class which is disjunctively trivial.
\end{proof}


We observe that, for each $n$, there is an arithmetical sentence $\theta_n$:= ``There exists a $\Delta_n$ full model of $\csm$ which is disjunctively trivial above $\omega$". Here by "$\omega$" we mean the image of the canonical embedding of the ground model onto an initial segment of the model and a "full model" means a model with a satisfaction relation satisfying the usual Tarski's truth condition. Corollary below shows that each such sentence is false.

\begin{cor}
For every $n$, $\mathbb{N}\models \neg\theta_n$.
\end{cor}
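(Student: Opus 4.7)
The plan is to apply Corollary~\ref{arith-sat} to reduce the claim to the impossibility of an arithmetically saturated model of $\PA$ being $\Delta_n$-definable in $\mathbb{N}$, and to derive this impossibility by trapping the standard system $\SSy(\mc{M})$ between two incompatible complexity bounds. Assume for contradiction that $\mathbb{N}\models\theta_n$: this yields a $\Delta_n$-definable (in $\mathbb{N}$) full model $(\mc{M},S)$ of $\csm$ that is disjunctively trivial above $\omega$. Since $S$ is compositional, Corollary~\ref{arith-sat} applies and gives that $\mc{M}$ is arithmetically saturated.

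I would first establish the upper bound $\SSy(\mc{M})\subseteq\Delta_n$. For every $a\in M$, the set $\{k\in\omega : \mc{M}\models \underline{k}\in a\}$ is $\Delta_n$-definable in $\mathbb{N}$: the formula ``$\underline{k}\in a$'' is $\Delta_0$ in $\PA$ (with both a bounded-existential and a bounded-universal representation), so its $\mc{M}$-evaluation lies in both $\Sigma_n$ and $\Pi_n$ in $\mathbb{N}$, using the $\Delta_n$-definable interpretation of $\mc{M}$'s operations.

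Next I would establish the matching lower bound: every arithmetical $A\subseteq\omega$ belongs to $\SSy(\mc{M})$. For such $A$, the parameter-free type $p(x)=\{\underline{k}\in x : k\in A\}\cup\{\underline{k}\notin x : k\notin A\}$ is arithmetically definable and finitely satisfiable in $\mc{M}$ via the usual $\PA$-coding of finite sets. Reading $p(x)$ as $p(x,0)$ to fit the paper's formulation of arithmetic saturation (arithmetical in the type of a parameter, taken here to be $0$), we obtain $a\in M$ realizing $p$, so $a$ codes $A$ and $A\in\SSy(\mc{M})$.

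Combining the two inclusions yields that every arithmetical subset of $\omega$ is $\Delta_n$, which contradicts the properness of the arithmetical hierarchy (for instance $\emptyset^{(n+1)}$ is arithmetical but not $\Delta_n$). The main obstacle will be the complexity bookkeeping for the upper bound, where one has to verify that $\Delta_0$-evaluation inside a $\Delta_n$-definable structure in $\mathbb{N}$ really stays $\Delta_n$: the point is that bounded quantification inside $\mc{M}$ becomes unbounded search in $\mathbb{N}$, so one needs both the $\exists$-bounded and the $\forall$-bounded formulations of ``$\underline{k}\in a$'' provided by $\PA$ to land in $\Sigma_n\cap\Pi_n$. Everything else (the paper's phrasing of arithmetic saturation for parameter-free types, finite satisfiability of $p$, and the properness of the arithmetical hierarchy) is routine.
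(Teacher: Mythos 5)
Your proof is correct and takes essentially the same route as the paper's, just with the final step unfolded. The paper compresses the contradiction into one line by citing the standard fact that arithmetic saturation of a countable recursively saturated $\mc{M}$ forces $(\mathbb{N},\SSy(\mc{M}))\models \ACA_0$, and then notes this is incompatible with $\SSy(\mc{M})$ consisting only of $\Delta_n$ sets; you prove directly from the paper's definition of arithmetic saturation that every arithmetical $A\subseteq\omega$ is coded in $\mc{M}$, which is precisely the consequence of $\SSy(\mc{M})\models\ACA_0$ needed for the contradiction (starting from $\emptyset\in\SSy(\mc{M})$). Both land on the same clash with the properness of the arithmetical hierarchy. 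One streamlining worth flagging for your upper bound: rather than tracking how the bounded quantifiers of the $\Delta_0$ formula ``$\underline{k}\in a$'' unravel into $\Sigma_n\cap\Pi_n$ searches over $\mathbb{N}$, you can appeal to the fact that $S$ (and indeed the external Tarskian satisfaction relation that comes with ``full model'') is itself part of the $\Delta_n$-definable data and agrees with genuine satisfaction on all \emph{standard} formulas; then $\{k : \mc{M}\models\underline{k}\in a\}$ is $\Delta_n$ by a single recursive reduction to $S$, with no further bookkeeping.
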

\begin{proof}
    Assume to the contrary and fix $n$ such that $\mathbb{N}\models \theta_n$. Fix a $\Delta_n$-definable model $\mathcal{M}:=(M,+,\cdot,S)\models \csm$ such that $\mathbb{N}\subseteq (M,+,\cdot)$ and $\mathcal{M}$ is disjunctively trivial above $\omega$. Then $(M,+,\cdot)$ is arithmetically saturated and consequently $(\mathbb{N}, \SSy(\mathcal{M}))\models \ACA_0$. However, each set in $\SSy(\mathcal{M})$ is $\Delta_n$ definable in $\mathbb{N}$, which is not possible.
\end{proof}

It follows from the above corollary that the construction of the disjunctively trivial model of $\ctm$ does not formalize in any true arithmetical theory, in particular it does not formalize in $\PA$.  Hence one cannot hope to interpret $\ctm+\textnormal{DC}-\textnormal{in}$ in $\PA$ by using the construction of 
a disjunctively trivial model internally in $\PA$. This is unlike in the case of a standard Enayat-Visser construction: \cite{elw} shows how to formalize the model theoretical argument from \cite{enayat-visser} in $\PA$ in order to conclude that $\ctm$ is feasibly reducible to $\PA$ and, in consequence, it does not have speed-up over $\PA$.


\section{Non-local Pathologies}\label{nonlocal-section}
In previous sections, we have considered a single, fixed $\theta$ and functions $F$ such that $F(x)$ is the $x$-th iterate of $\theta$ in some sense. We described sets defined by certain correctness properties with respect to this $\theta$. In other words, we explored ``local" pathologies (pathologies that are local to a fixed $\theta$). In this section we address several sets defined using non-local pathologies: for example, instead of fixing a $\theta$ and looking at the idempotent disjunctions of $\theta$, we look at all idempotent disjunctions (of any sentence). These sets can include $\IDC_S$, $\QC_S$, $\IDC^{\text{bin}}_S$, $\DNC_S$, among others.

\begin{rem}
Let us fix a model $(\mc{M}, S) \models \csm$ and consider $$\QC_S = \{ c : \forall \phi \in \Sent^{\mc{M}} T( (\forall x)^c \phi) \equiv T(\phi) \}.$$ Notice that $\QC_S$ is necessarily closed under addition, since if, for each $\phi$, $$T( (\forall x)^c \phi) \equiv T(\phi),$$ then let $\theta = (\forall x)^c \phi$, and so $$T( (\forall x)^c \theta) \equiv T(\theta) = T((\forall x)^c \phi) \equiv T(\phi).$$ Since $(\forall x)^c \theta = (\forall x)^{2c} \phi$, we conclude that $c \in \QC_S$ if and only if $2c \in \QC_S$. Suppose that $\QC_S$ is not a cut, and let $c_0 < c_1$ be such that $c_0 \notin \QC_S$ and $c_1 \in \QC_S$. Then there is $\phi$ such that $\lnot [T((\forall x)^{c_0} \phi) \equiv T(\phi)]$, but $T((\forall x)^{c_1} \phi) \equiv T(\phi)$. Then $c_1 \in \QC_S$, $2c_1 \in \QC_S$, but $c_0 + c_1 \notin \QC_S$, since $T( (\forall x)^{c_0 + c_1} \phi) \equiv T( (\forall x)^{c_0} \phi)$.

Let $I \e J_0 \e J_1 \e M$ be separable cuts closed under addition such that $c_0 \in J_0$ and $c_1 \in J_1 \setminus J_0$. Then $X = I \cup (J_1 \setminus J_0)$ is separable, but by the above argument, there can be no $S$ such that $(\mc{M}, S) \models \csm$ and $\QC_S = X$.
\end{rem}

This remark shows that there are complications that occur with sets defined by these non-local pathologies. For the remainder of this section, we look instead at the \emph{cuts} defined by these pathologies. 

We again generalize the setting to draw conclusions 
about $I(\IDC_S), I(\QC_S)$ and $I(\IDC^{\text{bin}}_S)$. To formalize this notion, we again look at finite propositional templates $\Phi(p, q)$ (recall this notion from the beginning of Section \ref{sep-section}). We restrict our attention to $\Phi$ with the following properties:
\begin{itemize}
    \item $\Phi(p, q)$ is not equivalent to $p$,
    \item the complexity of $\Phi(p,q)$ is non-zero,
    \item $q$ \textbf{must} appear in $\Phi(p, q)$,
    \item $p \wedge q \vdash \Phi(p, q)$, and
    \item $(\lnot p \wedge \lnot q) \vdash \lnot \Phi(p, q)$.
\end{itemize}
\begin{defn}
Suppose $\Phi$ has the above properties. Then $F : M \times \Sent \to \Sent$ defined as follows:
\begin{itemize}
    \item $F(0, \phi) = \phi$ for all $\phi \in \Sent^{\mc{M}}$, and
    \item $F(x + 1, \phi) = \Phi(\phi, F(x, \phi))$.
\end{itemize}
is called an \emph{idempotent sentential operator}. We say that $\Phi$ is a \emph{template for $F$}.
\end{defn}

Notice that for any $\theta$, the function $F(\cdot, \theta)$ is one to one.

\begin{lem}\label{accessible-vs-additive}
Let $\Phi$ be a template for $F$, and $F$ an idempotent sentential operator. If $p$ does not appear in $\Phi(p, q)$, then for all $x, y \in M$ and $\phi \in \Sent^{\mc{M}}$, $\mc{M} \models F(x + y, \phi) = F(x, F(y, \phi))$.
\end{lem}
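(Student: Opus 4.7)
The plan is to prove the identity by induction on $x$ inside $\mc{M}$, leveraging the hypothesis that $p$ does not appear in $\Phi(p,q)$ to conclude that substituting different sentences for $p$ yields the same formula. For fixed $y \in M$ and $\phi \in \Sent^{\mc{M}}$, the statement $\forall x\, (F(x+y,\phi) = F(x, F(y,\phi)))$ is an arithmetic statement in $x$ (with $F$ being $\mc{M}$-definable from the standard template $\Phi$), so induction in $\mc{M}$ is available.

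For the base case $x = 0$, both sides reduce to $F(y,\phi)$ directly from $F(0,\psi) = \psi$.

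For the inductive step, assume $F(x+y,\phi) = F(x, F(y,\phi))$. Unfolding the definition once,
\[
F((x+1)+y, \phi) \;=\; F((x+y)+1, \phi) \;=\; \Phi(\phi, F(x+y,\phi)).
\]
Here the critical use of the hypothesis that $p$ does not appear in $\Phi(p,q)$ is that the resulting coded formula depends only on what is substituted for $q$, not on what is substituted for $p$. Thus $\Phi(\phi, F(x+y,\phi)) = \Phi(F(y,\phi), F(x+y,\phi))$. Applying the inductive hypothesis to rewrite the second argument gives $\Phi(F(y,\phi), F(x, F(y,\phi)))$, which by definition equals $F(x+1, F(y,\phi))$, as required.

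The only subtle point is justifying the substitution identity $\Phi(\psi,\chi) = \Phi(\psi',\chi)$ uniformly in $\psi, \psi'$ from the fact that $p$ is absent in $\Phi(p,q)$; this is really an external fact about the standard template $\Phi$, applied pointwise via $\mc{M}$'s coding of substitution. Beyond that, the argument is a routine induction, so I do not expect any genuine obstacle.
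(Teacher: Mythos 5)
Your proof is correct and rests on the same key observation as the paper's: since $p$ does not appear in $\Phi(p,q)$, the coded result of substitution depends only on the $q$-slot, so $\Phi(\phi,\chi)=\Phi(F(y,\phi),\chi)$ for any $\chi$. The paper packages this observation by introducing $\Psi(q)$ with $\Phi(p,q)=\Psi(q)$ and the map $G(\phi)=\Psi(\phi)$, proving $F(x,\phi)=G^x(\phi)$ by $\mc{M}$-induction and then invoking $G^{x+y}=G^x\circ G^y$; you inline exactly the same induction directly on the identity $F(x+y,\phi)=F(x,F(y,\phi))$, which is a presentational difference only.
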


\begin{proof}
If $p$ does not appear in $\Phi(p, q)$, then there is a propositional function $\Psi(q)$ such that $\Phi(p, q) = \Psi(q)$. Let $G : \Sent^{\mc{M}} \to \Sent^{\mc{M}}$ be defined by $G(\phi) = \Psi(\phi)$. Then, $$F(x + 1, \phi) = \Psi(F(x, \phi)) = G(F(x, \phi)).$$ Since $F$ and $G$ are $\mc{M}$-definable, by induction, one observes that for all $x$, $F(x, \phi) = G^x(\phi)$, the $x$-th iterate of $G$. Therefore, $$F(x + y, \phi) = G^{x + y}(\phi) = G^x(G^y(\phi)) = G^x(F(y, \phi)) = F(x, F(y, \phi)).\mbox{\qedhere}$$
\end{proof}

As before, notice that if $p$ appears in $\Phi(p, q)$, then for each $\phi$ and $x$, $\phi \in \Cl(F(x, \phi))$. For this reason, if $p$ appears in $\Phi(p, q)$, we refer to $F$ as \emph{accessible}. If not, then because of Lemma \ref{accessible-vs-additive}, we say $F$ is \emph{additive}.

\begin{defn}
Let $F$ be an idempotent sentential operator.
\begin{itemize}
    \item $\theta$ is \emph{$F$-irreducible} if whenever $F(x, \phi) = \theta$, then $\phi = \theta$ and $x = 0$.
    \item The \emph{$F$-length} of $\phi$ is the maximum $x$ such that there is $\theta$ such that $F(x, \theta) = \phi$.
    \item The \emph{$F$-root} of $\phi$ is the unique $\theta$ such that $F(x, \theta) = \phi$, where $x$ is the $F$-length of $\phi$.
\end{itemize}
\end{defn}

\begin{rem}
By working through the possible truth tables for $\Phi(p, q)$, one notices that if $\Phi(p, q)$ has the required properties, then it is logically equivalent to one of the following propositional formulae:
\begin{itemize}
    \item $p \vee q$,
    \item $p \wedge q$, or
    \item $q$.
\end{itemize} We say that $\Phi(p, q)$ is \emph{$q$-monotone} if it is logically equivalent to either $p \vee q$ or to $q$.

Notice that if $\phi \in \Sent^{\mc{M}}$, then in each of these cases, one can show that if $(\mc{M}, S) \models \csm$, $(\mc{M}, S) \models \forall x T(F(x, \phi)) \equiv T(F(x + 1, \phi))$.
\end{rem}

\begin{lem}\label{roots}Let $F$ be an idempotent sentential operator. 
\begin{enumerate}
    \item\label{accessible}If $F$ is accessible, then for all $x, y > 0$, $\phi, \psi \in \Sent^{\mc{M}}$, if $F(x, \phi) = F(y, \psi)$, then $x = y$ and $\phi = \psi$. In other words, when $x > 0$, the $F$-root of $F(x, \phi)$ is $\phi$.
    \item\label{additive}If $F$ is additive, then the $F$-root of $\phi$ is $F$-irreducible. Moreover, for all $x, y > 0$, $\phi, \psi \in \Sent^{\mc{M}}$, if $F(x, \phi) = F(y, \psi)$, then the $F$-root of $\phi$ and $F$-root of $\psi$ are the same. 
\end{enumerate}
\end{lem}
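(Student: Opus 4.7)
The plan is to exploit unique readability of G\"{o}del-coded formulas in $\mc{M}$ together with the structural form of the template $\Phi$. In the accessible case of part (1), both $p$ and $q$ occur syntactically in $\Phi(p,q)$, so whenever $\Phi(a_1, b_1)$ and $\Phi(a_2, b_2)$ agree as formulas, inspection of any fixed occurrence of $p$ in $\Phi$ yields $a_1 = a_2$ and inspection of any fixed occurrence of $q$ yields $b_1 = b_2$. Applied to $F(x, \phi) = F(y, \psi)$ with $x, y > 0$, this unpacks as $\Phi(\phi, F(x-1, \phi)) = \Phi(\psi, F(y-1, \psi))$, immediately forcing $\phi = \psi$ and $F(x-1, \phi) = F(y-1, \phi)$. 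Iterating this stripping step by $\mc{M}$-induction on $\min(x,y)$ reduces the claim to the base case that $F(n, \phi) = \phi$ forces $n = 0$.

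The base case is handled by an internal length argument. Since $\Phi(p,q)$ has nonzero complexity, $\len(\Phi(a,b)) > \len(b)$ for any formulas $a, b$ in the sense of $\mc{M}$, so an induction in $\mc{M}$ on $n$ shows $\len(F(n, \phi)) > \len(\phi)$ whenever $n > 0$, which rules out $F(n,\phi) = \phi$.

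In the additive case of part (2), Lemma \ref{accessible-vs-additive} gives the key identity $F(x, F(y, \phi)) = F(x+y, \phi)$. For the first clause, let $\theta$ be the $F$-root of $\phi$ with $\phi = F(k, \theta)$ and $k$ maximal. If $\theta$ failed to be $F$-irreducible, there would exist $j > 0$ and $\sigma$ with $\theta = F(j, \sigma)$, whence $\phi = F(k+j, \sigma)$, contradicting maximality of $k$. For the second clause the template collapses to $\Phi(p,q) = \Psi(q)$, so from $F(x, \phi) = F(y, \psi)$ with WLOG $x \leq y$, stripping one outer $\Psi$ at a time using unique readability gives $\phi = F(y-x, \psi)$. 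If $\sigma$ is the $F$-root of $\psi$ with $\psi = F(m, \sigma)$, then $\phi = F(y - x + m, \sigma)$ with $\sigma$ already $F$-irreducible by the first clause; a uniqueness-of-decomposition argument (two representations $\phi = F(\ell_1, \tau_1) = F(\ell_2, \tau_2)$ with both $\tau_i$ $F$-irreducible collapse to $\tau_1 = \tau_2$ by the same stripping procedure combined with the fact that $F$-irreducible formulas cannot be nontrivial $F$-iterates) then identifies $\sigma$ as the $F$-root of $\phi$ as well.

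The main obstacle is arranging the unique-readability and length facts cleanly inside $\mc{M}$: the template $\Phi$ is a fixed standard propositional object but must be applied to possibly nonstandard formulas, so some care is needed to see that $\mc{M}$ correctly recognises the substitution operation $\Phi(\cdot, \cdot)$ and the positions of the $p$- and $q$-slots. Once these mechanical prerequisites are in place, both halves of the lemma reduce to bookkeeping via unique readability and Lemma \ref{accessible-vs-additive}.
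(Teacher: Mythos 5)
Your proof is correct and follows essentially the same path as the paper's: strip via unique readability of the substitution $\Phi(\cdot,\cdot)$ to deduce $\phi=\psi$ in the accessible case, handle the exponent by $\mc{M}$-induction (you make the paper's terse ``by induction'' concrete via a length argument, which is a fine way to do it), and for the additive case derive irreducibility of the root from maximality of the $F$-length and then propagate roots through $G$/$\Psi$-stripping. The only place you are slightly more explicit than the paper is the uniqueness-of-decomposition step at the end of part (2), which the paper also relies on implicitly when asserting that $\theta$ is the $F$-root of $\psi$.
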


\begin{proof}
First we show $\eqref{accessible}$. Suppose $F$ is accessible and $F(x, \phi) = F(y, \psi)$. If $x, y > 0$, then $F(x, \phi) = \Phi(\phi, F(x - 1, \phi))$, and $F(y, \psi) = \Phi(\psi, F(y - 1, \psi))$. Since $F$ is accessible, then $p$ appears as a leaf of the syntax tree of $\Phi(p, q)$. Since $\Phi(\phi, F(x - 1, \phi)) = \Phi(\psi, F(y - 1, \psi))$, we see that $\phi = \psi$. One shows by induction (in $\mc{M}$, since $F$ is $\mc{M}$-definable) that if $F(x, \phi) = F(y, \phi)$, then $x = y$.

Next we show $\eqref{additive}$. Suppose $F$ is additive and $\theta$ is the $F$-root of $\phi$. Then $F(x, \theta) = \phi$ and $x$ is the $F$-length of $\phi$. If $\theta$ is not $F$-irreducible, then there is $y > 0$ and $\psi$ such that $F(y, \psi) = \theta$. Then $$\phi = F(x, \theta) = F(x, F(y, \psi)) = F(x + y, \psi),$$ the last equality holding by additivity. Since $x + y > x$, this contradicts that $x$ is the $F$-length of $\phi$.

To show the ``moreover" part of $\eqref{additive}$, let $x, y > 0$, $\phi, \psi \in \Sent^{\mc{M}}$, and $F(x, \phi) = F(y, \psi)$. Define $G : \Sent^{\mc{M}} \to \Sent^{\mc{M}}$ by $G(\phi) = \Phi(\phi, \phi)$, so that $F(x, \phi) = G^x(\phi)$. Notice that $G$ is one to one. Since $G$ is one to one, then if $x = y$, $G^x(\phi) = G^y(\psi)$ implies, by induction in $\mc{M}$, that $\phi = \psi$. Suppose $x > y$. Then again by induction in $\mc{M}$, we have that $\mc{M} \models G^{x - y}(\phi) = \psi$. Let $\theta$ be the $F$-root of $\phi$, so that there is $a$ such that $G^a(\theta) = \phi$. Then $G^{a + (x - y)}(\theta) = \psi$, so $\theta$ is the $F$-root of $\psi$. 

\end{proof}

Consider the following examples of $\Phi(p, q)$:
\begin{itemize}
    \item $\Phi(p, q) = q \vee p$. In this case, $F(x, \phi) = \bigvee\limits_x \phi$.
    \item $\Phi(p, q) = q \wedge p$. In this case, $F(x, \phi) = \bigwedge\limits_x \phi$.
    \item $\Phi(p, q) = (\forall y) q$. Then $F(x, \phi) = \underbrace{\forall y \ldots \forall y}_{ x \textnormal{ times}} \phi$.
    \item $\Phi(p, q) = q \vee q$. Then $F(x, \phi) = \bigvee\limits^{\text{bin}}_x \phi$.
    \item $\Phi(p, q) = \lnot \lnot q$. Then $F(x, \phi) = (\lnot \lnot)^x \phi$.
\end{itemize}
The goal of this section is to characterize those cuts $I$ such that $$I = \{ c : \forall x \leq c \forall \phi \in \Sent^{\mc{M}} T(\phi) \equiv T(F(x, \phi)) \}.$$ This would allow us to characterize $I(\IDC_S)$, $I(\IDC^{\text{bin}}_S)$, and $I(\QC_S)$, among others. For $\IDC^{\text{bin}}_S$ and $\QC_S$ the relevant $F$ functions are additive, while for $\IDC_S$, $F$ is accessible. For the most part we will restrict our attention to $\Phi$ with syntactic depth 1. This covers most of the above cases, with the notable exception of $\lnot \lnot q$; we treat this case separately.

\begin{thm}
Let $(\mc{M}, S) \models \csm$ and suppose $F$ is an additive idempotent sentential operator. If $$I = \{ c : \forall x \leq c \forall \phi \in \Sent^{\mc{M}} T(\phi) \equiv T(F(x, \phi)) \},$$ then $I$ is closed under addition.
\end{thm}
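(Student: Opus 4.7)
The plan is to use the additivity identity from Lemma \ref{accessible-vs-additive}, namely $F(x+y, \phi) = F(x, F(y, \phi))$ for additive $F$, together with the fact that membership in $I$ quantifies universally over \emph{all} sentences $\phi \in \Sent^{\mc{M}}$ and not just a fixed one. The strategy is straightforward: given $a, b \in I$, I decompose any $x \leq a+b$ as $x_1 + x_2$ with $x_1 \leq a$, $x_2 \leq b$, and then chain the equivalences witnessed by membership of $a$ and $b$ in $I$, intertwined by additivity.

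In detail, I would fix $a, b \in I$ and, to verify $a + b \in I$, take an arbitrary $x \leq a + b$ and $\phi \in \Sent^{\mc{M}}$. Working in $\mc{M}$, I split $x = x_1 + x_2$ with $x_1 \leq a$ and $x_2 \leq b$ (take $x_1 = x$, $x_2 = 0$ if $x \leq a$; otherwise $x_1 = a$, $x_2 = x - a$). I then set $\psi := F(x_2, \phi)$, which again lies in $\Sent^{\mc{M}}$. Since $b \in I$ and $x_2 \leq b$, the defining clause applied to $\phi$ gives $T(\phi) \equiv T(\psi)$ in $(\mc{M}, S)$. Since $a \in I$ and $x_1 \leq a$, applying the defining clause now to the sentence $\psi$ (this is where the universal quantification over $\phi$ in the definition of $I$ is essential) yields $T(\psi) \equiv T(F(x_1, \psi))$. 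Finally, by additivity, $F(x_1, \psi) = F(x_1, F(x_2, \phi)) = F(x_1 + x_2, \phi) = F(x, \phi)$, so concatenating the two equivalences gives $T(\phi) \equiv T(F(x, \phi))$, as required.

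There is no real obstacle to this plan; the argument is essentially the unwinding of what additivity means for iterated operations. The only subtlety worth flagging is that it genuinely uses additivity rather than merely the general idempotent-operator framework: for an accessible $F$ one typically has $F(x_1, F(x_2, \phi)) \neq F(x_1+x_2, \phi)$ (e.g.\ iterating $\bigvee_{i \le c}$ produces a product-length disjunction rather than a sum-length one), so a genuinely different argument would be required for accessible operators such as the one yielding $\IDC_S$.
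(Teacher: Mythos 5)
Your proof is correct and takes essentially the same route as the paper's: decompose $x \leq a+b$ as $x_1 + x_2$ with each piece bounded by an element of $I$, then chain the two $T$-equivalences via the additivity identity $F(x_1, F(x_2, \phi)) = F(x_1+x_2, \phi)$. The paper's version first reduces to showing $a \in I \implies 2a \in I$ (which suffices because $I$ is by definition an initial segment), but the substance of the argument is identical.
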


\begin{proof}
Let $a \in I$. We show $2a \in I$. To see this, let $x \leq 2a$. If $x \leq a$, we are done. Otherwise, let $b = x - a$, so $x = a + b$ and $a, b \leq a$. Notice that for $\phi \in \Sent^{\mc{M}}$, we have $(\mc{M}, S) \models T(\phi) \equiv T(F(a, \phi))$ and $(\mc{M}, S) \models T(F(a, \phi)) \equiv T(F(b, F(a, \phi))$. By additivity, $F(b, F(a, \phi)) = F(a + b, \phi)$, and $x = a + b$, so we have $$(\mc{M}, S) \models T(\phi) \equiv T(F(x, \phi)).\qedhere$$
\end{proof}

Given a cut $I \e M$, we say $I$ is \emph{$F$-closed} if either $F$ is accessible or $F$ is additive and $I$ is closed under addition. We say $I$ \emph{has no least $F$-gap} if one of the following holds:
\begin{itemize}
    \item $F$ is accessible and if $x > I$, then there is a $y$ such that for each $n\in\omega$, $x - n >y> I$, or
    \item $F$ is additive and if $x > I$, there is a $y$ such that for each $n\in\omega$, $\lfloor \frac{x}{n} \rfloor > y>I$.
\end{itemize}

Our next main result shows that if $I$ is $F$ closed and either separable or has no least $F$-gap, then there is $S$ such that $(\mc{M}, S) \models \csm$ and $$I = \{ c : \forall x \leq c \forall \phi \in \Sent^{\mc{M}} T(\phi) \equiv T(F(x, \phi)) \}.$$ Our method of proof will be similar to our previous results: we build sequences of finitely generated sets $F_0 \subseteq F_1 \subseteq \ldots$ and full satisfaction classes $S_0, S_1, \ldots$ with particular properties. We first prove two important lemmas which we use in the inductive step of our construction.

For the rest of this section, we modify Definition \ref{sep} so that we say $\phi \triangleleft \psi$ if either $\phi$ is an immediate subformula of $\psi$ or $\phi$ is the $F$-root of $\psi$. Similarly modify the definitions of closed sets and finitely generated sets so that such sets are closed under $F$-roots. Note that by Lemma \ref{roots}, if $F$ is accessible, this changes nothing about finitely generated and/or closed sets, but this does have an effect for additive $F$.

\begin{defn}
Let $F$ be an idempotent sentential operator with template $\Phi(p, q)$. Let $I \e M$, $X \subseteq \Form^{\mc{M}}$ closed, and $S$ a full satisfaction class.
\begin{enumerate}
    \item $S$ is \emph{$F$-correct on $I$} for formulae in $X$ if for each $\phi \in X$ and $x \in M$, whenever $F(x, \phi) \in X$ and $x \in I$, then $S(F(x, \phi), \alpha)$ if and only if $S(\phi, \alpha)$ for all assignments $\alpha$ of $\phi$.
    \item $S$ is \emph{$F$-trivial above $I$} for formulae in $X$ if for each $\phi \in X$ and $x \in M$, whenever $F(x, \phi) \in X$ and $x > I$, then either $\Phi(p, q)$ is $q$-monotone and $S(F(x, \phi), \alpha)$ for all assignments $\alpha$, or $\Phi(p, q)$ is not $q$-monotone and $\lnot S(F(x, \phi), \alpha)$ for all assignments $\alpha$ of $\phi$.
\end{enumerate}
\end{defn}

\begin{lem}\label{nonlocal-wsr-lemma}
Let $\mc{M} \models \PA$ be countable and recursively saturated. Let $F$ be an idempotent sentential operator with template $\Phi(p, q)$, and assume $\Phi(p, q)$ has syntactic depth 1. Let $I \e M$ be $F$-closed and separable. Suppose $X \subseteq \Form^{\mc{M}}$ is finitely generated, $S$ is a full satisfaction class, $(\mc{M}, S)$ is recursively saturated, $S$ is $F$-correct on $I$ for sentences in $X$ and $F$-trivial above $I$ for sentences in $X$. Then for any finitely generated $X^\prime \supset X$, there is a full satisfaction class $S^\prime$ such that $(\mc{M}, S^\prime)$ is recursively saturated, $S^\prime \restriction X = S \restriction X$, $S^\prime$ is $F$-correct on $I$ for sentences in $X^\prime$, and $S^\prime$ is $F$-trivial above $I$ for sentences in $X^\prime$.
\end{lem}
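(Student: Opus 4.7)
The plan is to extend $S \restriction X$ to a partial compositional specification $S^*$ on $X'$ whose truth assignments on $X' \setminus X$ respect $F$-correctness on $I$ and $F$-triviality above $I$, then apply an Enayat--Visser style argument in the spirit of Theorem \ref{truth-closed-theorem} and Proposition \ref{no-least-z-0=1} to extend $S^*$ to the required full satisfaction class $S'$, using chronic resplendency of $(\mc{M}, S)$ to keep the resulting structure recursively saturated.

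Since $X' \setminus X$ is finitely generated, by Lemma \ref{roots} we list the finitely many new $F$-trunks $(\phi_1, y_1), \ldots, (\phi_k, y_k)$ (the $F$-roots and $F$-lengths of the new generators), and by closure of $X'$ we have $F(y, \phi_j) \in X'$ for every $y \in [0, y_j]$. The key structural observation, which uses the syntactic depth $1$ hypothesis on $\Phi$, is that the compositional axiom applied to $F(y+1, \phi_j) = \Phi(\phi_j, F(y, \phi_j))$ forces the values $T(F(y, \phi_j))$ to be constant within each $\mathbb{Z}$-gap of $y$-values (either because $T(\phi_j)$ is the absorbing element for $\Phi$ and all values are fixed, or because the step becomes $T(F(y{+}1,\phi_j)) \equiv T(F(y,\phi_j))$ and meta-induction propagates a common value within each gap). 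Because $I$ is a cut, each $\mathbb{Z}$-gap of $y$-values in $[0, y_j]$ lies entirely in $I$ or entirely above $I$, and so the candidate assignment ``$T(F(y, \phi_j)) = S(\phi_j,\emptyset)$ for $y$ in a gap contained in $I$, and $T(F(y, \phi_j))$ equals the trivial value for $y$ in a gap disjoint from $I$'' is compositionally consistent and agrees with $S \restriction X$.

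To make this into a specification usable by resplendency, we invoke countability of $\mc{M}$ to externally enumerate pivots $\langle g_n : n \in \omega \rangle$, one per $\mathbb{Z}$-gap of $\mc{M}$ meeting some $[0, y_j]$, and use recursive saturation of $\mc{M}$ to realize this enumeration by some $a \in M$ with $(a)_n = g_n$ for $n \in \omega$. Applying Proposition \ref{sep-iff-wsr}\eqref{weakly-strong} to $a$ yields $d \in M$ with $g_n \in I$ iff $g_n < d$ for every $n \in \omega$, which classifies each pivot, and hence each relevant gap, as lying in $I$ or above $I$. The partial satisfaction class $S^*$ extends $S \restriction X$ by assigning, for each $n \in \omega$ and each trunk $j$ with $g_n \in [0, y_j]$, the value of $S^*(F(g_n, \phi_j), \emptyset)$ to be $S(\phi_j, \emptyset)$ if $g_n < d$ and the trivial value otherwise; compositionality within each $\mathbb{Z}$-gap then propagates these pivot values to all $F$-iterates in $X'$. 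A straightforward multi-trunk generalization of Lemma \ref{base-for-cl-f-iterates} verifies that $S^*$ is a compositional $X'$-satisfaction class.

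By \cite[Lemma 3.1]{enayat-visser}, $S^*$ extends to a full satisfaction class on an elementary extension of $\mc{M}$, which shows the consistency of the recursive-in-$(\mc{M},S)$ theory $Th$ asserting that $S'$ is a full satisfaction class, that $S' \restriction X = S \restriction X$, and the $\omega$-many pivot specifications above. Chronic resplendency of $(\mc{M}, S)$ (cf.\ \cite[Theorem 1.9.3]{ks}) then yields $S'$ on $\mc{M}$ itself with $(\mc{M}, S, S') \models Th$ and $(\mc{M}, S, S')$ recursively saturated, so $(\mc{M}, S')$ is recursively saturated; the desired $F$-correctness on $I$ and $F$-triviality above $I$ for $X'$ follow by compositional propagation of the pivot specifications throughout each $\mathbb{Z}$-gap. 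The main obstacle is the consistency verification for $S^*$ in the presence of trunks whose $F$-root $\phi_j$ lies in $X$ with $S(\phi_j,\emptyset)$ non-absorbing while $y_j > I$: the syntactic depth $1$ hypothesis on $\Phi$ together with the cut structure of $I$ is exactly what guarantees that compositional constraints never link two distinct $\mathbb{Z}$-gaps, so the prescribed jump from $S(\phi_j,\emptyset)$ on gaps inside $I$ to the trivial value on gaps above $I$ remains coherent with $S \restriction X$.
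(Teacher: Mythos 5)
Your high-level strategy matches the paper's: seed truth values on the new $F$-iterates using separability, then lift via Enayat--Visser and chronic resplendency. But there are two genuine gaps.

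First, the step where you ``externally enumerate pivots $\langle g_n : n\in\omega\rangle$, one per $\mathbb{Z}$-gap of $\mc{M}$ meeting some $[0,y_j]$, and use recursive saturation of $\mc{M}$ to realize this enumeration by some $a\in M$'' does not work. Realizing $\{(x)_n = g_n : n\in\omega\}$ by recursive saturation would require that type to be recursive over finitely many parameters from $M$; an arbitrary external listing of $\mathbb{Z}$-gap representatives is not recursive and involves infinitely many parameters. The correct move, as in the paper, is to exploit that $X'$ is finitely generated, so $X'\cap\Sent^{\mc{M}}$ is $\mc{M}$-definable and already carries an internal (coded) enumeration. One fixes $a,b,c\in M$ coding the enumeration of $X'\cap\Sent^{\mc{M}}$ together with the $F$-roots $(b)_n$ and $F$-lengths $(a)_n$; then a single application of separability to $a$ yields the threshold $d$. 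No appeal to recursive saturation is needed at this stage.

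Second, the coherence claim at the heart of your argument is only asserted. You say that ``compositional constraints never link two distinct $\mathbb{Z}$-gaps'' and that your pivot-seeded $S^*$ is consistent by a ``straightforward multi-trunk generalization'' of Lemma \ref{base-for-cl-f-iterates}, but this is where the real work lies. For additive $F$, a sentence in $X'$ can be presented as $F(x,\phi)$ for many $x$ (only the $F$-root is unique, per Lemma \ref{roots}), so the constraints you impose at different pivots of the same formula must be shown compatible; and the seeded values must cohere with $S\restriction X$ when some $F(y',\phi)\in X$ with $\phi$ the same trunk. The paper handles this by modifying $\triangleleft$ to include $F$-roots (so that finitely generated sets are automatically closed under $F$-roots), by formulating $F$-correctness and $F$-triviality as recursive schemes in a theory $Th$, and by verifying consistency of each finite subtheory via an explicit well-founded induction on $\triangleleft$, with a case split between accessible and additive $F$ and between the $F$-correctness and $F$-triviality regimes. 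That verification is the content of the lemma; your sketch postpones it to a ``main obstacle'' paragraph without giving the argument.
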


\begin{proof}
Suppose we have $X, X^\prime$ and $S$ as given in the statement of the Lemma. Let $a, b, c$ code enumerations such that $\{ (c)_n : n \in \omega \}$ enumerates $\Sent^{\mc{M}} \cap X^\prime$, $(b)_n$ is the $F$-root of $(c)_n$, and $F((a)_n, (b)_n) = (c)_n$. By separability, there is $d$ such that for each $n \in \omega$, $(a)_n \in I$ if and only if $(a)_n < d$.

To obtain $S^\prime$, we proceed in two stages. First, we find an $X^\prime$-satisfaction class $S_1$ with the following properties:
\begin{itemize}
	\item $(\mc{M}, S_1)$ is recursively saturated,
	\item $S_1 \restriction X = S \restriction X$,
	\item if $\psi = F(x, \phi) \in X^\prime$, $\phi \in X^\prime$ and $x \in I$, then  $(\psi, \emptyset) \in S_1$ iff $(\phi, \emptyset) \in S_1$, and 
    \item if $\psi = F(x, \phi) \in X^\prime$, and $x > I$, then if $\Phi(p, q)$ is $q$-monotone, then $(\psi, \emptyset) \in S_1$, and if $\Phi(p, q)$ is not $q$-monotone, then $(\psi, \emptyset) \not \in S_1$.
\end{itemize} As before, by \cite[Lemma 3.1]{enayat-visser}, $\mc{M}$ has an elementary extension $\mc{N}$ with a full satisfaction class $S_N$ agreeing with $S_1$ on $X^\prime$. Moreover, if $U_c = \{ x : \anglebracket{c, x} \in S_1 \}$ for $c \in X^\prime$, then $(\mc{M}, U_c)_{c \in X^\prime}$ is  resplendent, so we can assume that there is a full satisfaction class $S^\prime$ on $\mc{M}$ itself and that $(\mc{M}, S^\prime])$ is recursively saturated.

To find such an $X^\prime$-satisfaction class $S_1$, we use an Enayat-Visser argument. Let $Th$ be the theory in the language $\lpa \cup \{ S, S_1 \}$ asserting the following:

\begin{enumerate}
	\item the elementary diagram of $\mc{M}$,
	\item $\Comp(\phi,\psi,\theta)[S_1/S]$, for all $\phi,\psi,\theta \in X^\prime$,
	\item $\{ S_1(\phi, \alpha) \equiv S(\phi, \alpha) : \phi \in X \}$ (preservation),
	\item $\{ S_1(F((a)_n, (b)_n), \emptyset) \equiv S_1((b)_n, \emptyset ) : n \in \omega, (a)_n < d \}$ ($F$-correctness), and one of the following:
	\item\label{triv} \begin{itemize}
	    \item if $\Phi(p, q)$ is $q$-monotone, then $$\{ S_1(F((a)_n, (b)_n), \emptyset ) : n \in \omega, (a)_n > d \},$$ or
	    \item if $\Phi(p, q)$ is not $q$-monotone, then $$\{ \lnot S_1(F((a)_n, (b)_n), \emptyset ) : n \in \omega, (a)_n > d \}.$$
	\end{itemize}
\end{enumerate}
We refer to either of the conditions in $\eqref{triv}$ as $F$-triviality.

Before we show that $Th$ is consistent, we first show that if $(\mc{M}, S, S_1) \models Th$ is recursively saturated, then $S_1$ satisfies the required properties. First, since $S_1$ is compositional for formulas in $X^\prime$, $S_1$ is an $X^\prime$-satisfaction class. Moreover, the preservation scheme implies that $S_1\restriction X = S \restriction X$. To show the other properties, suppose $\theta = F(x, \phi) \in X^\prime$ and $\phi \in X^\prime$. Then there are $n, m \in \omega$ such that $\theta = F((a)_n, (b)_n)$ and $\phi = F((a)_m, (b)_m)$. By Lemma \ref{roots}, if $F$ is accessible, then either $x = 0$ and $\theta = \phi$, or $x = (a)_n$ and $\phi = (b)_n$; so if $F$ is accessible, there is nothing to show. Suppose $F$ is additive. Therefore (by our hypothesis) $I$ is closed under addition. By Lemma \ref{roots}, $(b)_n = (b)_m$ and $(a)_n = (a)_m + x$. There are two cases to consider, corresponding to the $F$-correctness and $F$-triviality properties of $\theta$:

\noindent \underline{Case 1}: $x \in I$ ($F$-correctness): Since $I$ is closed under addition, $(a)_n \in I$ if and only if $(a)_m \in I$. By separability, therefore, $(a)_n < d$ if and only if $(a)_m < d$. If $(a)_n < d$, then by $F$-correctness we have $(\theta, \emptyset) \in S_1$ if and only if $ ((b)_n, \emptyset) \in S_1$ and $(\phi, \emptyset) \in S_1$ if and only if $((b)_n, \emptyset) \in S_1$. Therefore, $(\theta, \emptyset) \in S_1$ if and only if $(\phi, \emptyset) \in S_1$. If $(a)_n > d$, then by $F$-triviality we have either $(\theta, \emptyset) \in S_1$ and $(\phi, \emptyset) \in S_1$, or $(\theta, \emptyset) \notin S_1$ and $(\phi, \emptyset) \notin S_1$. Again we have $(\theta, \emptyset) \in S_1$ if and only if $(\phi, \emptyset) \in S_1$.

\noindent \underline{Case 2}: $x > I$ ($F$-triviality): In this case, $(a)_n > I$, and therefore $(a)_n > d$. By $F$-triviality, if $\Phi$ is $q$-monotone, we have $(\theta, \emptyset) \in S_1$, and if $\Phi$ is not $q$-monotone, we have $(\theta, \emptyset) \notin S_1$.

Now we return to showing that $Th$ is consistent. Let $T_0 \subseteq Th$ be a finite subtheory. Let $C$ be the set of formulas such that the instances of their compositionality, preservation, $F$-correctness and $F$-triviality appear in $T_0$. Then $C$ is finite, so the modified subformula relation, $\triangleleft$, is well-founded on $C$. We define $S$ inductively on this relation:

Suppose $\phi$ is minimal in $C$. If $\alpha$ is an assignment for $\phi$, we put $(\phi, \alpha) \in S_1$ if any of the following hold: 
\begin{enumerate}
	\item $\phi \in X$ and $(\phi, \alpha) \in S$,
        \item $\phi$ is atomic, $\alpha$ is an assignment for $\phi$ and $\mc{M} \models \phi[\alpha]$, or
	\item $\Phi(p, q)$ is $q$-monotone, $\phi = F((a)_n, (b)_n)$, $\alpha = \emptyset$ and $(a)_n > d$.
\end{enumerate} 
Define $\phi$ of higher rank using compositionality if possible. If it is not possible, meaning that no immediate subformula of $\phi$ is in $C$, then there must be $\psi \in C$ such that $\psi$ is the $F$-root of $\phi$. Let $\phi = F((a)_n, (b)_n)$, where $(b)_n = \psi$. In this case, put $(\phi, \alpha) \in S_1$ if either $(\psi, \alpha) \in S_1$ or $(a)_n > d$ and $\Phi$ is $q$-monotone. 

We show that $(\mc{M}, S, S_1) \models T_0$. Clearly, $(\mc{M}, S, S_1)$ satisfies the elementary diagram of $\mc{M}$, and by definition, \allowbreak $(\mc{M}, S, S_1)$ satisfies all compositional axioms in $T_0$. 

We show that $(\mc{M}, S, S_1)$ satisfies the preservation scheme. Suppose $\phi \in X$. Then if $\phi$ is minimal in $C$ in the subformula relation, then $S_1(\phi, \alpha)$ if and only if $S(\phi, \alpha)$ by construction. If $\phi$ is not minimal, then $S_1(\phi, \alpha)$ if and only if $S(\phi, \alpha)$ follows by compositionality along with $F$-correctness and $F$-triviality of $S$ on sentences from $X$.

Next we show $F$-triviality. Suppose $\phi = F((a)_n, (b)_n) \in C$ and $(a)_n > d$. We assume $\Phi(p, q)$ is $q$-monotone; the other case is similar. If $\phi$ is minimal in $C$, then by construction, $(\phi, \emptyset) \in S_1$. If $\phi$ is not minimal, then let $\psi = F((a)_n - 1, (b)_n)$. As $(a)_n > I$, $(a)_n - 1 > I$ as well, so $(a)_n - 1 > d$. If $\psi \in C$, then by induction, we have $(\psi, \emptyset) \in S_1$. By compositionality, $(\psi, \emptyset) \in S_1$ if and ony if $(\phi, \emptyset) \in S_1$, so $(\phi, \emptyset) \in S_1$. If $\psi \not \in C$, then it must be the case that $(b)_n \in C$, and by construction, $(\phi, \emptyset) \in S_1$ since $(a)_n > d$.

Lastly, we show the $F$-correctness scheme. Suppose $\phi = F((a)_n, (b)_n) \in C$, $(a)_n < d$, and $S_1(\phi, \emptyset) \equiv S_1((b)_n, \emptyset) \in T_0$. If $\phi \in X$, then $(b)_n \in X$, and $(\phi, \emptyset) \in S$ if and only if $((b)_n, \emptyset) \in S$. By preservation, the same holds with $S_1$ replacing $S$. Suppose $\phi \not \in X$. Let $\psi = F((a)_n - 1, (b)_n)$. If $\psi \in C$, then as $\psi$ and $(b)_n$ each have lower rank than $\phi$, we can assume $((b)_n, \emptyset) \in S_1$ if and only if $(\psi, \emptyset) \in S_1$. Then by compositionality, we have $S(\phi, \alpha) \equiv S(\psi, \alpha)$, so, $$(\phi, \emptyset) \in S_1 \iff (\psi, \emptyset) \in S_1 \iff ((b)_n, \emptyset) \in S_1.$$ If $\psi \not \in C$, then by our construction, $(\phi, \emptyset) \in S_1$ if and only if either $((b)_n, \emptyset) \in S_1$ or $(a)_n > d$ (and $\Phi$ is $q$-monotone). Since $(a)_n < d$, then $(\phi, \emptyset) \in S_1$ if and only if $((b)_n, \emptyset) \in S_1$.

Since $Th$ is consistent, there is a model $(\mc{M}^\prime, S^\prime, S_1^\prime) \models Th$. By resplendency of $(\mc{M}, S)$, $(\mc{M}, S)$ has an expansion $(\mc{M}, S, S_1) \models Th$. This $S_1$ is the required $X^\prime$-satisfaction class.
\end{proof}  

We shall now prove an analogous lemma with a different assumption about $I$: instead of separability we shall require that there is no least $F$-gap above $I$. In the proof we shall need one more notion, which we shall now define:

\begin{defn}\label{def_d_closure}
Let $\mc{M}\models \PA$, and let $F$ be an idempotent sentential operator. Assume that $F$ is additive.
    For $Z\subseteq \Form^{\mc{M}}$ and $d\in M$, let $Z_{d}$ be the set of those formulae of the form $F(c,\phi)$, for which there are $n\in\mathbb{N}$, $a\in M$, such that 
    \begin{itemize}
        \item $0<a-c<n\cdot d$,
        \item $F(a,\phi)\in Z$,
        \item $\phi$ is the root of $F(a,\phi)$.
    \end{itemize}
    For uniformity of our proofs, when $F$ is accessible, we take $Z_d$ to be just the closure of $Z$ (under immediate subformulae and taking $F$-roots).
\end{defn}

\begin{prop}\label{prop_idempot_d_closure}
Let $\mc{M}\models \PA$, $F$ an idempotent sentential operator, and $Z\subseteq \Form^{\mc{M}}$. Then, for every $d\in M$ $(Z_d)_d\subseteq Z_d$.
\end{prop}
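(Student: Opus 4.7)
The plan is to split the argument according to whether $F$ is accessible or additive, since Definition \ref{def_d_closure} bifurcates along this distinction. In the accessible case, $Z_d$ is simply the closure of $Z$ under immediate subformulae and $F$-roots, and closure operators are always idempotent, so $(Z_d)_d = Z_d$ and the inclusion is immediate.

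The substantive case is the additive one. Suppose $\theta \in (Z_d)_d$. Write $\theta = F(c,\phi)$ together with witnesses $n_1 \in \mathbb{N}$ and $a_1 \in M$ such that $0 < a_1 - c < n_1 \cdot d$, $F(a_1,\phi) \in Z_d$, and $\phi$ is the root of $F(a_1,\phi)$. In particular $\phi$ is $F$-irreducible. Since $F(a_1,\phi)$ is itself in $Z_d$, we can apply the definition to it, obtaining $n_2 \in \mathbb{N}$, $a_2 \in M$, and some $\phi'$ with $F(a_1,\phi) = F(a_1,\phi)$ decomposed as a root-and-exponent pair $(\phi', a_1')$ satisfying $0 < a_2 - a_1' < n_2 \cdot d$ and $F(a_2,\phi') \in Z$. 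Here I will invoke Lemma \ref{roots}(\ref{additive}): for additive $F$, the root of a formula is unique, so since $\phi$ is already the root of $F(a_1,\phi)$, the canonical decomposition is $\phi' = \phi$ and $a_1' = a_1$.

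With that pinned down, the two pieces of data combine additively: $a_2 > c$ and
\[a_2 - c = (a_2 - a_1) + (a_1 - c) < n_2 \cdot d + n_1 \cdot d = (n_1 + n_2) \cdot d,\]
while $F(a_2,\phi) \in Z$ and $\phi$ is still the root of $F(a_2,\phi)$ by irreducibility. Hence $(n_1 + n_2, a_2)$ witnesses that $\theta = F(c,\phi) \in Z_d$, as required.

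The only potentially tricky point is the step where one passes from ``$F(a_1,\phi) \in Z_d$'' to the existence of witnesses that actually use the \emph{same} root $\phi$. This is exactly where Lemma \ref{roots}(\ref{additive}) does the work, since without the uniqueness of the root one could not ensure that the two layers of the definition chain together; everything else is a straightforward combination of the bounds $a_1 - c < n_1 d$ and $a_2 - a_1 < n_2 d$ using closure of $\mathbb{N}$ under addition.
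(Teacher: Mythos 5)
Your proof is correct and follows the same route as the paper's: dispose of the accessible case by idempotency of closure, and in the additive case chain the two layers of the definition of $Z_d$ together by using Lemma~\ref{roots}(\ref{additive}) to identify the roots (and hence the exponents) across the two decompositions, after which the bounds combine additively. The only differences are cosmetic (variable names $n_1,a_1,n_2,a_2,a_1'$ in place of $n,a,n',a',c'$), so this is the paper's argument.
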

\begin{proof}
     This is clear if $F$ is accessible, so assume $F$ is additive. Fix an arbitrary $c,\phi$ such that $F(c,\phi)\in (Z_{d})_{d}$. Choose $a,n$ such that $F(a,\phi)\in Z_{d}$ and $0<a-c< n\cdot d$. By definition it follows that for some $c',$ $n'$,  $\phi'$ and $a'$, $F(a,\phi) = F(c',\phi')$, $F(a',\phi')\in Z$ and $0<a'-c'<n'\cdot d$. Since $F$ is additive this means that $\phi=\phi'$ (since both of them are roots) and $a=c'$, hence 
    \[0<a'-c = a' - a + a -c = a'-c'+a-c < (n+n')\cdot d.\]  
\end{proof}

\begin{lem}\label{nonlocal-fgap-lemma}
Let $\mc{M} \models \PA$ be countable and recursively saturated. Let $F$ be an idempotent sentential operator with template $\Phi(p, q)$, and assume $\Phi(p, q)$ has syntactic depth 1. Let $I \e M$ be $F$-closed and has no least $F$-gap. Suppose $S$ is a full satisfaction class, $(\mc{M}, S)$ is recursively saturated, $d > I$ and $S$ is $F$-correct on $[0, d).$ Suppose further that  $X \subseteq \Form^{\mc{M}}$ is finitely generated. Then for any formula $\tilde{\phi} \in \Form^{\mc{M}}$, there are $I < d_0 < d_1 < d$, a finitely generated $X^\prime \supseteq X$ and a full satisfaction class $S^\prime$ such that $\tilde{phi} \in X^\prime$, $(\mc{M}, S^\prime)$ is recursively saturated, $S^\prime \restriction X = S \restriction X$, $S^\prime$ is $F$-correct on $[0, d_0)$ and, for some $\theta \in X^\prime$, $F(d_1, \theta) \in X^\prime$ and  $(\mc{M}, S^\prime) \models \lnot (S^\prime(\theta, \emptyset) \equiv S^\prime(F(d_1, \theta), \emptyset))$. 
\end{lem}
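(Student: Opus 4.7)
The strategy mirrors Proposition~\ref{no-least-z-0=1} and Lemma~\ref{nonlocal-wsr-lemma}, replacing the separability hypothesis of the previous lemma by the no-least-$F$-gap hypothesis. First, using that $I$ has no least $F$-gap and $d > I$, choose $d_1 \in (I, d)$ in a distinct $F$-gap from $d$: in the accessible case, $d - n > d_1$ for all $n \in \omega$; in the additive case, $\lfloor d/n \rfloor > d_1$ for all $n \in \omega$. Applying the hypothesis to $d_1$ in place of $d$, choose $d_0 \in (I, d_1)$ similarly in a distinct $F$-gap from $d_1$. Pick an atomic $F$-irreducible sentence $\theta$ (say $0 \dot{=} 1$) that is not an $F$-root of any element of $X$ (possible since $X$ is finitely generated). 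Define $X' = \Cl(X \cup \{\tilde{\phi}, F(d_1, \theta)\})$; this is finitely generated, contains $\tilde{\phi}$, and $F(d_1, \theta) \in X' \setminus X$.

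The construction of $S'$ proceeds by an Enayat-Visser-style consistency argument analogous to Lemma~\ref{nonlocal-wsr-lemma}. Let $Th$ be the recursive theory in the language $\lpa \cup \{S, S'\}$ asserting: the elementary diagram of $\mc{M}$; that $S'$ is a full compositional satisfaction class; the preservation scheme $\{S'(\phi, \alpha) \equiv S(\phi, \alpha) : \phi \in X\}$; the $F$-correctness axiom $\forall \phi \in \Sent\, \forall c < d_0\, \forall \alpha\, (S'(F(c, \phi), \alpha) \equiv S'(\phi, \alpha))$; and the pathology axiom $\lnot(S'(F(d_1, \theta), \emptyset) \equiv S'(\theta, \emptyset))$. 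Consistency of $Th$ is verified by the finite-approximation method: given a finite fragment $T_0 \subseteq Th$, collect the finitely many relevant formulas into a set $C$ and define a candidate satisfaction class by induction along the modified subformula-and-$F$-root order on $C$, preserving $S$ on $X$, matching $\mc{M}$ on atomic formulas, and setting $S'(F(d_1, \theta), \emptyset)$ opposite to $S(\theta, \emptyset)$; truth values then propagate upward through the template $\Phi$ by compositionality.

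Since $Th$ is consistent and recursive and $(\mc{M}, S)$ is resplendent, an expansion $(\mc{M}, S, S') \models Th$ exists; chronic resplendency~\cite[Theorem~1.9.3]{ks} yields that we may take $(\mc{M}, S, S')$ to be recursively saturated. The main obstacle is verifying consistency of $Th$---in particular, that no sequence of compositional and $F$-correctness constraints forces $S'(F(d_1, \theta), \emptyset) \equiv S'(\theta, \emptyset)$. The no-least-$F$-gap hypothesis is precisely what rules this out: the $F$-gap of $d_1$ is separated from that of any $c < d_0$ by a standardly-unreachable distance (standard shifts in the accessible case, integer multiples in the additive case), so no finite chain of compositional equivalences within $X' \cap \{F(c, \theta) : c \leq d_1\}$ can propagate the truth value from below $d_0$ up to $d_1$.
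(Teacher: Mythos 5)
Your overall strategy is aligned with the paper's: choose $d_1$ and $d_0$ in successive $F$-gaps above $I$, pick a ``fresh'' $\theta$, add $F(d_1,\theta)$ to the generators, and resplend a satisfaction class that is $F$-correct on $[0,d_0)$ but wrong at $F(d_1,\theta)$. However, there are two substantial gaps.

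First, the consistency argument. The theory $Th$ you write down contains the single first-order sentence $\forall \phi\,\forall c<d_0\,\forall\alpha\,(S'(F(c,\phi),\alpha)\equiv S'(\phi,\alpha))$. A finite-approximation argument (``collect the finitely many relevant formulas into $C$ and define $S'$ by induction on $\triangleleft$'') establishes the consistency of arbitrary finite \emph{fragments} of a scheme, but it cannot by itself yield a model of this universally quantified axiom: any single construction along a finite $C$ leaves uncountably many instances of $F$-correctness unverified. The paper resolves this by building an Enayat--Visser chain $\mc{M}=\mc{M}_0\preceq\mc{M}_1\preceq\cdots$ with an $S_{n+1}$ that is $F$-correct on $[0,d_0)$ only for $\Form^{\mc{M}_n}$; the union $(\mc{N},S_N)$ then satisfies the genuine first-order axiom, and only at that point is resplendence invoked to copy $S_N$ back to $\mc{M}$. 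Your proposal collapses this two-tiered structure into a single step, and as written does not establish consistency.

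Second, the choice of $\theta$ and the additive case. The paper takes $\theta=(\neg)^{2r}\,0{=}0$ (or its negation), where $r$ bounds the complexity of formulas in $X\cup\{\tilde\phi\}$. This is not cosmetic: the construction needs $X_{d_0}\cap\Cl(F(d_1,\theta))_{d_0}=\emptyset$, which is proved via the complexity gap. With an atomic $\theta$ such as $0{=}1$, even if you stipulate that $\theta$ is not an $F$-root of any element of $X$ (which indeed rules out $F(c,\theta)\in X$ for $c>0$), you still have to verify this disjointness and, more importantly, you have not addressed the pre-processing the paper does through the $d_0$-closure $Y_{d_0}$ of Definition~\ref{def_d_closure}. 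For additive $F$, two formulas $\phi$ and $F(c,\phi)$ with $c<d_0$ can appear in a finite fragment without any connecting chain of immediate subformulas, while both possibly decompose as $F(a,\psi)$, $F(a',\psi)$ with $\psi$ the common root and $|a-a'|$ only boundedly far apart; it is precisely the pre-defined values on $Y\cup Y_{d_0}$ and the argument via Lemma~\ref{roots}~(2) and Proposition~\ref{prop_idempot_d_closure} that ensure consistent choices. Your sketch (``truth values propagate upward through the template $\Phi$ by compositionality'') does not address this coordination problem, which is the technical core of the proof in the additive case.
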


\begin{proof}

Fix $\mc{M},I, S, X, d$ and $\tilde{\phi}$ as in the assumptions. Let  $\odot$ denote $+$ if $F$ is accessible and $\cdot$ if $F$ is additive.  Let $d_1$, $d_0$ be any numbers above $I$ such that for every $n,k\in\omega$, $d_0\odot n <d_1\odot k < d$. Suppose that every formula in $X\cup\{\tilde{\phi}\}$ has complexity smaller than $r\in M$. Let $\theta := (\neg)^{2r}0=0$ if $F$ is not $q$-monotone and $\theta:=\neg (\neg)^{2r}0=0$ in the other case. We note that $\theta$ is the $F$-root of $F(d_1, \theta)$ and $\Cl(F(d_1,\theta))$ is disjoint from $X$. We put $Y:= \Cl(X\cup \{F(d_1,\theta)\})$. Observe that if  $\phi\in Y$ is an $F$-root, then either $\phi \in X$ or $\phi  =\theta$.  Hence $Y$ is closed under $F$-roots.

 We shall start our construction by extending $S$ to a $Y\cup Y_{d_0}$-satisfaction class on $\mc{M}$ which if $F$-correct on $[0,d_0)$. Proposition \ref{prop_idempot_d_closure} implies that $(Y_{d_0})_{d_0}\subseteq Y_{d_0}$. Since obviously, for any $Z, Z'$ $(Z\cup Z')_{d_0} = Z_{d_0}\cup Z'_{d_0}$, it follows that $(Y\cup Y_{d_0})_{d_0} = Y_{d_0}\cup (Y_{d_0})_{d_0}\subseteq Y\cup Y_{d_0}.$ Additionally $Y\cup Y_{d_0}$ is closed under roots and under immediate subformulae. We argue that $X_{d_0}\cap \Cl(F(d_1,\theta))_{d_0} = \emptyset.$ To this end observe that if $\psi\in \Cl(F(d_1,\theta))_{d_0}$, then either $\psi$ is in $\Cl(\theta)$, and hence the complexity of $\psi$ is greater than $2r-n$ for some standard $n$, or $\psi$ contains $\theta$ as a subformula. In both cases the complexity of $\psi$ is at least $2r-n$ for some standard $n$. Consequently, if $\psi\in \Cl(F(d_1,\theta))_{d_0}$, then $\psi$ does not belong to $X_{d_0}$, because each formula in $X_{d_0}$ is a subformula of a formula in $X$, and hence its complexity is not greater than $r$.  Moreover, if $\phi$, $F(b,\phi)$ are both in $Y\cup Y_{d_0}$ and $b<d_0$, then $\phi \in X_{d_0}\iff F(b,\phi)\in X_{d_0}$. Indeed, from right to left  this follows since $(X_{d_0})_{d_0}\subseteq X_{d_0}$. From left to right this is so, since if $F(b,\phi)\notin X_{d_0}$, then either $F(b,\phi)\in \Cl(\theta)_{d_0}$ or $F(b,\phi) = F(b',\theta)$. The first case is impossible since each formula in $\Cl(\theta)_{d_0}$ starts with a negation which does not occur in $\Phi$. In the latter case it follows that $\theta$ is a subformula of $\phi$ (because $\theta$ is $F$-irreducible) and hence $\phi\notin X_{d_0}$. 

Let us put $Y' = Y\cup Y_{d_0}$. We extend $S\restr{X}$ to a $Y'$-satisfaction class $S_0$, which is compositional and $d_0$-correct for formulae in $Y'$. For every $\phi\in Y'$ and every $\alpha$:
\begin{itemize}
    \item if $\phi \in X$, then $S_0(\phi,\alpha)$ iff $S(\phi,\alpha)$;
    \item if $\phi \in\Cl(\{\theta\})$, then ($\phi$ is a sentence and)  $S_0(\phi,\alpha)$ iff $\alpha = \emptyset$ and $\phi$ is of the form $(\neg)^{2b}0=0$.
    \item if $\phi = F(d_1,\theta)$, then ($\phi$ is a sentence and) $S_0(\phi,\alpha)$ iff $\alpha = \emptyset$ and $F$ is $q$-monotone.
    \item if $\phi$ is in the closure of $F(d_1,\theta)$, then, since $\Phi(p,q)$ has syntactic depth $1$, $\phi$ is either in $\Cl(\{\theta\})$ or $\phi = F(d_1-n,\theta)$ for some $n\in\omega$. We have already taken care of the former case. In the latter case we let the value of $\phi$ on $\alpha$ be the same as that of $F(d_1,\theta)$ on $\alpha$.
    \item otherwise $\phi =  F(a - b,\psi)$ for some $k\in\mathbb{N}$, $a\in M$, $b<k\cdot d_0$  and $\psi$, $F(a,\psi)$ such that $F(a,\psi)\in Y$, $\psi$ is an $F$-root of $F(a,\psi)$. This can happen only if $F$ is additive. Since $Y$ is closed under roots, $\psi\in Y$, hence for each $\alpha$ the value of $\psi$ on $\alpha$ has already been defined. We stipulate that the value of $F(a-b,\psi)$ on $\alpha$ is the same as that of $F(a,\psi)$ on $\alpha$. We observe that this is independent of the choice of $F(a,\psi)\in Y$: if $F(a,\psi)$ and $F(a',\psi')$ both satisfy the above conditions, then either both $F(a,\psi), F(a',\psi')$ belong to $X$ or both of them belong to $\Cl(F(d_1,\theta))$. If the former holds our claim  follows because $S$ is $F$-correct on $[0,d)$. If the latter holds, it must be the case that $\psi = \psi' = \theta$ and $|a-a'|$ is standard, so our claim follows by construction.
    \end{itemize}
We check that $S_0$ is $F$-correct on $[0,d_0)$ for sentences in $Y'$. If $F$ is accessible, this easily follows from our construction. Assume that $F$ is additive. Assume $0<b<d_0$ and fix an arbitrary $\phi$. By previous considerations either both $\phi,F(b,\phi)$ belong to $X_{d_0}$ or they both belong to $\Cl(F(d_1,\theta))_{d_0}.$ In the latter case both $\phi$ and $F(b,\phi)$ are of the form $F(d_1-b',\theta)$ for $b'<n\cdot d_0$. In particular, for an arbitrary $\alpha$, $\phi$ and $F(b,\phi)$ get the same value on $\alpha$ (by construction).

 Suppose then that $\phi,F(b, \phi)\in X_{d_0}$ and fix $a_0,a_1,b_0,b_1,n_0, n_1,\psi_0,\psi_1$ such that $\phi = F(a_0-b_0,\psi_0)$, $F(b,\phi) = F(a_1-b_1,\psi_1)$ and $F(a_i,\psi)\in X$, $b_i<n_i\cdot d_0$ and $\psi_i$ is the root of $F(a_i,\psi_i)$. 
 It follows that $\phi$ and $F(b,\phi)$ have the same root, so $\psi_0 = \psi_1$. In particular $F(b,\phi) = F(a_1-b_1,\psi_0) = F(a_0-b_0+b,\psi_0)$. Hence $a_1-b_1 = a_0-b_0+b,$ so $|a_1-a_0| = |b_1+b - b_0|\leq 3d_0<d$. In particular, since $S$ is $F$-correct on $[0,d)$, $F(a_0,\psi_0)$ and $F(a_1,\psi_0)$ are assigned by $S$ the same values on each $\alpha$.

Now we show how to find $S'$ and $X'$ as in the statement of the lemma. We let $X' = X \cup \Cl(\{\tilde{\phi},\theta, F(d_1,\theta)\})$. For $S'$, by an easy resplendency argument, it is enough to build an extension $\mc{N}\succeq \mc{M}$ and a satisfaction class $S_N$ such that
\begin{enumerate}
    \item $S_N$ is an $\mathcal{N}$ satisfaction class which is $F$-correct satisfaction class on $[0,d_0)$.
    \item $S_N$ makes $F(d_1,\theta)\equiv \theta$ false.
    \item $S_N$ agrees with $S$ on $X$.
\end{enumerate}
We observe that since $X$ is finitely generated, then condition $3$ is expressible in the language of arithmetic augmented with $S_N$ and $S$. In the construction we shall heavily rely on the extension of $S$ to $Y'$ given by $S_0$. We build $\mc{N}$ and $S_N$ in stages following the idea of \cite{enayat-visser}. Let $\mc{M}_0 = \mc{M}$, and we construct a chain of pairs $(\mc{M}_0, S_0), (\mc{M}_1, S_1), \ldots$ which satisfy the following conditions
\begin{itemize}
    \item for each $n$, $\mc{M}_{n}\preceq \mc{M}_{n+1}$.
    \item for each $n$, $S_{n+1}$ is a $\Form^{\mc{M}_n}$-satisfaction class.
    \item $S_1$ agrees with $S_0$ on $Y^\prime$ and for each $n>1$, $S_{n+1}$ agrees with $S_n$ on $\Form^{\mc{M}_n}$.
    \item for each $n$, $S_{n+1}$ is $F$-correct on $[0,d_0)$ with respect to formulae from $\Form^{\mc{M}_n}$.
\end{itemize}

We show how to construct $\mc{M}_1, S_1$ and the construction of $\mc{M}_{n+1}, S_{n+1}$ given $\mc{M}_n, S_n$ for $n\geq 1$ is fully analogous. We consider the theory given as the union of the following sets of sentences:
\begin{itemize}
    \item[A] $\textnormal{ElDiag}(\mc{M}_0)$;
    \item[B] $\{S(\phi,\alpha) : \phi \in Y', (\phi, \alpha) \in S_0\}$
    \item[C] $\{ \textnormal{Comp}(\phi,\psi,\theta) : \phi\in \Form^{\mc{M}_0}\}.$
    \item[D] $\{\forall \alpha S(F(a,\phi)\equiv \phi, \alpha) : a<d_0, \phi\in\Form^{\mc{M}_0}\}.$
\end{itemize}

Fix a finite portion $T_0$ of this theory and let $E$ be the set of those formulae which occur in one of the axioms in $T_0$. 

Let us observe that the relation $\phi\sqsubset \psi := \mc{M}_0\models "\phi\textnormal{ is a subformula of } \psi$" is well-founded on $E$, since $E$ is finite. By this we mean that $\phi \sqsubset \psi$ if $\mc{M}_0$ sees that $\phi$ is a subformula (not necessarily direct) of $\psi$. We define $S\subseteq M_0^2$ by induction on the ranking function rk($\cdot$) given by $\sqsubset$. For an arbitrary $\psi$ of rank $0$ we put
\begin{itemize}
    \item if $\psi$ is standard, then we know what to do.
    \item if $\psi\in Y'$, then $(\psi,\alpha) \in S$ iff $(\psi,\alpha)\in S_0$
    \item if $\psi\notin Y'$, then for no $\alpha$, $(\psi,\alpha) \in S$.  
\end{itemize}
If $\phi$ has positive rank, then
\begin{itemize}
    \item if all immediate subformulae are in $E$, then the immediate subformulae of $\phi$ have lower ranks, so we know what to do.
    \item if the above does not hold and $\phi = F(a,\psi)$ for some $\psi\in E$ and $0<a<d_0$, then $\psi$ has lower rank, so for an arbitrary $\alpha$ we put $(\phi,\alpha) \in S$ iff $(\psi,\alpha) \in S$. 
     \item if $\psi\in Y'$, then $(\psi,\alpha) \in S$ iff $(\psi,\alpha) \in S_0$.
    \item otherwise, for every $\alpha, (\phi,\alpha) \notin S.$
    
\end{itemize}

We check that with so defined $S$, $(\mc{M},S)\models T_0.$ That the compositional clauses hold is clear from the construction. We check that $S$ is $F$-correct on $[0,d_0)$ for sentences in $E$. By induction on $n$ we prove that for all $\phi, F(a,\phi)\in E$, $\textnormal{rk}(\phi)+\textnormal{rk}(F(a,\phi)) = n$, $a < d_0$, then for every $\alpha$, $S(\phi,\alpha)\iff S(F(a,\phi),\alpha).$ Since $\textnormal{rk}(\phi)+\textnormal{rk}(F(a,\phi)) = 0$ only if $a=0$, the base step is trivial. Assume $\rk(\phi)+ \rk(F(a,\phi))$ is positive. Then certainly $\rk(F(a,\phi))$ is positive. If all immediate subformulae of $F(a,\phi)$ belong to $E$, then at least one of them is of the form $F(a-1,\phi)$ and the thesis follows by inductive hypothesis and idempotency of $\Phi$, since $F(a-1,\phi)$ has lower rank than  $F(a,\phi)$. Otherwise, for some $\psi\in E$ and $b<d_0$ such that $F(a,\phi) = F(b,\psi)$ and we decided that for every $\alpha$, the values of $F(b,\psi)$ and $\psi$ are the same. By Lemma \ref{roots}, for some $b'$, either $\phi = F(b',\psi)$ or $\psi = F(b',\phi)$. Hence the thesis follows by the inductive assumption.


Now we argue for the preservation axioms. By induction on the rank of $\phi$ we prove that if $\phi\in Y'$, then for every $\alpha$, $S(\phi,\alpha)$ iff $S_0(\phi,\alpha).$ This is immediate for formulae of rank $0$. In the induction step we use the definition of $S$ and the closure properties of $Y'$.

For the step induction step we first consider extend $S_n\upharpoonright_{M_n}$ to the set $\textrm{Form}^{\mc{M}}\cup (\textrm{Form}^{\mc{M}})_{d_0}\subseteq \textrm{Form}^{\mc{M}_{n+1}}.$ Then we argue as in the first step.
\end{proof}

\begin{thm}\label{nonlocal}Let $\mc{M} \models \PA$ be countable and recursively saturated and $I \e M$. Let $F$ be an idempotent sentential operator with template $\Phi(p, q)$, and assume $\Phi(p, q)$ has syntactic depth 1. Suppose $I$ is $F$-closed. Then if $I$ is separable or has no least $F$-gap above it, there is $S$ such that $(\mc{M}, S) \models \csm$ and $$I = \{ x : \forall y \leq x \forall \phi \in \Sent^{\mc{M}} (T(\phi) \equiv T(F(y, \phi))) \}.$$ \end{thm}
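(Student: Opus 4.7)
The plan is to prove both cases by a back-and-forth chain construction: build a sequence $F_0 \subseteq F_1 \subseteq \ldots$ of finitely generated subsets of $\Form^{\mc{M}}$ with $\bigcup_i F_i = \Form^{\mc{M}}$, and full satisfaction classes $S_0, S_1, \ldots$ with $(\mc{M}, S_i)$ recursively saturated and $S_{i+1} \restriction F_i = S_i \restriction F_i$, so that in the end $S = \bigcup_i (S_i \restriction F_i)$ is a full satisfaction class on $\mc{M}$ that inherits the relevant correctness/triviality properties from the approximations. An externally fixed enumeration of $\Form^{\mc{M}}$ in order type $\omega$ ensures that $\bigcup_i F_i = \Form^{\mc{M}}$.

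For the separable case, I would iterate Lemma \ref{nonlocal-wsr-lemma}, ensuring at each stage that $S_i$ is $F$-correct on $I$ and $F$-trivial above $I$ for sentences in $F_i$. Let $I_* = \{x : \forall y \leq x \forall \phi \in \Sent^{\mc{M}}\, T(\phi) \equiv T(F(y,\phi))\}$ in the resulting $(\mc{M}, S)$. If $x \in I$ then, since $I$ is a cut, every $y \leq x$ lies in $I$, and $F$-correctness gives $T(\phi) \equiv T(F(y,\phi))$, so $x \in I_*$. Conversely, if $x > I$, pick $y$ with $I < y \leq x$; by $F$-triviality, $T(F(y,\phi))$ is fixed across all $\phi$ (always true if $\Phi(p,q)$ is $q$-monotone, always false otherwise), so choosing $\phi$ with the opposite truth value (e.g., $0=1$ versus $0=0$) yields $T(\phi) \not\equiv T(F(y,\phi))$, and $x \notin I_*$. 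Hence $I = I_*$.

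For the no-least-$F$-gap case, I would fix a sequence $d^{(0)} > d^{(1)} > \ldots$ of elements above $I$ with $\inf_i d^{(i)} = I$, possible by the hypothesis on $I$. At step $i$, I invoke Lemma \ref{nonlocal-fgap-lemma} with $d = d^{(i)}$, the current $X = F_i$ and $S = S_i$, and $\tilde\phi$ the next formula in the enumeration, obtaining $I < d_0^{(i)} < d_1^{(i)} < d^{(i)}$, a finitely generated $F_{i+1} \supseteq F_i$ containing $\tilde\phi$, and $S_{i+1}$ that is $F$-correct on $[0, d_0^{(i)})$ and witnesses failure $T(\theta_i) \not\equiv T(F(d_1^{(i)}, \theta_i))$ for some $\theta_i, F(d_1^{(i)}, \theta_i) \in F_{i+1}$. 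I would choose $d^{(i+1)} \leq d_0^{(i)}$ and arrange that both $d_0^{(i)}$ and $d_1^{(i)}$ are cofinal from above with $I$ (automatic since $d_0^{(i)} > I$ and we pick $d^{(i+1)}$ squeezing down). Then $S$ is $F$-correct on $\bigcap_i [0, d_0^{(i)}) = I$ so $I \subseteq I_*$; and for every $x > I$ there is some $d_1^{(i)} \leq x$, whose witness shows $x \notin I_*$.

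The main obstacle is the bootstrap: Lemma \ref{nonlocal-fgap-lemma} requires the incoming $S$ to already be $F$-correct on $[0, d^{(0)})$ for some $d^{(0)} > I$, so the starting pair $(F_0, S_0)$ cannot be taken to be an arbitrary full satisfaction class. I would resolve this with a direct Enayat-Visser compactness argument, analogous to the proof of Lemma \ref{nonlocal-fgap-lemma} with $X = \emptyset$: the theory asserting $\textnormal{ElDiag}(\mc{M})$, full compositionality of $S$, and the single axiom $\forall y < d^{(0)}\,\forall \phi \in \Sent\,(S(\phi,\emptyset) \equiv S(F(y,\phi),\emptyset))$ is shown consistent by the same rank-induction construction of a partial satisfaction class used in the lemma, and then resplendency produces $S_0$ on $\mc{M}$ itself with $(\mc{M}, S_0)$ recursively saturated. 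Once this initial step is in hand, the rest of the inductive construction proceeds as described.
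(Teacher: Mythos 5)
Your proof follows essentially the same chain construction as the paper: iterate Lemmas \ref{nonlocal-wsr-lemma} and \ref{nonlocal-fgap-lemma} to build finitely generated $F_i$ and full satisfaction classes $S_i$, then verify that the limit $S=\bigcup_i (S_i\restriction F_i)$ has exactly $I$ as its $F$-correct cut. Your explicit treatment of the base case for the no-least-$F$-gap branch---constructing an initial $S_0$ that is $F$-correct on $[0,d^{(0)})$ by a separate Enayat--Visser/resplendency argument---fills a point the paper's induction leaves implicit, since Lemma \ref{nonlocal-fgap-lemma} presupposes its incoming satisfaction class is already globally $F$-correct on some $[0,d)$ with $d>I$.
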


\begin{proof}
We construct sequences $F_0 \subseteq F_1 \subseteq \ldots$ and $S_0, S_1, \ldots$ of sets such that:
\begin{enumerate}
	\item $F_i$ is finitely generated and $\cup F_i = \Form^{\mc{M}}$,
	\item $S_i$ is a full satisfaction class and $(\mc{M}, S_i)$ is recursively saturated
	\item $S_{i+1} \restriction F_i = S_i \restriction F_i$,
	\item $S_i$ is $F$-correct on $I$ for sentences from $F_i$, and
	\item for each $x > I$, there is $I < y < x$, $i \in \omega$ and $\phi \in F_i$ such that $F(y, \phi) \in F_i$ and \allowbreak $\lnot (S_i(F(y, \phi), \alpha) \equiv S_i(\phi, \alpha))$ for all assignments $\alpha$.
\end{enumerate}

If $I$ is separable, we also ensure that $S_i$ is $F$-trivial above $I$ for sentences in $F_i$.

Prior to starting the construction, if $I$ has no least $F$-gap above it, we externally fix a sequence $d_0 > d_1 > \ldots$ such that $\inf\{ d_i : i \in \omega \} = I$ and for each $i$, $d_i$ and $d_{i+1}$ are in different $F$-gaps. Additionally, we externally fix an enumeration of $\Form^{\mc{M}}$ (in order type $\omega$).

Suppose we have constructed $F_i$ and $S_i$. Let $\phi$ be the least formula in our enumeration that is not in $F_i$. If $I$ is separable, let $F_{i+1}$ be generated by $F_i$ and $\phi$, and apply Lemma \ref{nonlocal-wsr-lemma} to obtain $S_{i+1}$. Otherwise, we suppose $S_i$ is $F$-correct on $[0, d_i)$ and apply Lemma \ref{nonlocal-fgap-lemma} to obtain $F_{i+1}$, $S_{i+1}$, and $I < c_0 < c_1 < d_i$ such that $S_{i+1}$ is $F$-correct on $[0, c_0)$ but not on $[0, c_1)$. (In fact, there is $\theta \in F_{i+1}$ that witnesses the failure of $F$-correctness on $[0, c_1)$.) Without loss of generality, we can replace $d_{i+1}$ with the minimum of $\{ c_0, d_{i+1} \}$, so that we can assume $S_{i+1}$ is $F$-correct on $[0, d_{i+1})$ and continue.

Having constructed these sequences, let $S = \cup S_i \restriction F_i$. Then it follows that $S$ is $F$-correct on $I$ and for each $x > I$, there is $\phi$ such that $\lnot (T(\phi) \equiv T(F(x, \phi)))$.
\end{proof}

\begin{rem}
    It is easy to see that in fact a tiny modification of our proof of Theorem \ref{nonlocal} shows something more: we can perform our construction in such a way that $S$ is $F$ correct on $I$ not only on all sentences but on \emph{all formulae}. Hence, given $\mc{M}, I$ and $F$ as in the assumptions of Theorem \ref{nonlocal} we can find a satisfaction class $S$ such that 
    \begin{align*}
        I &= \{ x : \forall y \leq x \forall \phi \in \Sent^{\mc{M}} (T(\phi) \equiv T(F(y, \phi))) \}\\
        &= \{ x : \forall y \leq x \forall \phi \in \Form^{\mc{M}} \forall \alpha \bigl(S(\phi,\alpha) \equiv S(F(y, \phi),\alpha)\bigr) \}.
    \end{align*}
\end{rem}
We assume that $\Phi$ has depth 1 in the previous results because the more general case is quite complicated. In particular, if $\Phi$ has depth at least 2, then it might not be possible to ensure that $S$ is $F$-trivial above $I$ as we do in Lemma \ref{nonlocal-wsr-lemma}. For example, suppose $\Phi(p, q) = (\lnot \lnot) q$, $\phi = (0 = 0)$ and $\psi = \lnot (0 = 0)$. Then, for any $x$ and any satisfaction class $S$, $T((\lnot \lnot)^x \phi) \equiv \lnot T( (\lnot \lnot)^x \psi)$. However, we show in our next result that we can still ensure that, if $I$ is separable and closed under addition, there is $S$ such that $I$ is the $(\lnot\lnot)$-correct cut.

\begin{prop}\label{double-neg-nonlocal}
Let $\mc{M} \models \PA$ be countable and recursively saturated and $I \e M$ separable and closed under addition. Then there is $S$ such that $(\mc{M}, S) \models \csm$ and $$I = \{ x : \forall y \leq x \forall \phi \in \Sent^{\mc{M}} (T(\phi) \equiv T((\lnot\lnot)^y (\phi))) \}.$$
\end{prop}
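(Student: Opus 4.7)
The proof adapts the strategy of Theorem \ref{nonlocal} to the operation $\Phi(p,q) = \lnot\lnot q$. The difficulty is that this $\Phi$ has syntactic depth $2$ and is neither $q$-monotone nor its negation, so the uniform ``$F$-trivial above $I$'' device of Lemma \ref{nonlocal-wsr-lemma} is unavailable: compositionality forces $S((\lnot\lnot)^y \phi, \emptyset)$ and $S((\lnot\lnot)^y \lnot\phi, \emptyset)$ to be opposite, which precludes setting every $(\lnot\lnot)^y \phi$ with $y > I$ to a single truth value. Instead I will introduce non-correctness selectively at specifically chosen witnesses.

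The construction builds, in $\omega$ stages, finitely generated $F_0 \subseteq F_1 \subseteq \cdots$ with $\bigcup_i F_i = \Form^{\mc{M}}$, and full satisfaction classes $S_0, S_1, \ldots$ with $(\mc{M}, S_i)$ recursively saturated and $S_{i+1}\restriction F_i = S_i \restriction F_i$, maintaining two invariants: (i) $S_i$ is $\lnot\lnot$-correct on $I$ for sentences in $F_i$, i.e.\ $S_i((\lnot\lnot)^y\phi, \alpha) \equiv S_i(\phi, \alpha)$ whenever $y \in I$ and $(\lnot\lnot)^y\phi, \phi \in F_i$; and (ii) the accumulated non-correctness witnesses have $y$-coordinates coinitial with $I$ from above. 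Setting $S = \bigcup_i S_i \restriction F_i$, the $\lnot\lnot$-correctness set $D = \{y : \forall \phi\, T(\phi) \equiv T((\lnot\lnot)^y \phi)\}$ is closed under successor and predecessor (via double application of CS3), so invariant (ii) forces $I(D) = I$.

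Each inductive step modifies the Enayat--Visser compactness argument of Lemma \ref{nonlocal-wsr-lemma}: drop the $F$-triviality axioms, retaining only compositionality on the new finitely generated $F'$, preservation on $F_i$, and $\lnot\lnot$-correctness on $I$ for formulas in $F'$. When introducing a witness, also assert $\lnot[S_1((\lnot\lnot)^{y_k} \phi_k, \emptyset) \equiv S_1(\phi_k, \emptyset)]$ for some $y_k$ chosen in a $\mathbb{Z}$-gap strictly above the separability threshold $d$ obtained by applying separability of $I$ to the finite generator code of $F'$. To verify consistency of a finite subtheory $T_0$, assign values to the finite set $E$ of formulas in $T_0$'s axioms by induction on the subformula relation as in Lemma \ref{nonlocal-wsr-lemma}. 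The key point is that $T_0$ contains standardly many compositional axioms, so the transitive closure of the Comp-forced equalities has standardly-bounded chain lengths; hence the equivalence class of $(\lnot\lnot)^{y_k}\phi_k$ remains within a standard-neighborhood of $y_k$. Because $y_k$ lies in a $\mathbb{Z}$-gap strictly above $d$, this class does not descend to $\phi_k$ and does not meet $I$, so the conditions $v((\lnot\lnot)^{y_k}\phi_k) = \lnot S_i(\phi_k)$ and $v((\lnot\lnot)^y \phi_k) = S_i(\phi_k)$ for $y \in I$ are jointly consistent. Chronic resplendence then yields $S_{i+1}$.

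The main obstacle is arranging the witnesses so that their $y$-coordinates are coinitial with $I$ from above. If $I$ has no least $\mathbb{Z}$-gap above it, one enumerates the countably many $\mathbb{Z}$-gaps above $I$ and, via recursive saturation, places a witness $y_k$ in the $k$-th such gap (compatibly with the separability thresholds at each stage). If $I$ does have a least $\mathbb{Z}$-gap $G$ above it, a single witness $y_0 \in G$ suffices: closure of $D$ under successor and predecessor forces $G \cap D = \emptyset$, and for any $x > I$ either $x \in G$ (and some $y \leq x$ in $G$ lies outside $D$) or $x > G$ (and any $y \in G$ works). In either case the resulting $S$ satisfies $I = \{x : \forall y \leq x\, \forall \phi \in \Sent^{\mc{M}}\, T(\phi) \equiv T((\lnot\lnot)^y \phi)\}$.
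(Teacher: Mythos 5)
You correctly diagnose why the $F$-triviality device of Lemma \ref{nonlocal-wsr-lemma} is unavailable for $\Phi(p,q) = \lnot\lnot q$ (compositionality forces $(\lnot\lnot)^y \phi$ and $(\lnot\lnot)^y \lnot\phi$ to take opposite values), but you then abandon the idea of a uniform rule above the threshold $d$, and this leaves a genuine gap. The natural replacement, which is exactly what the paper uses, is $F$-\emph{incorrectness}: for $(a)_n > d$ one asserts $S'((\lnot\lnot)^{(a)_n}(b)_n, \alpha) \equiv S'(\lnot (b)_n, \alpha)$, i.e.\ iterates above the threshold get the \emph{negated} value of their root. This is a uniform, recursively describable constraint just like $F$-triviality, and it yields the stronger conclusion that the correctness \emph{set} equals $I$, not merely that $I$ is its longest initial segment.

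Your version drops all constraints above $d$ except a single witness, and this breaks the verification of the invariant you need, namely $\lnot\lnot$-correctness on $I$ for \emph{all} pairs in $F'$, not just root pairs. Concretely, if $\psi$ is a root, $z, z+x > I$ with $x \in I$, and $(\lnot\lnot)^z \psi$, $(\lnot\lnot)^{z+x}\psi$ both lie in $X'$, then both have $F$-length $\geq d$ and so neither is covered by a correctness axiom. Nothing in your theory forces them to agree, and the inductive assignment of the Enayat--Visser step will in general set them independently; if one of them happens to be your flipped witness $(\lnot\lnot)^{y_k}\phi_k$ and $v(\phi_k)$ is false, you get an outright conflict. (In the paper's argument this case is dispatched because $I$ being closed under addition plus separability gives $(a)_n < d \iff (a)_m < d$ for the two lengths involved, and then \emph{both} fall under $F$-correctness or \emph{both} fall under $F$-incorrectness, hence agree.) A second, lesser problem is your coinitiality scheme: the witness $y_k$ must be in a fresh $\mathbb{Z}$-gap disjoint from all iteration lengths already committed in $F_k$, and simultaneously above the stage's separability threshold $d$ and also in the externally designated $k$-th gap above $I$; you gesture at ``compatibility'' but do not argue that all three constraints can be met. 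The paper's approach sidesteps this entirely, since uniform incorrectness above $d$ makes \emph{every} $x > I$ a failure point once $(\lnot\lnot)^x(0{=}0)$ is enumerated, so no coordination of witnesses with gaps is needed.
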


\begin{proof}
Modify the definition of $\triangleleft$ so that $\phi \triangleleft \psi$ if either $\phi$ is an immediate subformula of $\psi$ or $\phi$ does not start with a double negation and there is $x$ such that $(\lnot \lnot)^x \phi = \psi$. (That is, $\phi$ is the $F$ root of $\psi$ where $F(x, \theta) = (\lnot \lnot)^x \theta$.) By similar techinques to the proof of Theorem \ref{nonlocal}, it suffices to show the following: given any finitely generated $X$ and full satisfaction class $S$ such that
\begin{itemize}
\item $(\mc{M}, S)$ is recursively saturated,
\item if $x \in I$, $\phi \in X$, and $(\lnot \lnot)^x \phi \in X$, then for each assignment $\alpha$ of $\phi$, $(\phi, \alpha) \in S$ iff $((\lnot \lnot)^x \phi, \alpha) \in S$, and,
\item if $x > I$, $(\lnot\lnot)^x \phi \in X$, and $\phi \in X$, then for each assignment $\alpha$ of $\phi$, $(\lnot \phi, \alpha) \in S$ iff  $((\lnot \lnot)^x \phi, \alpha) \in S$, 
\end{itemize} then for any finitely generated $X^\prime \supseteq X$, there is a full satisfaction class $S^\prime$ such that 
\begin{itemize}
\item $(\mc{M}, S^\prime)$ is recursively saturated,
\item $S^\prime \restriction X = S \restriction X$,
\item if $x \in I$, $\phi \in X^\prime$, and $(\lnot \lnot)^x \phi \in X^\prime$, then for each assignment $\alpha$ of $\phi$, $(\phi, \alpha) \in S^\prime$ iff $((\lnot \lnot)^x \phi, \alpha) \in S^\prime$, and,
\item if $x > I$, $(\lnot\lnot)^x \phi \in X^\prime$, and $\phi \in X^\prime$, then for each assignment $\alpha$ of $\phi$, $(\lnot \phi, \alpha) \in S^\prime$ iff $((\lnot \lnot)^x \phi, \alpha) \in S^\prime$.
\end{itemize} Moreover, rather than find a full satisfaction class satisfying the above, we simply need to find an $X^\prime$-satisfaction class $S^\prime$ satisfying the above. To do so, let $a, b, $ and $c$ code enumerations such that $\{ (c)_n : n \in \omega \} = X^\prime \cap \Sent^{\mc{M}}$, $(b)_n$ is the root of $(c)_n$, and $(c)_n = (\lnot \lnot)^{(a)_n} ((b)_n)$. By separability, there is $d$ such that for each $n \in \omega$, $(a)_n \in I$ if and only if $(a)_n < d$. We show that the theory $Th$ consisting of the following is consistent:
\begin{itemize}
\item $\textnormal{ElDiag}(\mc{M})$,
\item $\{ \Comp(\phi, \psi, \theta) : \phi, \psi, \theta \in X^\prime \}$,
\item $\{ S^\prime(\phi, \alpha) \equiv S(\phi, \alpha) : \phi \in X \}$ (preservation),
\item $\{ S^\prime( (\lnot \lnot)^{(a)_n} ((b)_n), \alpha) \equiv S^\prime((b)_n, \alpha) : n \in \omega, (a)_n < d \}$ ($F$-correctness), and
\item $\{ S^\prime( (\lnot \lnot)^{(a)_n} ((b)_n), \alpha) \equiv S^\prime(\lnot (b)_n, \alpha) : n \in \omega, (a)_n > d \}$ ($F$-incorrectness).
\end{itemize} Again, one can show that if $(\mc{M}, S, S^\prime) \models Th$, then $S^\prime$ is an $X^\prime$-satisfaction class satisfying the required properties.

To show that $Th$ is consistent, let $T_0 \subseteq Th$ be finite, and let $C$ be the set of formulas whose instances of compositionality, preservation, double negation correctness and double negation incorrectness are in $T_0$. Since $C$ is finite, then the modified subformula relation $\triangleleft$ is well-founded on $C$, and we define $S^\prime$ inductively on this relation.

Suppose $\phi$ is minimal in $C$. If $\alpha$ is an assignment for $\phi$, we put $(\phi, \alpha) \in S^\prime$ if either $\phi$ is atomic and $\mc{M} \models \phi[\alpha]$, or $\phi \in X$ and $(\phi, \alpha) \in S$. We define $\phi$ of higher rank using compositionality if possible. If this is not possible, then it must be the case that there is $n \in \omega$ such that $\phi = (\lnot \lnot)^{(a)_n} ((b)_n)$ and $(b)_n \in C$ has lower rank than $\phi$. We put $(\phi, \alpha) \in S^\prime$ if either $(a)_n < d$ and at an earlier stage we decided $((b)_n, \alpha) \in S^\prime$, or if $(a)_n > d$ and, at an earlier stage we decided $((b)_n, \alpha) \not \in S^\prime$.

We verify that $(\mc{M}, S, S^\prime) \models T_0$. Clearly it satisfies the diagram and compositionality axioms by construction. Suppose $\phi \in X$ is such that $\forall \alpha (S^\prime(\phi, \alpha) \equiv S(\phi, \alpha)) \in T_0$. If $\phi$ is of minimal rank, then this is true by construction. If not, we can assume, by induction, that whenever $\psi \triangleleft \phi$ is such that $\psi \in C$, then $\forall \alpha (S^\prime(\psi, \alpha) \equiv S(\psi, \alpha))$. If $\phi$ is determined via compositionality, then the result for $\phi$ follows from the fact that both $S$ and $S^\prime$ are compositional for formulas in $X$. Otherwise, the result for $\phi$ follows from either double negation correctness up to $I$, or double negation incorrectness above $I$.

Now let $\theta = (\lnot \lnot)^{(a)_n}((b)_n)$, and suppose $\forall \alpha S^\prime(\theta, \alpha) \equiv S^\prime((b)_n, \alpha) \in T_0$, where $(a)_n < d$. By construction, $\theta$  is not minimal in $C$. The immediate subformula of $\theta$ is $\psi = \lnot (\lnot \lnot)^{(a)_n - 1} ((b)_n)$. If $\psi \in C$, then by construction we have that $S^\prime(\theta, \alpha) \equiv \lnot S^\prime(\psi, \alpha)$. By induction, we can assume we have $S^\prime(\psi, \alpha) \equiv \lnot S^\prime((b)_n, \alpha)$. If $\psi \not \in C$, then by construction we put $S^\prime(\theta, \alpha) \equiv S^\prime((b)_n, \alpha)$.

A similar argument shows double negation incorrectness in the case that $(a)_n > d$.
\end{proof}


By Theorem \ref{nonlocal}, if $I$ is either separable or has no least $\mathbb{Z}$-gap above it, there is $T$ such that $(\mc{M}, S) \models \csm$ and $I(\IDC_S) = I$. In fact, if $\omega$ is a strong cut, then by Proposition \ref{wsr-vs-least-z} every cut $I$ is either separable or has no least $\mathbb{Z}$-gap, and therefore every cut $I$ can be $I(\IDC_S)$ for some satisfaction class $S$.  Similarly, if $\omega$ is strong, then every additively closed cut $I$ is either separable or has no least additive gap above it, and therefore each additively closed cut can be $I(\IDC^{\text{bin}}_S)$.

To complete the picture, we can show that if $F$ is an idempotent sentential operator and $I$ is the $F$-correct cut, then either $I$ has no least $F$-gap above it or is separable. Therefore, if $\mc{M}$ is not arithmetically saturated, then there are cuts $I$ which cannot be realized as $I(\IDC_S)$ for any $T$.

\begin{prop}\label{wsr-or-no-least-z}
Let $F$ be an accessible idempotent sentential operator. Suppose $(\mc{M}, S)\models\csm$ and $I\subseteq_{end} \mc{M}$ is such that
\[I = \{x : \forall y\leq x \forall \phi \bigl(T(F(y,\phi))\equiv T(\phi)\bigr)\}.\]
Then either there is no least $\mathbb{Z}$-gap above $I$ or $I$ is separable.
\end{prop}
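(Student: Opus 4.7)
My plan is to establish the proposition by showing that if $I$ has a least $\mathbb{Z}$-gap $G_0$ above it, then $I$ is separable (so the disjunction in the conclusion holds). Fix $x_0 \in G_0$ with $\inf\{x_0 - k : k \in \omega\} = I$; then $I$ is $\omega$-coded from above by the sequence $\langle x_0 - k\rangle_{k \in \omega}$. By Proposition \ref{wsr-vs-least-z}\eqref{strength-least-z}, if $\omega$ is strong in $\mc{M}$ then $I$ is automatically separable, so the substantive case is when $\omega$ is not strong. I assume toward contradiction that $I$ is not separable and fix $a \in M$ witnessing the non-strength of $\omega$, so that $\sup\{(a)_i : i \in \omega, (a)_i \in \omega\} = \omega = \inf\{(a)_i : i \in \omega, (a)_i \notin \omega\}$; by replacing $(a)_i$ with $\min((a)_i, x_0 - 1)$ I may further assume $(a)_i < x_0$ for every $i \in \omega$ without destroying the witnessing property.

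The crucial structural input comes from accessibility of $F$: the template $\Phi(p,q)$ is then logically equivalent to either $p \vee q$ or $p \wedge q$. In the $\vee$ case, compositionality gives $T(F(y+1,\phi)) = T(\phi) \vee T(F(y,\phi))$, and a case analysis on $T(\phi)$ shows that for every $\phi \in \Sent^{\mc{M}}$ the set $J_\phi := \{y \in M : T(F(y,\phi)) \equiv T(\phi)\}$ is closed under $\pm 1$, hence a union of $\mathbb{Z}$-gaps; the $\wedge$ case is symmetric. Consequently $I_0 := \bigcap_\phi J_\phi$ is a union of gaps, and so is $I = I(I_0)$. If $G_0 \cap I_0$ were nonempty, closure under $\pm 1$ would force $G_0 \subseteq I_0$, which together with $I \subseteq I_0$ would put $G_0$ inside the initial segment $I(I_0) = I$, contradicting $G_0 \cap I = \emptyset$. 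So $G_0 \cap I_0 = \emptyset$; in particular $x_0 \notin I_0$, and I fix $\phi^* \in \Sent^{\mc{M}}$ with $x_0 \notin J_{\phi^*}$. Closure of $J_{\phi^*}$ under $\pm 1$ upgrades this to $G_0 \cap J_{\phi^*} = \emptyset$, while $I \subseteq I_0 \subseteq J_{\phi^*}$.

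Now I pass to the derived sequence $b \in M$ defined by $(b)_i = x_0 - (a)_i$, and I locate each $(b)_i$ in the gap structure for $i \in \omega$. If $(a)_i \in \omega$, then $x_0 - (b)_i = (a)_i$ is standard, so $(b)_i \in G_0$ and hence $(b)_i \notin J_{\phi^*}$. If $(a)_i$ is nonstandard, then $(b)_i < x_0$ and $x_0 - (b)_i$ is nonstandard, so $(b)_i$ is not in $G_0$; but because $G_0$ is the \emph{least} gap above $I$, the gap of $(b)_i$ is forced to lie inside $I$, giving $(b)_i \in I \subseteq J_{\phi^*}$. Thus for every $i \in \omega$ I obtain the equivalence $(a)_i \in \omega \iff (b)_i \notin J_{\phi^*}$.

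To finish, I apply Theorem \ref{separability-theorem} to $D = \{F(y, \phi^*) : y \in M\}$; combined with Proposition \ref{prop_closure_sep_preim} and the fact that $y \mapsto F(y, \phi^*)$ is an $\mc{M}$-definable surjection onto $D$, this yields that $J_{\phi^*}$ is separable from $M$. Applied to $b$, separability gives $c \in M$ with $(b)_i \in J_{\phi^*} \iff i \in c$ for all $i \in \omega$. Combined with the equivalence from the previous paragraph, the complement of $c$ computed inside any $\mc{M}$-bounded interval above $\omega$ codes $\{i \in \omega : (a)_i \in \omega\}$, contradicting the choice of $a$ as a witness to the non-strength of $\omega$. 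The main obstacle I expect in executing this plan is the gap analysis of $b$: the argument hinges on the delicate interaction between two facts, namely that $J_{\phi^*}$ misses $G_0$ entirely (which uses accessibility of $F$ via the $\pm 1$-closure of $J_{\phi^*}$) and that $G_0$ is the \emph{least} gap above $I$ (which forces $(b)_i$ into $I$ for nonstandard $(a)_i$ instead of allowing it to escape to some intermediate gap).
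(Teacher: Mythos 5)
Your proof is correct, but it takes a genuinely different route from the paper's. The paper's argument is a direct one-pass application of Theorem \ref{separability-theorem}: it fixes $\phi$ with $\lnot T(F((a)_0,\phi)\equiv\phi)$, restricts attention to the bounded family $D = \{F(a,\phi)\equiv\phi : a < (a)_0\}$, observes that $\{c < (a)_0 : T(F(c,\phi)\equiv\phi)\} = I$ (using $\pm 1$-closure of $J_\phi$ inside $G_0$ and the definition of $I$ below $G_0$), and then reads off separability of $I$ from separability of $A$ within $D$ via the bijection $a\mapsto F(a,\phi)\equiv\phi$. You instead apply Theorem \ref{separability-theorem} to the unbounded family $\{F(y,\phi^*) : y\in M\}$ to get separability of $J_{\phi^*}$, push that through the derived sequence $(b)_i = x_0 - (a)_i$, and conclude \emph{strength of $\omega$}, whence separability of $I$ follows from Proposition \ref{wsr-vs-least-z}. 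Both arguments hinge on the same two ingredients (the $\pm 1$-closure of $J_\phi$ and Theorem \ref{separability-theorem}), but yours makes the equivalence ``$I$ separable $\iff \omega$ strong'' (for $\omega$-coded $I$) explicit rather than implicit, at the cost of being substantially longer; the paper's version buys brevity by choosing $D$ bounded so that $I$ is literally a preimage. Two small notes. First, your proof-by-contradiction framing around ``$I$ is not separable'' is vestigial: you never actually invoke that hypothesis, since the contradiction you derive is purely with ``$\omega$ is not strong.'' What you really prove is that a least $\mathbb{Z}$-gap above $I$ forces $\omega$ to be strong, after which Proposition \ref{wsr-vs-least-z}(\ref{strength-least-z}) finishes. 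Second, the assertion that accessibility forces $\Phi(p,q)$ to be equivalent to $p\vee q$ or $p\wedge q$ is not right: accessibility is the syntactic condition that $p$ appears in $\Phi$, which does not preclude $\Phi$ being logically equivalent to $q$ (e.g. $\Phi(p,q) = (p\wedge q)\vee(\lnot p\wedge q)$). This is harmless for your argument because the recursion $T(F(y+1,\phi)) = \Phi^*(T(\phi), T(F(y,\phi)))$ still makes $J_\phi$ closed under $\pm 1$ in the $q$-case (it is just $u_{y+1}=u_y$), but the case should be acknowledged rather than excluded.
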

\begin{proof}
Assume that there is a least $\mathbb{Z}$-gap above $I$ and fix $a$ coding a sequence such that $(a)_{n+1} = (a)_n -1$ and $\inf_{n \in \omega} \{ (a)_n \} = I$. Since $(a)_0\notin I$ there is $\phi$ such that $(\mc{M}, S)\models \neg T(F((a)_0,\phi)\equiv \phi)$. By the properties of $F$ it follows that for every $n \in \omega$, $(\mc{M}, S)\models \neg T(F((a)_n,\phi)\equiv \phi)$. Let $D = \{F(a,\phi)\equiv \phi: a < (a)_0 \}$ and let $A = \{F(a,\phi)\equiv \phi: a\in I\}$. It follows that for every $c < (a)_0$, $(\mc{M},S)\models T(F(c,\phi)\equiv \phi)$ iff $F(c,\phi)\equiv\phi \in A.$ So by Theorem \ref{separability-theorem}, $A$ is separable from $D$; therefore $I$ is separable.
\end{proof}

This completes the picture for accessible $F$. In particular, we have a complete picture for which cuts can be $I(\IDC_S)$. If $\omega$ is strong, then every cut can be $I(\IDC_S)$ for some $S$, and if $\omega$ is not strong, then only those cuts which have no least $\mathbb{Z}$-gap above it can be $I(\IDC_S)$. What about for cuts which are $F$-correct for additive $F$, like $I(\QC_S)$?

\begin{lem}
Suppose $(\mc{M}, S)\models \csm$, $t\in M$ is a full binary tree of height $c$ labelled with sentences, such that $t\upharpoonright_T:= \{s\in \{0,1\}^{<\omega}\ \ | \ \ T(t(s))\}$ has arbitrarily long branches. Then $t\upharpoonright_T$ has an infinite coded branch.
\end{lem}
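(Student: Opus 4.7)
The plan is to deduce the conclusion by a single application of Smith's definable $S$-saturation (\cite[Theorem 2.19]{smith-nonstandard-def}, used earlier in the proof of Theorem \ref{separability-theorem}) to an $\mc{M}$-coded sequence of $\lpa$-formulas that encodes ``$m$ codes a $T$-correct branch of $t$ of length at least $i$''.

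The key step is to construct, for each $i \in \omega$, an element $\phi_i(x) \in \Form^{\mc{M}}$ with one free variable $x$ satisfying
\[
(\mc{M}, S)\models T(\phi_i(\underline{m})) \iff \len(m) \geq i \text{ and } (\mc{M}, S)\models T(t(m\restriction i))
\]
for every $m \in M$. My candidate is
\[
\phi_i(x) \;:=\; \len(x) \geq i \;\wedge\; \bigvee_{s \in \{0,1\}^i} \bigl(x\restriction i = s \;\wedge\; t(s)\bigr),
\]
where the disjunction ranges over the $2^i$ binary strings of length $i$, each $t(s) \in \Sent^{\mc{M}}$ is the (fixed, possibly nonstandard) label of $s$ in $t$, and ``$x\restriction i = s$'' is a quantifier-free $\lpa$-formula. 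Since $i$ is standard, $\phi_i$ is genuinely an element of $\Form^{\mc{M}}$, and because the map $i \mapsto \phi_i$ is $\mc{M}$-definable from $t$, the sequence $\anglebracket{\phi_i : i \in \omega}$ is $\mc{M}$-coded. The displayed equivalence follows by unwinding with the compositional axioms of $\csm$ together with correctness of $T$ on atomic $\lpa$-sentences: among the $2^i$ disjuncts of $\phi_i(\underline{m})$, exactly the one indexed by $s = m\restriction i$ can be $T$-true, which isolates $T(t(m\restriction i))$.

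Once this sequence is in place, the hypothesis that $t\upharpoonright_T$ has a branch of length $i$ for every $i \in \omega$ yields, for each such $i$, an element $m_i \in M$ with $(\mc{M}, S)\models T(\phi_j(\underline{m_i}))$ for all $j \leq i$. Smith's definable $S$-saturation then produces a single $m \in M$ satisfying $(\mc{M}, S)\models T(\phi_i(\underline{m}))$ for every $i \in \omega$. Translating back via the equivalence, $\len(m)$ exceeds every standard integer and $m\restriction i \in t\upharpoonright_T$ for each $i \in \omega$, so $m$ codes the desired infinite branch.

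The main thing to be careful about is that the evaluation of $T(\phi_i(\underline{m}))$ uses only standard-length compositional unwindings (one outer conjunction and a standard-length disjunction with $2^i$ inner conjunctions), so that the bare compositional content of $\csm$ is enough and no further induction on or reflection over the truth predicate is needed. With that bookkeeping, the conclusion is a direct invocation of Smith's theorem.
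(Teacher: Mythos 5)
Your proof is correct, and it reaches the conclusion by a genuinely more direct route than the paper's. Both arguments apply Smith's definable $S$-saturation once to an $\mc{M}$-coded sequence of formulas built from $t$, but with different sequences. The paper uses $\phi_n(x) := \bigwedge_{\len(s)\leq n}(t(s)\equiv s\in x)$, so the $a$ that Smith's theorem produces only codes a \emph{set} matching $t\upharpoonright_T$ on every standard level; to finish, one must still pass from this coded tree-like set to a coded infinite branch, which requires an additional overspill/internal-K\"onig step. Your $\phi_i$ instead asserts directly that $x$ has length at least $i$ and that $x\restriction i$ is an $i$-th-level node of $t\upharpoonright_T$, so the $m$ produced by Smith's theorem \emph{is} the coded branch, and no extraction step is needed. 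The bookkeeping you flag is exactly the crucial point: the compositional unwinding of $T(\phi_i(\underline{m}))$ traverses only standard-depth structure (the outer conjunction, the $2^i$-fold disjunction under some fixed association, and the standard-complexity arithmetic subformulas $\len(x)\geq i$ and $x\restriction i = s$) before bottoming out at the opaque nonstandard sentences $t(s)$, so bare $\csm$ suffices. Two minor quibbles: ``$x\restriction i = s$'' is $\Delta_0$ rather than literally quantifier-free in $\lpa$ (harmless, since only standard complexity is needed), and one should fix a specific nesting for the $2^i$-way disjunction so that $\phi_i$ is an unambiguous element of $\Form^{\mc{M}}$, just as the paper does for $\bigvee_{i\leq c}$.
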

\begin{proof}
Consider the following sequence of formulae
\[\phi_n(x):= \bigwedge_{s:\len(s)\leq n} \bigl(t(s)\equiv s\in x\bigr).\]
The above conjunction is of the form $(\phi_{s_0} \wedge (\phi_{s_1} \wedge (\ldots )\ldots)$ where $\{s_i\}_{i<{2^n}}$ is an enumeration of all binary sequences of length $\leq n$ according to the length-first lexicographic ordering. By Smith's result \cite[Theorem 2.19]{smith-nonstandard-def} there is  $a\in M$ such that for all $n \in \omega$, $T(\phi_n(a))$ holds. Hence $\{s\in \{0,1\}^{<\omega}\ \ | \ \ s\in a\}$ is an infnite finitely branching tree, so it has a coded infinite branch, $b$. Since $b\subseteq a$, for every $i\in \omega$ we have $(\mc{M}, S)\models T(b(i))$.
\end{proof}

\begin{prop}\label{wsr-or-no-least-plus}
Let $F$ be an additive idempotent sentential operator. Suppose $(\mc{M}, S)\models\csm$ and $I\subseteq_{end} \mc{M}$ is such that
\[I = \{x : \forall y\leq x \forall \phi \bigl(T(F(y,\phi))\equiv T(\phi)\bigr)\}.\]
Then either there is no least $+$-closed gap above $I$ or $I$ is separable.
\end{prop}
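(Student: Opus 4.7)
Following the strategy of Proposition~\ref{wsr-or-no-least-z}, assume there is a least $+$-closed gap $G$ above $I$, fix $x_0\in G$, and pick $\phi_0\in \Sent^{\mc{M}}$ with $(\mc{M},S)\models \neg T(F(x_0,\phi_0)\equiv \phi_0)$. The plan is to apply Theorem~\ref{separability-theorem} to a suitable set $D$ of sentences of the form $F(c,\psi)\equiv \psi$ with $c\le x_0$, and then transfer the resulting separability to $I$ using Proposition~\ref{prop_closure_sep_preim}, exactly as in the accessible case. The heart of the matter is the equivalence
\[
(\mc{M},S)\models T(F(c,\psi)\equiv \psi)\iff c\in I,\qquad c\le x_0,
\]
where, in the accessible setting, the same $\psi=\phi_0$ works for every $c$: compositionality forces the truth value of $F(c,\phi_0)\equiv \phi_0$ to be constant along $x_0$'s $\mathbb{Z}$-gap, which under the accessible least-gap hypothesis coincides with $G\cap[0,x_0]$.

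The main obstacle in the additive case is that compositionality only makes $y\mapsto T(F(y,\phi_0))$ constant on a single $\mathbb{Z}$-gap, whereas the least $+$-closed gap $G$ typically decomposes into many $\mathbb{Z}$-gaps, so $\phi_0$ alone cannot witness failure of $F$-correctness across all of $G\cap[0,x_0]$. To repair this, I plan to invoke the K\"onig-style Lemma proved just before the Proposition. Using additivity $F(x_0,\phi_0)=F(c,F(x_0-c,\phi_0))$, I would construct a full binary tree of height $x_0$ in $\mc{M}$ whose nodes are labelled by sentences of the form $F(d,\psi_s)\equiv \psi_s$, choosing the $\psi_s$'s along each path so that the $S$-true part of the tree has arbitrarily long branches; an infinite coded branch, guaranteed by the tree Lemma, supplies an $\mc{M}$-coded family $\{\psi_c\}_{c\le x_0}$ of witnesses for which $\neg T(F(c,\psi_c)\equiv \psi_c)$ holds throughout $G\cap[0,x_0]$.

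With such a coded family in hand, set $D=\{F(c,\psi_c)\equiv \psi_c : c\le x_0\}$; by Theorem~\ref{separability-theorem} the true part of $D$ is separable from $D$. Combining this with the characterization ``$c\in I$ iff $kc<x_0$ for every $k\in\omega$'' (which follows from the $+$-closure of $I$ established in the preceding Theorem together with the leastness of $G$), and with Proposition~\ref{prop_closure_sep_preim}, one transfers separability to $I\cap[0,x_0]$, and hence to $I$ via Proposition~\ref{def-are-sep}. The hardest step will be the tree construction: one must arrange the labels so that the coded branch extracted from the tree Lemma really does cover every $\mathbb{Z}$-gap of $G\cap[0,x_0]$, and not merely $x_0$'s own $\mathbb{Z}$-gap; this is the genuinely new difficulty absent from the accessible case.
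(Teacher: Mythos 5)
Your high-level outline is pointing in the right direction: the König-style Lemma is indeed the tool, you do want a coded branch supplying a coded family of witnesses, and the final bookkeeping does go through Theorem~\ref{separability-theorem} and Proposition~\ref{prop_closure_sep_preim}. But the step you yourself flag as ``the genuinely new difficulty'' is precisely where the gap sits, and your tentative plan for closing it cannot work as stated. You propose a full binary tree of height $x_0$ whose levels are indexed by $c\le x_0$, with the coded branch delivering a family $\{\psi_c\}_{c\le x_0}$ indexed by \emph{all} $c\le x_0$. A branch in a binary tree contains one node per level, so even if you had a tree of height $x_0$, its branch would give a sequence indexed by $i$ ranging over $\mathrm{dom}(b)$, not witnesses for every $c\le x_0$. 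Moreover, a labelling in which every level $c\le x_0$ carries a formula $\neg(F(c,\psi)\equiv\psi)$ is doomed from the start: for every \emph{standard} $c$, compositionality forces $T(F(c,\psi))\equiv T(\psi)$ for every $\psi$, so no witness can exist at standard levels and the $T$-true part of the tree cannot even reach past $\omega$.

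The device you are missing is to use a \emph{halving sequence}: pick $a$ coding a sequence with $(a)_{n+1}=\lfloor (a)_n/2\rfloor$ and $\inf_{n\in\omega}(a)_n=I$, and build the binary tree of height $c=\mathrm{len}(a)$ (so, roughly $\log_2 x_0$, not $x_0$). The level-$n$ label records whether $F((a)_n,\cdot)$ fails to be $T$-correct at the current ``root'' $t_s^*$, and the left/right children record (via additivity) the failure of the first or second half of the split $F((a)_n,\cdot)\approx F((a)_{n+1},\cdot)\circ F((a)_{n+1},\cdot)$. This is exactly what makes the $T$-true part have arbitrarily long branches, because at least one half must already fail. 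The coded branch $b$ then gives witnesses only at the (logarithmically many) indices $i<\mathrm{len}(a)$, and one checks $T(b(i))\iff i\in\omega\iff (a)_i\notin I$. Theorem~\ref{separability-theorem} gives separability of the set of truths in $\mathrm{im}(b)$, and Proposition~\ref{prop_closure_sep_preim} transfers this to $\mathrm{im}(a)\cap I$. The last step you do not account for is Proposition~\ref{prop_supremum_separabilit}: since $I$ is closed under $+$ and $a$ halves at each step, $\sup(I\cap\mathrm{im}(a))=I$, so separability of the sparse sample $\mathrm{im}(a)\cap I$ already yields separability of $I$. Without this you would be stuck needing witnesses at every $c\le x_0$, which, as noted, is impossible.
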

\begin{proof}
Suppose there is a least $+$-closed gap above $I$ and let $a$ code a sequence such that $(a)_{n+1} = \lfloor\frac{(a)_n}{2}\rfloor$ and $\inf_{n\in\omega} (a)_n = I.$ Let $c$ be the length of $a.$ Observe that $\sup(I\cap im(a)) = I$, so by Proposition \ref{prop_supremum_separabilit} it is sufficient to show that $I\cap im(a)$ is separable. Fix $\phi$ such that $(\mc{M}, S)\models \neg T(F((a)_0,\phi)\equiv \phi)$. Then for every $n$ it holds that
\[(\mc{M}, S)\models \neg T(F((a)_{n+1},\phi)\equiv \phi)\vee  \neg T\bigl(F((a)_{n+1},F((a)_{n+1},\phi))\equiv F((a)_{n+1},\phi)\bigr).\]
 Define the labelling $t$ of a full binary tree of height $c$ by recursion as follows:

\begin{align*}
     t_\varepsilon =& \neg (F((a)_0,\phi)\equiv \phi) &\\
     t_{s^\frown 0} =& \neg \bigl(F((a)_{n+1},t_s^{*})\equiv t_s^{*}\bigr)& \text{if }\len(s) = n\\
     t_{s^\frown 1} =& \neg \bigl(F((a)_{n+1},F((a)_{n+1},t_{s}^*))\equiv F((a)_{n+1},t_{s}^*)\bigr) & \text{if }\len(s) = n
 \end{align*}
 In the above, $x^*$ is the unique sentence $\psi$ such that there is $\theta$ such that $x = \neg (\theta\equiv \psi)$. By our assumption, $t\upharpoonright_T$ has arbitrarily long branches, so there is an infinite coded branch $b$ of $t$ such that for every $i\in\omega$ $(\mc{M}, S)\models T(b(i))$. Moreover, by the construction of $t$, for every $i\in \text{dom}(b)$, 
 \begin{center}
    $(\mc{M}, S) \models T(b(i))$ iff $i\in\omega.$
 \end{center}
 It follows that the set $A=\{\psi \in im(b) : T(\neg\psi)\}$ is separable.
 Observe that for every $i < \len(b)$ we have 
$$(a)_i\in I \iff T(\neg b(i)) \iff b(i)\in A.$$ 
 Hence $im(a)\cap I = G^{-1}[A]$, where $G$ is the definable function $(a)_i \mapsto b(i)$. By Proposition \ref{prop_closure_sep_preim} this ends the proof.
\end{proof}

\begin{cor}\label{arithsat-accessible}
For a countable, recursively saturated $\mc{M} \models \PA$, the following are equivalent:
\begin{enumerate}
    \item\label{arithsat} $\mc{M}$ is arithmetically saturated, and
    \item\label{every-cut-idc} For every idempotent sentential operator $F$ with template $\Phi(p, q)$ of depth 1, and every $F$-closed cut $I \e M$, there is $S$ such that $(\mc{M}, S) \models \csm$ and $$I = \{ x : \forall y \leq x \forall \phi \bigl(T(F(y, \phi)) \equiv T(\phi)\bigr) \}.$$
\end{enumerate}
\end{cor}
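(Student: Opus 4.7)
The plan is to reduce both implications to the machinery already developed: Theorem \ref{nonlocal}, Proposition \ref{wsr-vs-least-z} (separability of $\omega$-coded cuts under strength of $\omega$), and Propositions \ref{wsr-or-no-least-z} and \ref{wsr-or-no-least-plus} (necessary conditions on a cut that is realized as an $F$-correctness cut).

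For $\eqref{arithsat}\Rightarrow\eqref{every-cut-idc}$, I would assume $\mc{M}$ is arithmetically saturated, so $\omega$ is a strong cut. Given $F$ with template of depth $1$ and an $F$-closed cut $I\e M$, it suffices by Theorem \ref{nonlocal} to show that $I$ is either separable or has no least $F$-gap above it. Suppose $I$ has a least $F$-gap, witnessed by some $x > I$. In the accessible case this immediately gives $I = \inf\{x - n : n \in \omega\}$. In the additive case, $F$-closedness of $I$ (i.e., closure under addition) together with $x > I$ implies $\lfloor x/n\rfloor > I$ for every $n \in \omega$, while the failure of the ``no least $F$-gap'' condition means $\{\lfloor x/n\rfloor : n \in \omega\}$ is coinitial with $M\setminus I$. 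Either way $I$ is $\omega$-coded from above, so Proposition \ref{wsr-vs-least-z}\eqref{strength-least-z} yields separability, and Theorem \ref{nonlocal} produces the required $S$.

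For the contrapositive of $\eqref{every-cut-idc}\Rightarrow\eqref{arithsat}$, I would assume $\omega$ is not strong and exhibit a single failing instance. Take the accessible operator $F(x, \phi) = \bigvee_x \phi$ with template $\Phi(p, q) = q \vee p$ (depth $1$), fix any nonstandard $c \in M$, and set $I = \{y \in M : y + n < c \text{ for all } n \in \omega\}$. Since $F$ is accessible, every cut is $F$-closed. By construction $I = \inf\{c - n : n \in \omega\}$ is $\omega$-coded from above, and the $\mathbb{Z}$-gap of $c$ is the least $\mathbb{Z}$-gap above $I$. Proposition \ref{wsr-vs-least-z}\eqref{weak-least-z} then forces $I$ to be non-separable. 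If $\eqref{every-cut-idc}$ held, some $S$ would realize $I$ as the $F$-correctness cut, and Proposition \ref{wsr-or-no-least-z} would then force $I$ to be either separable or to have no least $\mathbb{Z}$-gap above it---both impossible.

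The only delicate point is the verification in the additive case of the first direction: one must check carefully that a least $+$-gap above a $+$-closed $I$ really $\omega$-codes $I$ from above via $\lfloor x/n\rfloor$, using additive closure of $I$ to see $\lfloor x/n\rfloor \notin I$ and the negation of the ``no least $F$-gap'' clause to see coinitiality with $M\setminus I$. Past this routine check, both directions assemble directly from the cited results of this section.
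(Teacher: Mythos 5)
Your argument is correct and follows the same route as the paper: reduce the forward direction to Theorem \ref{nonlocal} via Proposition \ref{wsr-vs-least-z}\eqref{strength-least-z} (separability of $\omega$-coded cuts when $\omega$ is strong), and the converse to Propositions \ref{wsr-vs-least-z}\eqref{weak-least-z} and \ref{wsr-or-no-least-z}/\ref{wsr-or-no-least-plus}. The one place you deviate mildly is the converse: you exhibit a single failing instance (the accessible $F(x,\phi)=\bigvee_x\phi$ together with an $\omega$-coded-from-above cut $I$), which already negates the universal statement in \eqref{every-cut-idc}, whereas the paper runs the same construction uniformly using the $\odot$ notation so that it applies to an arbitrary $F$ (accessible or additive), invoking both Propositions \ref{wsr-or-no-least-z} and \ref{wsr-or-no-least-plus}. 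Both are valid; the paper's version has the marginal advantage of showing that \emph{every} operator $F$ admits a problematic cut, but for the stated corollary one counterexample suffices, so your streamlining loses nothing. Your verification that, in the additive case, a least $F$-gap above an additively closed $I$ forces $\lfloor x/n\rfloor > I$ and that these quotients are coinitial with $M\setminus I$ is the same routine check the paper compresses into the remark that nonseparability would prevent $I$ from being $\omega$-coded.
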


Note that the implication $\eqref{every-cut-idc} \implies \eqref{arithsat}$ holds in more generality: it does not rely on $\Phi$ having syntactic depth $1$.

\begin{proof}
We show $\eqref{arithsat} \implies \eqref{every-cut-idc}$. Suppose $\omega$ is a strong cut. Let $a \odot n$ be $a - n$, if $F$ is accessible, and $\lfloor \frac{a}{n} \rfloor$, if $F$ is additive. By Proposition \ref{wsr-vs-least-z}, if $I$ is not separable, then $I$ is not $\omega$-coded, and so there is no $a > I$ such that $\inf(\{a \odot n : n \in \omega \}) = I$. Therefore, every $F$-closed cut $I$ is either separable or has no least $F$-gap above it. The result follows from Theorem \ref{nonlocal}.

Conversely, if $\mc{M}$ is not arithmetically saturated, let $I \e M$ be any cut with a least $F$-gap above it. For example, fix a nonstandard $c$ and let $I = \inf( \{ c \odot n : n \in \omega \})$. Since $\omega$ is not strong, by Proposition \ref{wsr-vs-least-z}, $I$ is not separable. It follows by Proposition \ref{wsr-or-no-least-z} for accessible $F$, and by Proposition \ref{wsr-or-no-least-plus} for additive $F$, that there is no $S$ such that $(\mc{M}, S) \models \csm$ and $$I = \{ x : \forall y \leq x \forall \phi (T(F(y, \phi)) \equiv T(\phi)) \}.$$
\end{proof}

\section{Disjunctively Correct Cut}\label{dc-section}


We proceed to the strongest correctness property, that of full disjunctive correctness $(\dc_S)$. As usual we shall focus on $I(\dc_S)$. The first proposition states that the intuitive strength of full disjunctive correctness is reflected in the closure properties of $I(\dc_S)$:

\begin{prop}
For every $(\mc{M},S)$, $I(\dc_S)$ is closed under multiplication.
\end{prop}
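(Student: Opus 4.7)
The plan is to show that if $a, b \in I(\dc_S)$, then every $c \leq ab$ lies in $\dc_S$. Fixing such a $c$ and a sequence $\langle \phi_i : i \leq c \rangle$, I aim to establish $T(\bigvee_{i \leq c}\phi_i) \iff \exists i \leq c\, T(\phi_i)$. Write $c = qa + r$ with $0 \leq r < a$, so that $q \leq b$. The main tool is the \emph{peel-off identity}: for any $m, k \in M$, one has $\bigvee_{i \leq m+k}\phi_i = \bigvee_{j \leq k}\psi_j$ where $\psi_0 = \bigvee_{i \leq m}\phi_i$ and $\psi_j = \phi_{m+j}$ for $j \geq 1$. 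This is a bona fide syntactic identity in $\mc{M}$, provable by $\lpa$-induction on $k$.

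Applied with $k = a$, together with $a \in \dc_S$, the peel-off identity yields the reduction
\[
T\left(\bigvee_{i \leq c}\phi_i\right) \iff T\left(\bigvee_{i \leq c-a}\phi_i\right) \vee T(\chi_q),
\]
where $\chi_q := \bigvee_{i \in [c-a+1, c]}\phi_i$ is a length-$a$ chunk whose truth is correctly computed via $a - 1 \in \dc_S$. For standard $q$, iterating this reduction $q$-many times with base case $T(\bigvee_{i \leq r}\phi_i) \iff \exists i \leq r\, T(\phi_i)$ (from $r \in \dc_S$) suffices. For nonstandard $q \leq b$, however, the external iteration fails, and I will internalize it using $b \in \dc_S$.

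To this end, construct the chunks $\chi_0 = \bigvee_{i \leq r}\phi_i$ and $\chi_k = \bigvee_{i \in [r + (k-1)a + 1, r+ka]}\phi_i$ for $1 \leq k \leq q$. The sentence $\Gamma := \bigvee_{k \leq q}\chi_k$ has outer length $q + 1 \leq b + 1$, so $q \in \dc_S$ (from $b \in I(\dc_S)$) yields $T(\Gamma) \iff \exists k \leq q\, T(\chi_k)$, which combined with $r, a-1 \in \dc_S$ gives $T(\Gamma) \iff \exists i \leq c\, T(\phi_i)$. The main obstacle is bridging $T(\bigvee_{i \leq c}\phi_i)$ and $T(\Gamma)$, since these are syntactically distinct sentences. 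My plan for this bridging is to apply the peel-off identity at successive nested depths inside $\bigvee_{i \leq c}\phi_i$ (using $a \in \dc_S$ at each layer to validate the local step) and then combine the $q$ layers via a single invocation of $q \in \dc_S$ that captures the full recursion, effectively matching the flat evaluation of $\Gamma$. Justifying this internalization is the technical heart of the argument and relies crucially on having both $a$ and $b$ in $I(\dc_S)$.
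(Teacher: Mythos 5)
Your proposal identifies the right obstacle but does not overcome it, and the gap is fatal as stated. The "bridging" step you postpone — relating $T(\bigvee_{i\leq c}\phi_i)$ to $T(\Gamma)$ where $\Gamma=\bigvee_{k\leq q}\chi_k$ — is not something disjunctive correctness alone can do. The two sentences are syntactically distinct trees; each single reassociation or commutation of disjuncts is a one-step consequence of the compositional axioms, but passing from the left-associated $\bigvee_{i\leq c}\phi_i$ to the blocked $\Gamma$ requires iterating such one-step manipulations nonstandardly many times. A "single invocation of $q\in\dc_S$" does not internalize such an iteration: $q\in\dc_S$ only asserts correctness of one disjunction of length $q+1$, and gives you no handle on an internal recursion that compares two different nonstandard-length syntax trees layer by layer. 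Likewise your iterated use of the peel-off identity with $a\in\dc_S$ at each layer is an external induction that stops cold at $\omega$. In short, the argument needs some form of induction in the language with $S$, and nothing in your proposal supplies it.

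The paper closes exactly this gap by bringing in a different principle: it first invokes the result (from \cite{cieslinski-lelyk-wcislo-dc}) that $\dc_S\subseteq\SInd_S$, where $\SInd_S$ is the cut on which sequential induction holds for coded sequences of sentences. Then the proof defines a partial-disjunction sequence $\theta_i=\bigvee_{j\leq ic}\phi_j$ (similar in spirit to your $\chi_k$'s, but cumulative rather than disjoint blocks), shows each step $T(\lnot\theta_i)\to T(\lnot\theta_{i+1})$ by a nested inner induction of length $c$ using only the compositional axiom for a single $\vee$, and transports the conclusion across nonstandard lengths by applying $\SInd_S$ both at the inner and the outer level. That is the tool that does the internalization you describe informally; without it, your "chunk" $\Gamma$ remains syntactically disconnected from the original disjunction. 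If you want to salvage your decomposition, you would need to first establish $\dc_S\subseteq\SInd_S$ and then replace your appeal to $q\in\dc_S$ for the bridging with an application of sequential induction of the appropriate lengths.
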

\begin{proof}
We shall use a result from \cite{cieslinski-lelyk-wcislo-dc}: define the sequential induction cut $\SInd_S$ to be the set of those $c$ such that the following is true in $(\mc{M},S):$
\[\forall x\leq c \forall \anglebracket{\phi_i: i\leq x}\bigl(T(\phi_0)\wedge \forall y< x (T(\phi_y)\rightarrow T(\phi_{y+1}))\bigr)\rightarrow \forall i\leq x T(\phi_i).\]
Then the proof of \cite[Theorem 8]{cieslinski-lelyk-wcislo-dc} directly shows that $\dc_S\subseteq \SInd_S$. Now we proceed to the main argument: fix any $c\in \dc_S$ and let $b\leq c^2$. Fix any $d,r$ such that $b = dc+r$ and $r < c$. Fix any $\anglebracket{\phi_i: i\leq b}$ and assume first that $T(\bigvee_{i\leq b}\phi_i)$ and, aiming at a contradiction that for every $i\leq b$, $T(\neg \phi_i)$. Define the auxiliary sequence of length $d$: for each $i\leq d$ let $\theta_i = \bigvee_{j\leq ic}\phi_j$ and let $\theta_{d+1} = \phi_b.$ We show that for every $i< d+1$, $T(\neg \theta_i)\rightarrow T(\neg \theta_{i+1}).$ Fix any $i$ and assume $T(\neg \theta_i)$. Let $c'$ be $c$ if $i<d$ and $r$ if $i = d$. Consider the sequence $\psi_k = \bigvee_{j\leq ic+k}\phi_j$. We claim that for any $k< c'$ $T(\neg\psi_k)\rightarrow T(\neg\psi_{k+1}).$ Indeed, fix any $k<c'$ and assume $T(\neg\psi_k)$. Observe that by the definition of $T$, the definition of $\psi_{k+1}$ and the compositional axioms we have 
\begin{multline*}
T(\neg\psi_{k+1})\equiv S(\neg\psi_{k+1},\emptyset) \equiv S(\neg (\psi_k\vee \phi_{ic+k+1}),\emptyset)\equiv S(\neg \psi_k,\emptyset)\wedge S(\neg\phi_{ic+k+1},\emptyset) \\
\equiv T(\neg\psi_k)\wedge T(\neg\phi_{ic+k+1}).
\end{multline*}
The last sentence is clearly true by our assumptions. Hence, since $c' \in \SInd_S$, we conclude that $T(\neg \psi_{c'})$. Since by definition $\psi_{c'} = \theta_{i+1}$, we established that for any $i<d$ $T(\neg\theta_{i})\rightarrow T(\neg\theta_{i+1}).$ Since $d+1\in \SInd_S$, we conclude that $T(\neg\theta_{d+1})$. By definition, we obtain that $T(\neg \bigvee_{i\leq b} \phi_i)$, which contradicts our assumption. 

Now assume that for some $e\leq b$, $T(\phi_e)$ holds. In particular, it holds that $T(\bigvee_{i\leq e}\phi_i)$. Let us fix $d', r'$ such that $b-e = d'c + r'$ and for $j\leq d'$ define $\theta_{j} = \bigvee_{i\leq e+ jc}\phi_i$ and $\theta_{d'+1} = \bigvee_{i\leq b}\phi_i$. As in the above proof we show that for each $j\leq d'$, $T(\theta_j)\rightarrow T(\theta_{j+1})$ and obtain $T(\bigvee_{i\leq b}\phi_j)$, which concludes the proof of the reverse implication and the whole argument.
\end{proof}

We conclude with a limitative result which shows that methods used to prove the main results of previous sections are unsufficient for obtaining the analogous results in the context of $\dc_S$. This is because, as conjectured, our methods show that, in an arithmetically saturated model, any cut can be characterized as $I(\IDC_S)$ for some regular satisfaction class which satisfies the internal induction axiom, For such a satisfaction class $S$, $S(\phi,\emptyset)$ behaves like a truth predicate satisfying the axioms of $\textnormal{CT}^-$ and we have the following small insight. Below $\textnormal{Con}_{\PA}(x)$ is a formula with a free variable $x$ which canonically expresses that there is no proof of $0=1$ in $\PA$ whose code is smaller than $x$.

\begin{prop}
Suppose that $(\mc{M},S)\models \csm$, $S$ is regular and satisfies the internal induction axiom. Then, for every $a\in \dc_S$, $\mc{M}\models \textnormal{Con}_{\PA}(a).$
\end{prop}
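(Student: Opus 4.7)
The plan is to argue by contradiction. Suppose $\mc{M} \not\models \textnormal{Con}_{\PA}(a)$, and fix $\pi \in M$ coding a $\PA$-proof of $0=1$ with $\pi < a$; let $n$ be its length, so $n < a$ and $(\pi)_n = (0=1)$. The aim is to show $T((\pi)_i)$ for every $i \leq n$, whereupon $T(0=1)$ contradicts the compositional clause for atomic equations. For the induction, observe that (as shown in the proof of the preceding proposition) $\dc_S \subseteq \SInd_S$, so $a \in \SInd_S$; since $\SInd_S$ is downward closed directly from its definition, $n \in \SInd_S$ as well. I then apply $\SInd$ at $n$ to the sequence of cumulative conjunctions $\chi_i := \bigwedge_{j \leq i} (\pi)_j$.

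For the base, $T(\chi_0) = T((\pi)_0)$, and $(\pi)_0$ is a $\PA$ or logical axiom. Under regularity and internal induction, $T$ evaluates all $\PA$ axioms as true: instances of the induction schema are covered precisely by internal induction, while the (standard) basic axioms and the logical axioms are covered by compositionality and regularity. For the inductive step, compositionality yields $T(\chi_{y+1}) \equiv T(\chi_y) \wedge T((\pi)_{y+1})$, so it suffices to prove $T(\chi_y) \to T((\pi)_{y+1})$ for each $y < n$. If $(\pi)_{y+1}$ is an axiom, the conclusion is unconditional; if it is obtained by modus ponens from $(\pi)_j$ and $(\pi)_k$, where $(\pi)_k = ((\pi)_j \to (\pi)_{y+1})$ and $j, k \leq y$, one extracts $T((\pi)_j)$ and $T((\pi)_k)$ from $T(\chi_y)$ by iterating the compositional clause for conjunction, and then deduces $T((\pi)_{y+1})$ via the compositional clause for implication (generalization and other rules of the deductive system are handled analogously). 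Applying $\SInd_S$ at $n$ then yields $T(\chi_n)$, from which the same conjunctive extraction gives $T((\pi)_n) = T(0=1)$, the desired contradiction.

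The principal obstacle is the conjunctive extraction $T(\chi_y) \to T((\pi)_j)$ at nonstandard $y$, since a direct compositional unfolding would need to be iterated potentially nonstandard-many times and hence requires induction in the language extended by $S$. This is where the hypotheses become essential: by Remark \ref{rem_int_ind}, the assumption that $S$ is regular and satisfies internal induction yields $\PA^*$ in $(\mc{M}, X_\psi)$ for any appropriate $\psi$, supplying enough induction in the extended language both to justify this unfolding and to secure the truth of any nonstandard induction axioms that appear among the steps of $\pi$.
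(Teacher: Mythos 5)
Your approach is genuinely different from the paper's. The paper invokes a result from \cite{wcislyk_notes_on_bounded} producing a single (typically nonstandard) formula $\psi$ such that $T*\psi$ acts as a compositional truth predicate for all sentences of depth at most $a$; Remark \ref{rem_int_ind} then makes that partial truth predicate fully inductive, and $\Con_{\PA}(a)$ follows by the classical argument. Your plan --- a direct $\SInd$ induction along cumulative conjunctions of proof lines --- is more self-contained, and the skeleton is sound: $\dc_S\subseteq\SInd_S$ is established in the preceding proposition, $\SInd_S$ is downward closed by definition, and for nonstandard induction instances you do not even need Remark \ref{rem_int_ind}: unfolding $T$ of the induction sentence for $\phi$ via compositionality and regularity lands exactly on the internal induction axiom for $\phi$, so internal induction directly yields their truth.

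The gap is at the step you yourself flag as the principal obstacle. To extract $T((\pi)_j)$ for $j\leq y$ from $T(\chi_y)$ you appeal to Remark \ref{rem_int_ind}, i.e.\ to $\PA^*$ in $(\mc{M},X_\psi)$. But $\PA^*$ furnishes induction only for $\lpa\cup\{X_\psi\}$-formulas, where $X_\psi=\{x:T*\psi(x)\}$ for a \emph{single} $\psi\in\Form^{\mc{M}}$ with one free variable, whereas the extraction statement lives in $\lpa\cup\{S\}$: there is no fixed $\psi$ expressing, say, $T((\pi)_j)$ as $T*\psi(j)$, because $\psi(\underline{j})$ has fixed complexity while the complexity of $(\pi)_j$ (and a fortiori of $\chi_k$) is unbounded in $j,k$. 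Producing such a $\psi$ --- a uniform truth predicate for sentences of depth $\leq a$ --- is exactly the content of the external result the paper invokes, so as written your justification circles back to the paper's route rather than replacing it. The extraction can, however, be justified inside your framework without it: given $T(\chi_y)$, apply the $\SInd$-property at $y$ (available since $y\leq n\leq a$ and $\SInd_S$ is downward closed) to the reversed sequence $\theta_j:=\chi_{y-j}$, whose successor steps $T(\theta_j)\to T(\theta_{j+1})$ follow from a \emph{single} compositional unfolding $T(\chi_k)\equiv T(\chi_{k-1})\wedge T((\pi)_k)$; this yields $\forall k\leq y\ T(\chi_k)$, and one further compositionality step per $k$ gives $\forall k\leq y\ T((\pi)_k)$. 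With that repair your argument goes through.
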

\begin{proof}[Sketch]
Let $\textnormal{CT}^-(x)$ denote a formula of the language $\mc{L}_{\PA}\cup\{T\}$ with a free variable $x$ which expresses ''$T(x)$ satisfies Tarski's inductive truth conditions for sentences of logical depth at most $x$''. By inspection of the proof of Theorem 3.1 from \cite{wcislyk_notes_on_bounded} one sees that if $a\in \dc_S$, then there is (typically nonstandard) $\psi \in \mc{M}$ with a unique free variable such that $\mc{M}\models \Form_{\mc{L}_{\PA}}(\psi(x))$ and $(\mc{M}, S)\models \textnormal{CT}^-(a)[T*\psi(x)/T(x)].$ $T*\psi(x)$ denotes a formula with a free variable $x$ which expresses ''The result of substituting the numeral of $x$ for the unique free variable in $\psi$ is true'' (we use the notation from \cite{wcislyk_mpt}, Lemma 3.6) and $\textnormal{CT}^-(a)[T*\psi(x)/T(x)]$ is the formula obtained by substituting $T*\psi(x)$ for $T(x)$. As in \cite{wcislyk_mpt}, Lemma 3.7 we conclude that $T*\psi(x)$ satisfies full induction scheme in $(\mc{M}, S)$. It follows that no proof with a code less than $a$ can be the proof of $0=1$ from the axioms of $\PA$, because each formula in this proof is of complexity at most $a$ and all the premises are made true by $T*\psi(x)$.
\end{proof}

\section{Appendix}\label{app-section}
In this Appendix we indicate how to modify the proof of Theorem \ref{truth-closed-theorem} in order to obtain a much better-behaved satisfaction class. In particular we would like the constructed satisfaction classes to define a truth predicate. We start with introducing the notion of \textit{regularity}. The definition is taken from \cite{wcislo_satisfaction_definability}:

\begin{defn}
For every formula $\phi$ and a term substitution $\gamma$, $\phi[\gamma]$ denotes the result of subtituting $\gamma(v)$ for every free occurrence of $v$ in $\phi$, for every $v$ in the domain of $\gamma.$

We shall treat assignments as substitution of numerals: if $\alpha$ is an assignment, then by writing $\phi[\alpha]$ we treat $\alpha$ as a substitution which to every $v$ assigns the canonical numeral naming $\alpha(v)$ (i.e. the term expressing the sum of $0$ and $\alpha(v)$-many 1's). 
\end{defn}

For example, if $\alpha(v_0) = 3$ and $\alpha(v_1) = 1$, then $(\exists v_0(v_0 = v_1)\vee v_0+1 = v_2)[\alpha] = \exists v_0(v_0 = 0+1)\vee 0+1+1+1+1 = v_2.$

\begin{defn}[$\PA$] \label{def_structural_template}
	 If $\phi \in \Form_{\mc{L}_{\PA}}$, we say that $\widehat{\phi}$ is its \emph{structural template} iff 
	\begin{itemize}
	  	\item No constant symbol occurs in $\widehat{\phi}$.
		\item No free variable occurs in $\widehat{\phi}$ twice.
		\item No complex term containing only free variables occurs in $\widehat{\phi}$. 
		\item No variable occurs in $\widehat{\phi}$ both as a bound and as a free variable. 
		\item The formula $\phi$ can be obtained from $\widehat{\phi}$ by renaming bound variables and substituting terms for free variables in such a way that no variable appearing in those terms becomes bound.
		\item $\widehat{\phi}$ is the smallest formula with those properties (recall that we identify formulae with their G\"odel codes).
	\end{itemize}
We say that formulae $\phi, \psi$ are \emph{structurally similar}, $\phi \sim \psi$ iff $\widehat{\phi} = \widehat{\psi}.$ 


Suppose that $\kappa$ is an \textit{occurrence of a subformula} of $\phi$ (not necessarily direct). With $\kappa_{\widehat{\phi}}$ we denote the subformula of $\widehat{\phi}$ whose occurrence in $\widehat{\phi}$ corresponds to $\kappa$ (recall that $\phi$ and $\widehat{\phi}$ have the same syntactic structure). For a formula $\psi$, $[\psi]_{\widehat{\phi}}$ denotes the set $\{\kappa_{\widehat{\phi}} \ \ : \ \ \kappa \textnormal{ is an occurrence of $\psi$ in $\phi$}\}$.
\end{defn}

Note that the definition of structural similarity formalizes in $\PA$ and the relation is an equivalence relation, provably in $\PA$. Moreover we can assume that if $\phi$ is of standard complexity, then $\widehat{\phi}$ is a standard formula.

\begin{ex}
The structural template of $0=0\vee 0=0$ is $v_0=v_1\vee v_2 = v_3$, while the syntactic template of $\exists v_2(v_2+1 = v_1+1+1)$ is $\exists v_0(v_0+v_1 = v_2)$, where in both cases $v_i$ are chosen in such a way to minimize the formula. $0=0_{\widehat{0=0\vee 0=0}} = \{v_0=v_1, v_2=v_3\}.$

Formulae $\forall v_0(v_0 = v_1+1) \vee \neg(v_1 = v_0+1)$ and $\forall v_3(v_3 = v_2+1+1)\vee \neg (v_2+1 = v_0)$ are structurally similar.
\end{ex}

\begin{rem}[$\PA$]
For every two formulae $\psi,\phi$ such that $\psi$ is a subformula of $\phi$ (not necessarily direct), $\widehat{\psi}$ differs from every formula from the set $[\psi]_{\widehat{\phi}}$ at most by a permutation of free variables and renaming bound variables. For every $\theta\in [\psi]_{\widehat{\phi}}$ we shall denote with $\sigma_{\theta,\widehat{\psi}}$ the permutation of free variables such that $\sigma_{\theta,\widehat{\psi}}[\theta] = \widehat{\psi}$. 
\end{rem}

\begin{defn}[$\PA$]
Let $\phi$ be any formula and $\gamma$ be a term substitution such that $\widehat{\phi}[\gamma]$ differs from $\phi$ only modulo renaming the bound variables. Then for every assignment $\alpha$ for $\phi$ let $\widehat{\alpha}_{\phi}$ be the assignment for $\widehat{\phi}$ given by $\widehat{\alpha_{\phi}}(v) = \gamma(v)^{\alpha}$. We recall that for a term $t$ and assignment $\alpha$, $t^{\alpha}$ denotes the value of term $t$ under assignment $\alpha$.
\end{defn}

In other words, $\widehat{\alpha}_{\phi}$ assigns to a variable $v$, the value of the term $\gamma(v)$ under the assignment $\alpha$. For illustration assume that $\theta$ is either a true atomic sentence or the negation of a true atomic sentence and $F$ is a local idempotent operator for $\theta$ with a template $\Phi(p,q)$ (as in Definition \ref{local-id}). Then for any $x$, $\widehat{F(x)}$ can differ from $F(x)$ only in that
\begin{itemize}
    \item $\widehat{F(x)}$ may use different free and bound variables;
    \item each element of $[\theta]_{\widehat{F(x)}}$ is of the form $v_i = v_j$ for some variables $v_i$ and $v_j$ (if $\theta$ is a true atomic sentence) or each element of 
    $[\theta]_{\widehat{F(x)}}$ is of the form $\neg v_i = v_j$ for some variables $v_i$ and $v_j$ (if $\theta$ is the negation of a true atomic sentence. Moreover all the variables in $\widehat{F(x)}$ occur only in formulae from $[\theta]_{\widehat{F(x)}}$. In particular $\widehat{F(x)}$ is not a sentence.
\end{itemize}
Moreover, observe that, since $F(x)$ is a sentence, then $\emptyset$ is the unique assignment for $F(x)$. Hence, if $\theta$ is either $s = t$ or $\neg s=t$, where $s$ and $t$ are closed terms whose value is $a$, then $\widehat{\emptyset_{F(x)}}$ is constantly equal to $a$.

The above described situation of a local idempotent operator for $\theta$ will be the only one which we shall consider in this section.

\begin{defn}
    An $X$-satisfaction class $S$ is \emph{regular} if for every formulae $\phi, \widehat{\phi}\in X$ and every assignment $\alpha$ for $\phi$, $(\phi,\alpha)\in S$ iff $(\widehat{\phi},\widehat{\alpha}_{\phi})\in S$.
\end{defn}


We are now proceeding to strengthening Theorem \ref{truth-closed-theorem}. For notational reasons we write $\mc{M},\alpha \models \phi$ instead of $\mc{M}\models \phi[\alpha]$ to mean that a formula $\phi$ is satisfied in $\mc{M}$ by an assignment $\alpha$.

\begin{defn}
  Fix $\mc{M}\models \PA$, $X\subseteq M$, $\theta$, $F$ and $\Phi$ such that $F$ is local idempotent sentential operator for $\theta$ with syntactic template $\Phi(p,q)$. 
  \begin{enumerate}
      \item We say that a formula $\phi$ is an $F$-\textit{intermediate formula} if for some $x,$ $F(x)$ is a subformula of $\phi$ (not necessarily direct or proper) and $\phi$ is a subformula (not necessarily direct or proper) of $F(x+1)$. 
      \item For an intermediate formula $\phi$, the $F$-length of $\phi$ is the maximal $x$ such that $F(x)$ is a subformula of $\phi$. 
      \item Recall that $\textnormal{compl}(\phi)$ denotes the complexity of a formula $\phi$ (defined in Preliminaries). For an $F$-intermediate formula $\phi$, assignment $\alpha$ for $\widehat{\phi}$ and $x$ such that for some $n\in\omega$, $\textnormal{compl}(\phi) = \textnormal{compl}(F(x)) + n$, we say that $\alpha$ $(X,x)$-\textit{satisfies} $\widehat{\phi}$ if $\mc{M},\alpha\models\widehat{\phi}[A/F(x)]$ where $A$ is $0=0$ if $x\in X$ and $0=1$ otherwise and $\widehat{\phi}[A/F(x)]$ denotes the result of replacing in $\widehat{\phi}$ every occurrence of $F(x)_{\widehat{\phi}}$ with $A$. We say that $\alpha$, $X$-\textit{satisfies} $\widehat{\phi}$ if $\alpha$ $(X,x)-$satisfies $\phi$ where $x$ is the $F$-length of $\phi$.
  \end{enumerate}
\end{defn}

We note that the above definition makes sense, since $\widehat{\phi}[A/F(x)]$ is a formula of standard complexity (possibly with variables with nonstandard indices).

\begin{prop}\label{stupid_proposition}
Fix any $\mc{M}\models \PA$ and $X\subseteq M$ which is closed predecessor. For an arbitrary intermediate formula $\phi$ of nonstandard complexity and assignment $\alpha$ for $\widehat{\phi}$ the following are equivalent:
\begin{enumerate}
    \item $\alpha$ $X$-satisfies $\widehat{\phi}$.
    \item For every $x$ such that $\textnormal{compl}(\phi)-\textnormal{compl}(F(x))\in\omega$, $\alpha$ $(X,x)$-satsfies $\widehat{\phi}$.
    \item For some $x$ such that $\textnormal{compl}(\phi)-\textnormal{compl}(F(x))\in\omega$, $\alpha$ $(X,x)$-satsfies $\widehat{\phi}$.
\end{enumerate}
\end{prop}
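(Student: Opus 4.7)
The plan is to reduce all three statements to a single invariance property: for any $x, x'$ with $\textnormal{compl}(\phi) - \textnormal{compl}(F(x)), \textnormal{compl}(\phi) - \textnormal{compl}(F(x')) \in \omega$, the assignment $\alpha$ $(X, x)$-satisfies $\widehat{\phi}$ if and only if it $(X, x')$-satisfies $\widehat{\phi}$. Granting this, $(2) \Leftrightarrow (3)$ is immediate because the valid range is nonempty---it contains the $F$-length $y$ of $\phi$---and $(1)$ is just the instance $x = y$, which lies in the range since $\textnormal{compl}(F(y)) \leq \textnormal{compl}(\phi) < \textnormal{compl}(F(y+1))$ and the successive increments $\textnormal{compl}(F(z+1)) - \textnormal{compl}(F(z))$ equal a fixed standard positive constant $d$ depending only on $\Phi$.

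For the invariance, I would first note that the complexity condition forces $x \leq y$ and $y - x \in \omega$, because $(y-x)\cdot d = \textnormal{compl}(F(y)) - \textnormal{compl}(F(x)) \leq \textnormal{compl}(\phi) - \textnormal{compl}(F(x)) \in \omega$. Unwinding the recursion $F(z+1) = \Phi(\theta, F(z))$ and using the maximality of $y$, every occurrence of $F(x)$ in $\phi$ must lie inside some occurrence of $F(y)$; otherwise $F(y+1)$ would itself be a subformula of $\phi$. Hence $\widehat{\phi}[A_y/F(y)]$ and $\widehat{\phi}[A_x/F(x)]$ share the same syntactic structure outside the $F(y)$-positions of $\widehat{\phi}$: at each such position, the first contains the sentence $A_y$, while the second contains the standard-complexity formula $\widehat{F(y)}[A_x/F(x)]$.

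The crucial step will be to show that $\widehat{F(y)}[A_x/F(x)]$, evaluated under the restriction of $\widehat{\alpha}_{\phi}$ to the variables of an $F(y)$-position, has the same truth value in $\mc{M}$ as $A_y$. Because $F(y)$ is a sentence and $\theta$ is atomic, the restricted assignment is (up to the permutation $\sigma_{\theta,\widehat{F(y)}}$ from the template definition) precisely $\widehat{\emptyset}_{F(y)}$, which assigns each template variable the value of the corresponding constant of $\theta$ in $\mc{M}$. Hence each template copy of $\widehat{\theta}$ inside $\widehat{F(y)}$ evaluates to the truth value of $\theta$ in $\mc{M}$. The defining property of $\Phi$ ($\Phi(\top, q) \equiv q$ when $\mc{M} \models \theta$, and $\Phi(\bot, q) \equiv q$ otherwise) then gives, by external induction on $y - x$, that $\widehat{F(y)}[A_x/F(x)]$ has the same truth value as $A_x$ in $\mc{M}$. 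The invariance therefore reduces to $\mc{M} \models A_x \Leftrightarrow \mc{M} \models A_y$, i.e.\ $x \in X \Leftrightarrow y \in X$.

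This final reduction is where the closure assumption on $X$ is used: since $y - x \in \omega$, closure of $X$ under predecessors gives $y \in X \Rightarrow x \in X$, while the analogous closure of the complement (implicit in the setting inherited from Theorem \ref{truth-closed-theorem}, where $X$ is in fact closed under successors as well) yields the converse. The main technical obstacle will be the careful bookkeeping around structural templates: one has to verify that $\widehat{\phi}$ really does decompose at its $F(y)$-positions so that $\widehat{F(y)}[A_x/F(x)]$---with the specific variable renaming induced by $\widehat{\phi}$---appears as the corresponding subformula of $\widehat{\phi}[A_x/F(x)]$, and that the restriction of $\widehat{\alpha}_\phi$ matches $\widehat{\emptyset}_{F(y)}$ after applying the relevant permutation $\sigma_{\theta,\widehat{\psi}}$.
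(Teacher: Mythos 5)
The paper's own ``proof'' of Proposition~\ref{stupid_proposition} is a single sentence (``Follows immediately from the definition of $F$ and the fact that $\theta$, $\Phi$ are chosen so that $\Phi(\theta,q)$ is equivalent to $q$''), so there is no detailed argument to compare against. Your reduction to the invariance claim, and the observation that closure under predecessor alone only yields $(1)\Leftrightarrow(2)$ while the $(3)\Rightarrow(1)$ direction additionally needs closure under successors (so the stated hypothesis ``closed predecessor'' cannot be literally what is meant), are correct and represent a genuine sharpening of what the paper leaves unsaid.

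However, there is a gap that you inherit from the proposition's wording rather than repair. You write that ``the restricted assignment is (up to the permutation~$\sigma_{\theta,\widehat{F(y)}}$) precisely $\widehat{\emptyset}_{F(y)}$,'' which makes each template copy of $\theta$ evaluate to the truth value of $\theta$, and from there $\Phi(\top,q)\equiv q$ (resp.\ $\Phi(\bot,q)\equiv q$) lets you collapse $\widehat{F(y)}[A_x/F(x)]$ to $A_x$. But the proposition takes $\alpha$ to be an \emph{arbitrary} assignment for $\widehat{\phi}$, not one of the form $\widehat{\alpha'}_\phi$ for an assignment $\alpha'$ for $\phi$, and not necessarily constant. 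Since $\widehat{\phi}$ has pairwise-distinct free variables at the $\theta$-positions, a general $\alpha$ is free to make some template copy $v_i=v_j$ of $\theta$ true and another false, at which point the equivalence $\Phi(\theta,q)\equiv q$ buys you nothing. Concretely, take $\Phi(p,q)=p\vee q$, $\theta=(0{=}0)$, $\phi=F(y)$ with $y$ nonstandard and $y\notin X$, and an $\alpha$ with $\alpha(v_{a_0})=\alpha(v_{b_0})$ at the outermost atomic position. Then $\widehat{\phi}[A_y/F(y)]=A_y=(0{=}1)$ fails under $\alpha$, so $(1)$ fails, while $\widehat{\phi}[A_{y-1}/F(y-1)]=(v_{a_0}{=}v_{b_0})\vee A_{y-1}$ holds under $\alpha$, so $(3)$ holds --- and this persists even if $X$ is closed under both successor and predecessor. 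So the statement, read literally with arbitrary $\alpha$, is false; your step ``the restriction of $\widehat{\alpha}_\phi$ matches $\widehat{\emptyset}_{F(y)}$'' is exactly where you quietly pass from the proposition's $\alpha$ for $\widehat{\phi}$ to an assignment of the controlled form that is actually used downstream (in Theorem~\ref{truth-closed-reg-theorem}, where $\alpha$ is a constant function). You should flag explicitly that the proposition requires either $\alpha$ constant, or $\alpha=\widehat{\alpha'}_\phi$ for some assignment $\alpha'$ for $\phi$ (equivalently, that $\alpha$ assigns the value $a$ of the closed terms of $\theta$ to all variables in $\theta$-positions), since that is what makes your chain of equivalences go through and is what the application actually provides.
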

\begin{proof}
    Follows immediately from the definition of $F$ and the fact that $\theta$, $\Phi$ are chosen so that $\Phi(\theta,q)$ is equivalent to $q$.
\end{proof}

\begin{thm}\label{truth-closed-reg-theorem}
Let $\theta$ be either a true atomic sentence or a negation of a true atomic sentence and $F$ a local idempotent sentential operator for $\theta$ with template $\Phi(p,q)$. Let $X \subseteq M$ be separable, closed under successors and predecessors, and for each $n \in \omega$, $n \in X$ if and only if $\mc{M} \models \theta$. Then $\mc{M}$ has an expansion $(\mc{M}, S) \models \csm$ such that $X = \{ x \in M : (\mc{M}, S) \models T(F(x)) \equiv T(\theta) \}$ and $S$ is a regular satisfaction class.
\end{thm}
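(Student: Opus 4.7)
The construction follows the skeleton of the proof of Theorem \ref{truth-closed-theorem}, building finitely generated sets $F_0 \subseteq F_1 \subseteq \ldots$ with $\bigcup_i F_i = \Form^{\mc{M}}$ and regular full satisfaction classes $S_0, S_1, \ldots$ such that $(\mc{M}, S_i)$ is recursively saturated, $S_{i+1}\restr{F_i} = S_i \restr{F_i}$, and $(F(x), \emptyset) \in S_i$ iff $x \in X$ for every $F(x) \in F_i \cap D$ (where $D = \{F(x) : x \in M\}$). Two modifications are required relative to Theorem \ref{truth-closed-theorem}: each $F_i$ is additionally required to be closed under structural templates (if $\phi \in F_i$, then $\widehat{\phi} \in F_i$), and the ad hoc prescription of Lemma \ref{base-for-cl-f-iterates} — based on subformulas of $\Phi$ being equivalent to $q$, $\lnot q$, $\top$, or $\bot$ — is replaced by the template-guided prescription provided by $X$-satisfaction.

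In detail, at the successor step enlarge $F_i$ to $F_{i+1}$ by adjoining the next formula in an externally fixed enumeration and closing under subformulas, $F$-roots, and templates. Define a candidate partial class $S^\prime$ on $F_i \cup \Cl(F_{i+1} \cap D)$ so that it agrees with $S_i$ on $F_i$ and, on an intermediate formula $\phi$ with assignment $\alpha$, $(\phi, \alpha) \in S^\prime$ iff $\widehat{\alpha}_\phi$ $X$-satisfies $\widehat{\phi}$; by Proposition \ref{stupid_proposition} this depends only on $\widehat{\phi}$ and $\widehat{\alpha}_\phi$. A routine complexity induction shows that $S^\prime$ is compositional and regular on its domain. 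Correctness on iterates follows since $\widehat{F(x)}[A/F(x)]$ is just $A$, and $\widehat{\emptyset}_{F(x)}$ satisfies $A$ exactly when $A$ is $0=0$, i.e.\ exactly when $x \in X$.

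To produce $S_{i+1}$, invoke the regular strengthening of \cite[Lemma 3.1]{enayat-visser} supplied by the methods of \cite{wcislo_satisfaction_definability}: any regular partial $X'$-satisfaction class whose domain $X'$ is closed under templates extends to a full regular satisfaction class on some elementary extension of $\mc{M}$. The corresponding recursive theory — the elementary diagram of $\mc{M}$, the compositional axioms of $\csm$, the regularity axioms, agreement with $S_i$ on $F_i$, and agreement with $S^\prime$ on $\Cl(F_{i+1} \cap D)$ — is thus consistent. By chronic resplendency of $(\mc{M}, S_i)$, there is an expansion $(\mc{M}, S_i, S_{i+1})$ which is recursively saturated and with $S_{i+1}$ the desired regular full satisfaction class. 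Taking $S = \bigcup_i S_i \restr{F_i}$ yields the required regular full satisfaction class on $\mc{M}$.

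The main obstacle is verifying that $S^\prime$ is genuinely compositional on intermediate formulas: this requires tracking how the structural template decomposes along disjunctions, negations, and quantifiers, and in particular how the assignment $\widehat{\alpha}_\phi$ restricts and extends in sync with such decompositions, taking care of the renaming of bound variables. Regularity of $S^\prime$ is automatic once compositionality is established, since our prescription depends only on $\widehat{\phi}$; however, it is precisely this template-invariance that forces the closure of $F_i$ under $\widehat{\cdot}$ and the use of a regular Enayat--Visser lemma, without which the extension step could destroy regularity on formulas outside the closure.
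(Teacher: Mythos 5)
Your proposal follows the same route as the paper: the structural-template-driven $X$-satisfaction prescription (via Proposition~\ref{stupid_proposition}), closure of the approximating sets under templates and $F$-roots, and a chronic-resplendency argument to pull the extension back into $\mc{M}$ are all present in the paper's argument. The differences are ones of detail and of what is treated as a black box.

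Two caveats are worth spelling out. First, the step you attribute to ``the regular strengthening of [Lemma 3.1] supplied by the methods of [wcislo\_satisfaction\_definability]'' is not cited in the paper as an off-the-shelf result; the paper carries out the Enayat--Visser chain $\mc{M}_0 \preceq \mc{M}_1 \preceq \ldots$ explicitly, defining each $S'_n$ first on structural templates $\widehat{\phi}$ and then propagating to the $\sim$-class, with the regularity axioms (iii) and the infinitary $X$-satisfaction constraints (iv) kept together with compositionality (ii) throughout every finite-fragment consistency check. The nontrivial content of your ``regular Enayat--Visser lemma'' is essentially that chain, and in particular its base step (where the constraint on $\{F((a)_n) : n \in \omega\}$ must be honoured) is not a straightforward appeal to an existing statement. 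Second, you characterize the compositionality and regularity of $S'$ as following from ``a routine complexity induction'' and only later flag this as ``the main obstacle''; in the paper it is precisely here that the work lives. The paper's prescription (condition c) is applied only to constant assignments $\alpha$ on intermediate formulas, and the verification of compositionality requires the claim $(*)$ together with a careful bookkeeping of how $\widehat{\emptyset}_{\phi}$ restricts to occurrences $(\kappa^i)_{\widehat{\phi}}$ and matches $\widehat{\emptyset}_{\psi_i}$ up to the permutations $\sigma$ of free variables. Your last paragraph correctly identifies what has to be done, but the proof proposal stops short of doing it; as written it is a correct outline rather than a complete argument.
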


\begin{proof}
The initial structure of the argument is very similar to that used in proving Theorem \ref{truth-closed-theorem}. Let $D = \{ F(x) : x \in M \}$ and $A = \{ F(x) : x \in X \}$. Note that $A$ is separable from $D$. We build sequences $F_0 \subseteq F_1 \subseteq \ldots$ and $S_0, S_1, \ldots$ such that:
\begin{itemize}
	\item each $F_i$ is a finitely generated set of formulas such that $\cup F_i = \Form^{\mc{M}}$,
	\item each $S_i$ is a regular full satisfaction class and $(\mc{M}, S_i)$ is recursively saturated,
	\item $S_{i+1} \restriction F_i = S_i \restriction F_i$, and
	\item for each $\phi \in D \cap F_i$, $(\phi, \emptyset) \in S_i$ if and only if $\phi \in A$.
\end{itemize}
Given such a sequence, $S = \cup (S_i \cap F_i\times M)$ would be the required full satisfaction class on $\mc{M}$.

Externally fix an enumeration of $\Form^{\mc{M}}$ in order type $\omega$. We can assume, without loss of generality, that $\theta$ appears first in this enumeration. We let $F_0$ be $\{\theta\}$ and $S_0$ be any regular full satisfaction class which satisfies internal induction. Let $F_{i+1}$ be generated by $F_i$ and the least $x \in \Form^{\mc{M}} \setminus F_i$ in the aforementioned enumeration. Let $F^\prime = F_i \cup (F_{i+1} \cap D)$. Let $a$ be a sequence such that $\{ F((a)_n) : n \in \omega \} = F_{i+1} \cap D$. Note that such a sequence exists since $F_{i+1}$ is finitely generated. Let $c$ be as in the definition of separability for $a$.

We shall now show how to construct an elementary extension $\mathcal{N}$ of $\mathcal{M}$ and a full regular satisfaction class $S'$ on $\mathcal{N}$ such that
\begin{itemize}
    \item $S'\cap F_i\times M=S_i\cap (F_i\times M)$.
    \item For each $n\in\omega$, $(F((a)_n), \emptyset)\in S'\iff \mc{M}\models n\in c$.
\end{itemize}
By a straightforward resplendence argument one can then copy $S'$ to $\mc{M}$. This last step crucially uses the fact that $(\mc{M}, S_i)$ is recursively saturated and the facts that (1) $F_{i+1}$ is finitely generated and (2) that we can code the membership in $F_{i+1}\cap X$ via the parameter $c$. The construction of $S'$ follows the lines of a standard Enayat--Visser construction (as presented in \cite{enayat-visser}): we build a sequence of models $\mc{M} = \mc{M}_0\preceq\mc{M}_1\preceq \mc{M}_2,\ldots$ and sets $S'_1, S'_2,\ldots$ such that
\begin{enumerate}
    \item $S'_i\subseteq M_i^2$, $S_i\cap (F_i\times M) = S'_1\cap (F_i\times M)$ and for all $i>0$, $S'_{i+1}\cap M_{i-1}^2 = S'_i\cap M_{i-1}$
    \item $(\mc{M}_{i+1}, S'_{i+1})\models\csm\restr{\Form^{\mc{M}_i}}$;
    \item for each $n\in\omega$, $(F((a)_n), \emptyset)\in S'_i\iff \mc{M}\models n\in c$
    \item for every $\phi\in \Form^{\mc{M}_i}$ and $\alpha \in M_{i+1}$, if $\alpha$ is an assignment for $\phi$, then
    \[(\phi,\alpha)\in S'_{i+1}\iff (\widehat{\phi},\widehat{\alpha}_{\phi})\in S'_{i+1}.\]
\end{enumerate}
Then one easily checks that for $\mc{N} = \bigcup_{i}\mc{M}_i$ and $S' = \bigcup_i S'_{i+1}\cap (\Form^{\mc{M}_i}\times M_{i+1})$, $(\mc{N},S')$ satisfy the conditions A,B,C above and  $S'$ is a full regular satisfaction class. We note that condition $4$ does not contradict the fact that $S'_{i+1}$ is defined only for formulae in $M_{i}$, because the operations $\widehat{\cdot}$ are $\mc{L}_{\PA}$ definable, so if $\phi$ in $M_{i}$ then $\widehat{\phi}\in M_i.$ 

We show how to construct $\mc{M}_1$ and $S'_1$ and the rest of cases is fully analogous (but simpler because we do not have to care about condition (3) from the above list). Consider the theory in the language $\mc{L}_{\mc{M}}\cup \{S_1'\}$ which is given as the union of the following sets:
\begin{itemize}
    \item[i] ElDiag($\mc{M}$)
    \item[ii] $\{\textnormal{Comp}(\phi,\psi,\theta)\ \ : \phi,\psi,\theta\in \Form^{\mc{M}}\}$ 
    \item[iii] $\{\forall \alpha \bigl(S_1'(\phi,\alpha)\equiv S_1'(\psi,\widehat{\alpha}_{\phi})\bigr)\ \ : \ \  \phi,\psi\in\Form^{\mc{M}}, \psi = \widehat{\phi}\}$.
    \item[iv] $\{S'_1(F((a)_n),\emptyset)\equiv n\in c\ \ : n\in\omega\}$.
    \item[v] $\{S'_1(\phi,\alpha) \ \ : \phi\in F_i, (\phi,\alpha)\in S_i\}$
\end{itemize}
We argue that the above theory is consistent, which is enough to obtain $\mc{M}_1$ and $S_1$. So fix $A$ - a finite portion of the above theory. Let $B$ consists of all $\phi\in \Form^{\mc{M}}$ which occur in one of the axioms in $A.$ 
We build the extension of $S_1'\subset M^2$ such that $(\mc{M}, S_1')\models A$ by induction on the complexity of $\phi\in B$. We note that this is meaningful, since $B$ is finite. Moreover we always define $S_1'$ on $\widehat{\phi}$ and then extend $S_1'$ canonically to all formulae in $\sim$ equivalence class. In the construction we shall not refer to the fragment of $X$ given by $c$ and $a$, but rather to the whole of $X$. $c$ and $a$ were introduced to enable the resplendency argument.

Assume $\phi$ has the least possible complexity among formulae in $B$. We put $(\widehat{\phi},\alpha)\in S_1'$ iff  $\alpha$ is an assignment for $\widehat{\phi}$ and one of the following holds:
\begin{itemize}
    \item[a.] $\widehat{\phi}$ is standard and $\mathcal{M},\alpha\models \widehat{\phi}$.
    \item[b.] $(\widehat{\phi},\alpha)\in S_i$ and $\phi\in F_i.$
    \item[c.] $\alpha$ is a constant function, $\phi$ is an $F$-intermediate formula and $\alpha$ $X$-satisfies $\widehat{\phi}$. 
\end{itemize}
Then, for every formula $\psi\in B$ which has the least possible complexity, we put $(\psi,\alpha)\in S_1'$ iff $(\widehat{\psi},\widehat{\alpha}_{\psi})\in S_1'.$ The base step of our induction process is finished.

Now for $\psi\in B$ we assume that for every $\phi\in B$ of complexity lower than the complexity of $\psi$ and every $\psi'$ such that $\psi'\sim \psi$, $S_1'$ has been defined. If all immediate subformulae of $\psi$ are in $B$, then by induction we can assume that $S_1'$ is defined for their templates and so we can extend $S_1'$ to $\widehat{\psi}$ using the compositional clauses. Otherwise,  we put $(\widehat{\psi},\alpha)\in S_1'$ iff $\alpha$ is an assignment for $\widehat{\psi}$ and one of the conditions a,b,c above holds. This concludes the inductive step.

It is left to be checked that so defined $S_1'$ satisfies the chosen finite $A$ of the theory. Conditions i,ii,iii and v follow easily by construction. To verify iv we first observe that for every $x$, every subformula $\psi$ of $F(x)$ is a sentence, $\emptyset$ is the unique assignment for $\psi$ and $\widehat{\emptyset}_{\psi}$ is constant. By induction on the complexity of $\phi\in B$ we check that whenever $\phi$ is an $F$-intermediate formula, then 
\begin{equation}\label{equat_ind_cond_reg}\tag{$*$}
   (\phi,\emptyset)\in S_1' \iff \widehat{\phi} \textnormal{ is $X$-satisfied by } \widehat{\emptyset}_{\phi}.
\end{equation}

This is clearly the case for formulae of minimal complexity. We consider the induction step for $\phi = \psi_0 \vee \psi_1$. If it is not  the case that both $\psi_0,\psi_1$ are in $B$, then the claim follows by definition. So assume $\psi_0$ and $\psi_1$ are both in $B$. Hence
\[(\phi,\emptyset)\in S_1'\iff (\psi_0,\emptyset)\in S_1' \textnormal{ or } (\psi_1,\emptyset)\in S_1'.\]
By the inductive assumption, the last condition is equivalent to: 
\begin{equation}\label{equat_ind_2}\tag{$**$}
    \widehat{\emptyset}_{\psi_0} X-\textnormal{satisfies } \widehat{\psi_0} \textnormal{ or } \widehat{\emptyset}_{\psi_1} X-\textnormal{satisfies }\widehat{\psi_1}.
\end{equation} 
Let $\kappa^0$ be the occurrence of $\psi_0$ in $\phi$ as the left disjunct, and $\kappa^1$ be the occurrence of $\psi_1$ in $\phi$ as the right disjunct. Then $(\kappa^0)_{\widehat{\phi}}$ differs from $\widehat{\psi}$ only up to the bound variables renaming and a permutation of free variables. Let $\sigma$ be the permutation of free variables such that $\sigma[(\kappa^{0})_{\widehat{\phi}}]$ is (up to bounded variables renaming) the same as $\widehat{\psi_0}$. By unraveling the definitions it follows that $\widehat{\emptyset}_{\phi}\restr{(\kappa^{0})_{\widehat{\phi}}} = \widehat{\emptyset}_{\psi_0}\circ \sigma$. The same holds for the pair $(\kappa^{1})_{\widehat{\phi}}$ and $\widehat{\psi_1}$. So we conclude that  \eqref{equat_ind_2} is equivalent to 
\[\widehat{\emptyset}_{\phi}\restr{(\kappa^{0})_{\widehat{\phi}}} X-\textnormal{satisfies }(\kappa^{0})_{\widehat{\phi}} \textnormal{ or } \widehat{\emptyset}_{\phi}\restr{(\kappa^{1})_{\widehat{\phi}}} X-\textnormal{satisfies }(\kappa^{1})_{\widehat{\phi}}.\]
The above however is clearly equivalent to the right-hand side of \eqref{equat_ind_cond_reg}.
\end{proof}

\bibliographystyle{plain}
\bibliography{refs}

\end{document}